\tikzset{snake it/.style={decorate, decoration=snake}}
\definecolor{DarkDesaturatedBlue}{HTML}{3A3556}
\definecolor{VividOrange}{HTML}{F15918}
\definecolor{PureOrange}{HTML}{FFBA00}
\definecolor{LightGrayishPink}{HTML}{EEC5D5}
\definecolor{VerySoftBlue}{HTML}{B5AFDB}
\newcommand{\triple}[7]{
	
	\ifx\relax#4\relax
	\def\qoffs{0pt}
	\else
	\def\qoffs{#4}
	\fi
	
	\def\qhedge{
		($#1+#3!\qoffs!-90:#2-#3$) --
		($#2+#1!\qoffs!-90:#3-#1$) --
		($#3+#2!\qoffs!-90:#1-#2$) -- cycle}
	
	\coordinate (12) at ($#1!\qoffs!90:#2$);
	\coordinate (13) at ($#1!\qoffs!-90:#3$);
	\coordinate (23) at ($#2!\qoffs!90:#3$);
	\coordinate (21) at ($#2!\qoffs!-90:#1$);
	\coordinate (31) at ($#3!\qoffs!90:#1$);
	\coordinate (32) at ($#3!\qoffs!-90:#2$);
	
	\def\nqhedge{
		(13) let \p1=($(13)-#1$), \p2=($(12)-#1$) in
		arc[start angle={atan2(\y1,\x1)}, delta angle={atan2(\y2,\x2)-atan2(\y1,\x1)-360*(atan2(\y2,\x2)-atan2(\y1,\x1)>0)}, x radius=\qoffs, y radius=\qoffs] --
		(21) let \p1=($(21)-#2$), \p2=($(23)-#2$) in
		arc[start angle={atan2(\y1,\x1)}, delta angle={atan2(\y2,\x2)-atan2(\y1,\x1)-360*(atan2(\y2,\x2)-atan2(\y1,\x1)>0)}, x radius=\qoffs, y radius=\qoffs] --
		(32) let \p1=($(32)-#3$), \p2=($(31)-#3$) in
		arc[start angle={atan2(\y1,\x1)}, delta angle={atan2(\y2,\x2)-atan2(\y1,\x1)-360*(atan2(\y2,\x2)-atan2(\y1,\x1)>0)}, x radius=\qoffs, y radius=\qoffs] --
		cycle}
	
	\ifx\relax#5\relax
	\def\qlwidth{1pt}
	\else
	\def\qlwidth{#5}
	\fi
	
	\ifx\relax#7\relax
	\fill \nqhedge;
	\else
	\fill[#7]\nqhedge;
	\fi
	
	\ifx\relax#6\relax
	\draw[line width=\qlwidth,rounded corners=\qoffs]\nqhedge;
	\else
	\draw[line width=\qlwidth,#6]\nqhedge;
	\fi
}
\theoremstyle{plain}
\newtheorem{theorem}{Theorem}[section]
\crefname{theorem}{Theorem}{Theorems}
\newtheorem{proposition}[theorem]{Proposition}
\crefname{proposition}{Proposition}{Propositions}
\newtheorem{corollary}[theorem]{Corollary}
\crefname{corollary}{Corollary}{Corollaries}
\newtheorem{lemma}[theorem]{Lemma}
\crefname{lemma}{Lemma}{Lemmas}
\crefname{conjecture}{Conjecture}{Conjectures}
\crefname{problem}{Problem}{Problem}
\crefname{claim}{Claim}{Claims}
\crefname{observation}{Observation}{Observations}
\crefname{setup}{Setup}{Setups}
\crefname{myth}{Myth}{Myths}
\crefname{fact}{Fact}{Facts}
\crefname{algorithm}{Algorithm}{Algorithms}
\crefname{example}{Example}{Examples}
\theoremstyle{definition}
\newtheorem{definition}[theorem]{Definition}
\crefname{definition}{Definition}{Definitions}
\crefname{construction}{Construction}{Constructions}
\crefname{question}{Question}{Questions}
\newtheorem{remark}[theorem]{Remark}
\crefname{remark}{Remark}{Remarks}
\numberwithin{equation}{section}
\setlist[enumerate,1]{label={\upshape (\roman*)}}
\def\eps{\varepsilon}
\DeclareMathOperator{\probability}{Pr}
\DeclareMathOperator{\expectation}{\mathbf{E}}
\DeclareMathOperator{\treewidth}{tw}
\DeclareMathOperator{\orderedshadow}{\partial^\circ}
\newcommand{\sta}[1]{\mathop{\mkern 0mu\mathrm{sta}}\nolimits(#1)}
\newcommand{\ter}[1]{\mathop{\mkern 0mu\mathrm{ter}}\nolimits(#1)}
\renewcommand{\int}[1]{\mathop{\mkern 0mu\mathrm{int}}\nolimits(#1)}
\newcommand{\clique}[2]{\smash{K_{#2}^{#1}}}
\begin{document}

\begin{frontmatter}[classification=text]

\title{Towards a  Hypergraph version of the P\'osa--Seymour Conjecture} 

\author[mps]{Mat\'ias Pavez-Sign\'e\thanks{Supported by the European Research Council grant 947978 under
		the European Union's Horizon 2020 research and innovation programme, and by a CMM postdoctoral scholarship funded by ANID PIA AFB170001 while he was at the Universidad de Chile.}}
\author[nsm]{Nicol\'as Sanhueza-Matamala\thanks{Supported by the Czech Science Foundation, grant number GA19-08740S with institutional support RVO: 67985807 while he was affiliated with the Institute of Computer Science of the Czech Academy of Sciences; and by ANID-Chile through the FONDECYT Iniciación Nº11220269 grant.}}
\author[ms]{Maya Stein\thanks{Supported by FONDECYT Regular Grant 1221905, by MathAmSud MATH190013, by FAPESP-ANID Investigaci\'on Conjunta grant 2019/13364-7, and by ANID PIA CMM FB210005.}}

\begin{abstract}
We prove that for fixed $r\ge k\ge 2$, every $k$-uniform hypergraph on $n$ vertices having minimum codegree at least $(1-(\binom{r-1}{k-1}+\binom{r-2}{k-2})^{-1})n+o(n)$
contains the $(r-k+1)$th power of a tight Hamilton cycle.
This result may be seen as a step towards a hypergraph version of the P\'osa--Seymour conjecture.

Moreover, we prove that the same bound on the codegree suffices for finding a copy of every spanning hypergraph of tree-width less than $r$ which admits a tree decomposition where every vertex is in a bounded number of bags.
\end{abstract}
\end{frontmatter}

\section{Introduction}

\subsection{Powers of tight Hamilton cycles}
A central problem in extremal graph theory is the study of degree conditions that force a graph $G$ to contain a copy of some large or even spanning subgraph. A classical result in this area is Dirac's theorem~\cite{D1952} from the 1950's which states that every graph $G$ on $n\ge 3$ vertices with minimum degree at least $ n/2$ contains a Hamilton cycle, i.e.~a cycle that passes through all vertices of $G$.

Over the decades to follow, there have been numerous efforts to extend Dirac's theorem to other spanning subgraphs. One of the most important extensions concerns \emph{$r$th powers of Hamilton cycles} (i.e.~Hamilton cycles with an additional chord for each pair of vertices at distance at most $r$).
P\'osa (for $r=3$, see~\cite{problem9}) and Seymour (for $r > 3$~\cite{seymour73}) conjectured that for all $r \geq 3$ and all $n \geq r$,
every $n$-vertex graph $G$ with $\delta(G) \geq (r-1)n/r$ must contain the $(r-1)$th power of a Hamilton cycle. This  conjecture has been confirmed
for every $r \geq 3$ and for all large $n \geq n_0(r)$ by Koml\'os, S\'ark\"ozy and Szemer\'edi~\cite{KSS1998b}.

Many extremal results for graphs have found extensions to hypergraphs, in particular to $k$-uniform hypergraphs, which we will call {\em $k$-graphs} for short.  In a $k$-graph $H$, the minimum degree condition is often translated to a condition on the \emph{minimum codegree} $\delta(H)$, which is  defined as the maximum number $m$ such that each set of $k-1$ vertices lies in at least $m$ edges of $H$.
For $j \ge k\geq 1$, the \emph{$j$th power of a $k$-cycle} on $n$ vertices, denoted by $C^{j}_{k, n}$, is the $k$-graph on $n$ cyclically ordered vertices that contains every $k$-subset of every set of $k+j-1$ consecutive vertices. The $k$-graph $C^1_{k,n}$ is also called a {\em tight Hamilton cycle}.

Extending Dirac's theorem to this setting, 
R\"odl, Ruci\'nski and Szemer\'edi~\cite{RRS2008} showed that any $n$-vertex $k$-graph $H$ with $\delta(H) \geq (1/2 + o(1))n$ contains a tight Hamilton cycle. For powers of tight hypergraph cycles, not much is known in general, but 
Bedenknecht and Reiher~\cite{BR2020} showed recently that any  $3$-graph on $n$ vertices with codegree at least $( 4/5 + o(1))n$ contains a copy of $C^2_{3,n}$.

Our first main result establishes a generalisation of  these results to arbitrary powers of $k$-cycles and arbitrary $k\ge 2$.

\begin{theorem} \label{theorem:main-powcycles}
	For all $r\ge k\ge 2$ and $\alpha > 0$, there exists $n_0$ such that   every $k$-graph $H$ on $n\ge n_0$ vertices with 
	\[ \delta(H) \geq \left(1 - \frac{1}{\binom{r-1}{k-1} + \binom{r-2}{k-2}} + \alpha \right)n. \]
	contains a copy of $C^{r-k+1}_{k,n}$.
\end{theorem}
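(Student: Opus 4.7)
I would deduce Theorem~\ref{theorem:main-powcycles} from the general embedding result for bounded tree-width spanning subgraphs stated in the abstract. Indeed, indexing the vertices of $C^{r-k+1}_{k,n}$ cyclically as $v_1, \ldots, v_n$ and taking bags $B_i = \{v_i, v_{i+1}, \ldots, v_{i+r-1}\}$ of size $r$, every edge of $C^{r-k+1}_{k,n}$---being a $k$-subset of $r$ consecutive vertices---is contained in some $B_i$, consecutive bags share $r-1$ vertices, and each vertex lies in exactly $r$ bags. Organising these bags along a tree (say, a path together with one auxiliary branch accounting for the cyclic wrap-around) gives a valid tree decomposition of width $r-1 < r$ with bounded overlap per vertex, placing $C^{r-k+1}_{k,n}$ within the scope of the general theorem.

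\textbf{Proof of the general theorem.} For the general statement I would use the \emph{absorption method}. Let $T$ be the target hypergraph, equipped with a tree decomposition of width less than $r$ in which each vertex lies in at most a bounded number of bags. The argument splits into three phases. \emph{(i)~Absorber.} Build a sub-hypergraph $A \subseteq H$ on $o(n)$ vertices, together with a partial embedding into $A$ of an initial fragment of $T$, such that for any sufficiently small leftover set $L \subseteq V(H) \setminus V(A)$, the partial embedding extends to one covering $V(A) \cup L$. \emph{(ii)~Almost-spanning cover.} After reserving $V(A)$ and a short connecting reservoir, embed $T$ bag-by-bag into the remaining vertices of $H$, until only $o(n)$ are uncovered. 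Each step introduces one new vertex that must extend the partial embedding across the $r-1$ images of the overlap with the previous bag; the codegree threshold $1 - 1/(\binom{r-1}{k-1} + \binom{r-2}{k-2})$ is tailored to guarantee a linear supply of valid candidates at every such step (matching the extremal lower bound coming from a suitable blow-up construction). \emph{(iii)~Closing.} Connect the partial embedding to $A$ through the reservoir, and invoke the flexibility of $A$ to absorb the last $o(n)$ vertices.

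\textbf{Main obstacle.} The hardest ingredient will be the absorber construction in phase~(i), since the absorbing structure must be flexible enough to swap in any leftover vertex of $H$ while respecting the rigid $(r-1)$-wise overlap imposed by the tree decomposition. My plan is to first exhibit constant-sized \emph{local absorbers}---fixed templates inside which one distinguished slot can be filled by any vertex of $H$---and to use a probabilistic argument together with the codegree hypothesis to show that every vertex of $H$ appears as the swappable slot in linearly many local absorbers. A second ``concatenation'' step then combines many disjoint local absorbers into a single global absorber via the same extension mechanism as in phase~(ii). Once the absorber and a robust extension/connecting lemma at the extremal threshold are in place, the almost-spanning cover and closing phases should be comparatively routine.
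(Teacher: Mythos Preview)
Your reduction in the first paragraph fails: the $(r-k+1)$th power of a tight Hamilton \emph{cycle} does \emph{not} have tree-width $<r$, so Theorem~\ref{theorem:main-treewidth} cannot be applied to it directly. Concretely, an $r$-tree on $n$ vertices has exactly $n-r+1$ edges, so there is no room for any ``auxiliary branch'' on the same vertex set; the only candidate $r$-tree whose edges are windows of $r$ consecutive vertices is the tight $r$-path $W_1,\dotsc,W_{n-r+1}$, and this misses all the wrap-around $k$-sets such as $\{v_n,v_1,\dotsc,v_{k-1}\}$. Equivalently, $\partial_2(C^{r-k+1}_{k,n})=C_n^{r-1}$, and the graph power $C_n^{r-1}$ has tree-width strictly larger than $r-1$ once $n$ is large (already $C_n$ has tree-width $2$, not $1$, and $C_n^2$ contains a $K_4$-minor, so has tree-width at least $3$). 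Your proposed bag structure with all $n$ cyclic windows along a path also violates the connectedness axiom of a tree decomposition: the vertex $v_1$ lies in $W_1$ and in $W_{n-r+2},\dotsc,W_n$, which are at opposite ends of the path.

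This is precisely why the paper does \emph{not} deduce Theorem~\ref{theorem:main-powcycles} directly from Theorem~\ref{theorem:main-treewidth}. Instead, it invokes Theorem~\ref{theorem:main-treewidth} only to obtain an almost-spanning power of a tight Hamilton \emph{path} (which genuinely admits an $r$-tree of bounded degree), and handles the cyclic closure and the leftover vertices with a separate, cycle-specific absorbing mechanism: for every $v$ one finds many ``$v$-path-absorbing'' $(2r-2)$-tuples, links them into a short absorbing path via the connecting lemma, embeds the long path in the remainder using Theorem~\ref{theorem:main-treewidth}, connects the two paths through a reservoir into an almost-spanning power of a cycle, and then absorbs the last $o(n)$ vertices one at a time. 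Your phases (i)--(iii) are in the right spirit, but to make them work for Theorem~\ref{theorem:main-powcycles} you must build the absorber for the cycle directly (as the paper does), rather than hoping it comes for free from the tree-width theorem.
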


Let us compare this result with what one can get from previously established results.
Given $d < k$ and a $k$-graph $H$, the \emph{minimum $d$-degree} $\delta_d(H)$ is the maximum number $m$ such that each set of $d$ vertices lies in at least $m$ edges of $H$.
Lang and the second author~\cite{LangSanhuezaMatamala2022} showed that $n$-vertex $k$-graphs $H$ with $\delta_d(H) \geq (1 - 1/(2(k-d)) + o(1)) \binom{n}{k-d}$ contain tight Hamilton cycles.
By applying this result to the $r$-uniform clique graph (it will be defined in \Cref{def:clique-graph}) and $d = k-1$,
a short computation yields that $n$-vertex $k$-graphs with $\delta(H) \geq (1 - 1/(2(r-k+1) \binom{r}{k}) + o(1))n$ contain a copy of $C^{r-k+1}_{k,n}$.
\cref{theorem:main-powcycles} yields the same outcome with a substantially lower minimum codegree condition.

For $k = 2$, and any $r\ge 2$, \cref{theorem:main-powcycles} 
gives an asymptotic version of the P\'osa--Sey\-mour conjecture, as shown by Koml\'os, S\'ark\"ozy and Szemer\'edi~\cite{KSS1998} before their proof of the exact version in~\cite{KSS1998b}. For $r = k\ge 2$, the degree condition of \cref{theorem:main-powcycles}  becomes $\delta(H) \geq (1/2 + o(1))n$, and we recover the R\"odl, Ruci\'nski and Szemer\'edi's Dirac-type threshold for tight Hamilton cycles~\cite{RRS2008}. For $r=k+1\ge 3$,  \cref{theorem:main-powcycles} yields the following corollary.
\begin{corollary}For all $k\ge 2$ and $\alpha>0$, there exists $n_0$ such that every $k$-graph $H$ on $n\ge n_0$ vertices with $\delta(H)\ge(1-1/({2k-1}) + \alpha)n$ contains the square (i.e.~second power) of a tight Hamilton cycle.
\end{corollary}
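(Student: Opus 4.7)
The plan is to deduce this corollary as an immediate specialisation of \cref{theorem:main-powcycles}. Given fixed $k \ge 2$ and $\alpha > 0$, I would invoke \cref{theorem:main-powcycles} with the parameter $r$ set to $k+1$. Since $r = k+1 \ge k \ge 2$, the hypothesis on $r$ is satisfied, and the spanning structure produced is $C^{r-k+1}_{k,n} = C^{2}_{k,n}$, which is by definition the square of a tight Hamilton cycle on $n$ vertices.

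The only thing left to check is that the codegree threshold in \cref{theorem:main-powcycles} collapses, under $r = k+1$, to the one stated here. The denominator appearing in that threshold becomes
\[ \binom{r-1}{k-1} + \binom{r-2}{k-2} = \binom{k}{k-1} + \binom{k-1}{k-2} = k + (k-1) = 2k - 1, \]
so the hypothesis $\delta(H) \ge (1 - 1/(2k-1) + \alpha)n$ is exactly what \cref{theorem:main-powcycles} demands in this regime. Picking the same $n_0$ as supplied by \cref{theorem:main-powcycles} then gives the corollary.

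There is no genuine obstacle: all of the mathematical content is carried by \cref{theorem:main-powcycles}, and the corollary amounts to substituting a specific value of $r$ and simplifying a sum of two binomial coefficients. No auxiliary lemmas, no separate construction, and no sharper degree analysis are required.
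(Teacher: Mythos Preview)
Your proposal is correct and matches the paper's approach exactly: the corollary is stated immediately after \cref{theorem:main-powcycles} as the special case $r=k+1$, with no separate proof given beyond the observation that this substitution yields the claimed codegree bound. Your verification that $\binom{k}{k-1}+\binom{k-1}{k-2}=2k-1$ is precisely the simplification the paper leaves implicit.
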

For $k=3$, this equals the above-cited result by Bedenknecht and Reiher~\cite{BR2020}.
As in their proof, we will also rely on `absorption' techniques combined with `connection' steps. For an  outline of our proof see \Cref{section:proofoverview}.

\subsection{Hypergraphs of bounded tree-width}
With the same conditions as in \cref{theorem:main-powcycles}, we can also embed a larger family of $k$-graphs, namely those of sufficiently bounded \emph{tree-width}, which we define now. For a hypergraph $H$ and $j \geq 1$, the \emph{$j$th shadow of $H$} is the $j$-graph $\partial_j(H)$ on $V(H)$ where a $j$-set $X$ is an edge if and only if it is contained in some edge of $H$.
An $r$-uniform graph $T$ is an \emph{$r$-tree} if
\begin{enumerate}
	\item $T$ consists of a single edge, or
	\item there exists an $r$-tree $T'$,
	a set $X \in \partial_{r-1}(T')$,
	and a vertex $v \notin V(T')$ such that
	$T$ is obtained from $T'$ by adding $v$ and the edge $X \cup \{v\}$.
\end{enumerate}
The tree-width of a graph is a parameter measuring how closely the graph resembles  a tree.
One of the usual definitions of the tree-width
uses the notion of `partial $k$-trees', which is equivalent to  defining  the \emph{tree-width} $\treewidth(G)$ of a graph $G$ 
as the minimum $r$ such that there is an $(r+1)$-tree $T$ with $G \subseteq \partial_2(T)$
(see, e.g.~\cite[Theorem 35]{B1998}, for this and  equivalent definitions of tree-width).
For $k\ge 3$, define the \emph{tree-width $\treewidth(G)$} of a $k$-graph $G$ as  the minimum $r$ such that there is an $(r+1)$-tree $T$ with $G \subseteq \partial_k(T)$, in which case we  say that $G$ \emph{admits}~$T$.

With these definitions at hand, we can state our second main result.

\begin{theorem} \label{theorem:main-treewidth}
	For any $r\ge k\ge 2$,  $\Delta > 0$ and $\alpha > 0$, there exists $n_0$ such that the following holds for all $n \geq n_0$.
	Let $G$ be an $n$-vertex $k$-graph 
	with $\treewidth(G)<r$,
	which admits an $r$-tree $T$ with $\Delta_1(T) \leq \Delta$, and let $H$ be an $n$-vertex $k$-graph  with 
	\[ \delta(H) \geq \left(1 - \frac{1}{\binom{r-1}{k-1} + \binom{r-2}{k-2}} + \alpha \right)n. \]
	Then $G \subseteq H$.
\end{theorem}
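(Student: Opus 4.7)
The plan is to prove \cref{theorem:main-treewidth} by the absorption method. Since the $(r-k+1)$-th power of a tight Hamilton cycle $C^{r-k+1}_{k,n}$ admits a natural path-like $r$-tree with $\Delta_1 \leq r$, this argument simultaneously establishes \cref{theorem:main-powcycles}. The first step is to exploit the $r$-tree structure of $G$: fix an $r$-tree $T$ admitting $G$ with $\Delta_1(T) \leq \Delta$, and list $V(G) = \{v_1,\ldots,v_n\}$ in the inductive construction order of $T$, so that $\{v_1,\ldots,v_r\}$ is the initial edge and each $v_i$ (for $i>r$) attaches to an $(r-1)$-subset $P_i$ of previously listed vertices (its \emph{parent bag}). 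The hypothesis $\Delta_1(T) \leq \Delta$ bounds the number of edges of $T$ at any vertex by $\Delta$, so the local structure around every $v_i$ has constant complexity, depending only on~$\Delta$ and $r$.

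With this ordering fixed, the proof splits into three stages: (i) reserve a small random absorbing set $A \subseteq V(H)$; (ii) embed all but a tiny fraction of $G$ into $V(H)\setminus A$ using regularity; and (iii) complete the embedding by using absorbers in $A$. For (ii), apply a codegree regularity lemma for $k$-graphs (of R\"odl--Skokan type) to $H - A$, producing a cluster hypergraph $R$ whose codegree inherits the bound of $H$ up to $\alpha/2$. The core extremal claim is that the codegree threshold $1 - 1/(\binom{r-1}{k-1}+\binom{r-2}{k-2})$ is precisely what forces $R$ to contain a long "$r$-tree skeleton" of regular $K_r^{(k)}$-cliques sharing $(r-1)$-sets in a controlled way. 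The vertices $v_{r+1},\ldots,v_n$ can then be embedded greedily in the fixed order: when embedding $v_i$, the at most $\Delta$ constraints coming from the edges of $T$ incident to $v_i$ restrict its valid images to a positive-density subset of a prescribed cluster, and a standard extension/embedding lemma for regular hypergraphs guarantees a valid choice throughout.

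The heart of the argument, and the \textbf{main obstacle}, is the construction of sufficiently many flexible absorbers in stage (i). For the pure Hamilton cycle problem, absorbers take the familiar form of short "detour" paths; here, we need gadgets that can absorb any leftover vertex $x \in V(H)$ as the image of \emph{any} uncovered $v_i$, whose local structure in $G$ may vary. The bounded $\Delta_1(T)$ hypothesis is crucial, as it means each vertex's local structure comes from a bounded family of types, so the absorbers can be parametrised by type. Finding many absorbers of each type inside a small random set $A$ proceeds via a Montgomery-style iterative absorption (adapted to hypergraphs), leveraging the codegree condition to supply the connectivity needed to stitch gadgets together. The main quantitative task is to verify that the chosen codegree threshold is simultaneously sufficient for both the structural claim in (ii) (finding the $r$-tree skeleton in $R$) and for producing the abundance of absorbers required in (iii); this is where the specific combinatorial meaning of $\binom{r-1}{k-1}+\binom{r-2}{k-2}$ enters, matching the extremal density of the natural lower-bound construction.
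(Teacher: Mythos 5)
Your high-level architecture (regularity plus absorption) is in the same family as the paper's, but the sketch has several genuine gaps at exactly the points where the real work lies. First, in stage (ii) you assert that the reduced $k$-graph $R$ ``inherits the bound of $H$ up to $\alpha/2$''. It does not: the presence of irregular $k$-tuples destroys the codegree condition in $R$, and this is one of the central obstacles the paper must overcome. The paper's workaround is to pass to $R\cup I$ (which does inherit the codegree), prove the clique-tiling threshold $t_{k-1}(\clique{k}{r})\le 1-f_k(r)$ via the Keevash--Mycroft framework (ruling out divisibility barriers using \cref{lemma:crucialbound}), and then avoid the irregular edges by randomly colouring them and invoking a rainbow-factor theorem. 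None of this is present in your proposal, and without it the ``structural claim'' about an ``$r$-tree skeleton'' in $R$ is unsupported. Second, a greedy vertex-by-vertex embedding of an almost-spanning tree into a regular partition fails on space grounds: you must globally allocate the pieces of $T$ to clusters so that no cluster overfills, which is why the paper finds a \emph{uniformly dense clique matching} in $K_r(H)$, cuts $T$ into boundedly many subtrees, and uses the connecting lemma (\cref{lemma:connectivity}) to route between matching edges. A ``standard extension lemma'' does not substitute for this allocation step.

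Two further points. Your absorbers are not actually constructed: the paper's gadgets are absorbing $X$-tuples for a specific $(r-1)$-tree $X$ derived from a small subtree of $T$, their abundance rests on \cref{corollary:extendtwovertices} together with a supersaturation argument in the link $(r-1)$-graph, and---crucially---the gadgets must be \emph{$X$-covered by the partial embedding itself} (\cref{lemma:finishingembedding}); saying they are ``parametrised by type'' and invoking iterative absorption does not supply any of this. Finally, your opening claim that \cref{theorem:main-treewidth} simultaneously yields \cref{theorem:main-powcycles} is false as stated: $C^{r-k+1}_{k,n}$ is \emph{not} admitted by an $r$-tree (already for $k=r=2$ a $2$-tree is a tree and cannot shadow a Hamilton cycle), only the corresponding power of a tight \emph{path} is. The paper's deduction of \cref{theorem:main-powcycles} therefore needs the extra machinery of $v$-path-absorbing tuples and \cref{lemma:connectivity:cycles} to close the cycle. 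Relatedly, the claim that the threshold ``matches the extremal density of the natural lower-bound construction'' contradicts the paper, which explicitly leaves tightness open for general $r,k$.
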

For $r\ge k= 2$,
the codegree condition of
\cref{theorem:main-treewidth} is $\delta(H) \geq (1-1/r + o(1))n$, and the 
graph $G$ needs to  admit an $r$-tree $T$ of bounded degree.
It is not difficult to see that any such  $G$ has bounded-degree and  is $(r-1)$-degenerate, and
thus has  chromatic number at most $r$.
Also, graphs of bounded degree and tree-width have sublinear bandwidth~\cite{BPTW2010},
so our result recovers a particular instance of the Bandwidth Theorem by B\"ottcher, Schacht and Taraz~\cite{BST2009}.

For $r=k\ge 2$, the codegree condition of
\cref{theorem:main-treewidth} becomes $\delta(H) \geq ( 1/2 + o(1))n$, and the $k$-graph $G$ needs to admit a $k$-tree $T$ of bounded degree, which is equivalent to $G$ itself being a bounded-degree $k$-tree.
For $r = k = 2$ this is a result by Koml\'os, S\'ark\"ozy and Szemer\'edi~\cite{KSS1995} on embedding spanning bounded-degree trees, and for $r = k > 2$ we recover a previous result by the authors~\cite{PSS2020} on the embedding of spanning bounded-degree $k$-trees.

\subsection{Lower bounds and clique tilings} \label{section:lowerbounds}

Most of the results implied by our two theorems are asymptotically tight, that is, the codegree conditions in~\cite{BST2009,KSS1995, KSS1998b, PSS2020, RRS2008}  are best possible apart from the $o(n)$ term.
For this reason, also  
\cref{theorem:main-powcycles} and \cref{theorem:main-treewidth} are asymptotically tight for some choices of $k$ and~$r$, e.g.~when $r=k$ or $k = 2$.
We do not know if our 
results are best possible in general.

We remark that  lower bounds for the codegree  in our results can  be inferred from bounds on {\it clique tiling thresholds}.
A \emph{$\clique{k}{r}$-clique tiling} is a collection of vertex-disjoint cliques of order~$r$ in a $k$-graph.
Note that $C^{r-k+1}_{k,n}$ contains a $\clique{k}{r}$-clique tiling if $r$ divides $n$,
and the tree-width of a $\clique{k}{r}$-clique is less than $r$.
This shows that, apart from the $o(n)$ term, the codegree condition in \cref{theorem:main-powcycles,theorem:main-treewidth} could not be lower than the  
codegree threshold $t_{k-1}(\clique{k}{r}, n)$ for finding $\clique{k}{r}$-clique tilings 
in $n$-vertex $k$-graphs (precise definitions are in Section \ref{section:cliquefactors}).

From this connection, our results give upper bounds on $t(\clique{k}{r}, n)$,
in particular showing that $t_{k-1}(\clique{k}{r}) \leq 1 - \left( \binom{r-1}{k-1} + \binom{r-2}{k-2}\right)^{-1}$ holds for all $r \geq k \geq 3$.
For some values of $k, r$ this bound seems to be new, but we emphasize that the better bound $t_{k-1}(\clique{k}{r}, n) \leq 1 - \binom{r-1}{k-1}^{-1}$ was shown by Lo and Markström~\cite{LM2015} to hold for some values of $r, k$, in particular for all $r$ sufficiently large in terms of $k$.
We discuss tiling thresholds with detail in Section \ref{section:cliquefactors}.

Lower bounds on $t(\clique{k}{r}, n)$ can be used as a benchmark for our results, but little is known about these thresholds in general.
Asymptotics~\cite{P2008, LM2015} for $t(\clique{3}{4}, n)$ (and exact values for large $n$~\cite{KM2015})  imply that the term $ 4/5$ in the result from~\cite{BR2020}, and in our  \cref{theorem:main-powcycles} for $(r,k) = (4,3)$ could at best be improved to $ 3/4$.
For $k$ fixed and $r$ large, the main constant in our bounds in \cref{theorem:main-powcycles,theorem:main-treewidth} is of the form $1 - cr^{1-k}$, 	while the clique tiling threshold can be bounded from below by the 
{\it codegree Tur\'an threshold} guaranteeing a single copy of~$\clique{k}{r}$ which in turn has been bounded from below by $1 - c'r^{1-k}\log r$~\cite{LZ2018}, where $c$ and $c'$ are constants (depending on $k$ only).

\subsection{Bounding only the treewidth}
Our result in \cref{theorem:main-treewidth} applies to $k$-graphs $G$ of bounded tree-width with a `hypertree decomposition' of bounded-degree, or equivalently, with the property that every vertex is in a bounded number of bags.
This latter condition is necessary for our approach and it does not follow automatically for any $G$ of bounded maximum degree: there are bounded-degree graphs which do not admit any tree decomposition with bags of optimal size where every vertex is in a bounded number of bags (see, e.g.~\cite[Figure 1]{DO1995}).
However, if we increase the bag sizes  we can obtain a tree decomposition where every vertex is in a bounded number of bags.
We use this fact  to prove a variant of  \cref{theorem:main-treewidth} which applies to arbitrary $k$-graphs of bounded degree and bounded tree-width (see Section~\ref{sec:twvariant}).

\section{Proof overview} \label{section:proofoverview}
Our first main result, \cref{theorem:main-powcycles}, will be deduced as a relatively quick consequence of \cref{theorem:main-treewidth}, so we focus on the proof of the latter result.

Let $G$ be a $k$-graph on $n$ vertices with $\treewidth(G)<r$ that admits an $r$-tree $T$ of bounded vertex-degree, and let $H$ be a $k$-graph satisfying the conditions of \cref{theorem:main-treewidth}.
Instead of working directly with $H$, we will work in the auxiliary \emph{$r$-clique $r$-graph} $K_r(H)$ which captures the $r$-cliques in $H$ (see Section~\ref{def:clique-graph} for a definition).
This is because in order to embed $G$ into $H$, it will be enough to embed $T$ into $K_r(H)$.
We will use a similar strategy as the one used in~\cite{PSS2020}. The difference is that in~\cite{PSS2020}, a codegree condition on a $k$-graph was used to  embed $k$-trees,  while now, wishing to embed an $r$-tree, we do not necessarily have such a condition in the $r$-graph $K_r(H)$.

However, using the minimum codegree condition on the $k$-graph $H$,
we will ensure that $K_r(H)$ satisfies the following three properties, which will be crucial to the proof.

\begin{enumerate}
	\item \label{item:sketch-connections}
	\emph{Connections.}
	Every pair of distinct $(r-1)$-tuples $f,f'\in \partial_{r-1}(K_{r}(H))$ is connected by many tight walks of bounded length.
	
	\item \label{item:sketch-matching}
	\emph{Uniformly dense clique matching.}
	There are a small set $V_0 \subseteq V(H)$ and a partition $\{ V_{j}^i \}_{(i,j)\in [t]\times [r]}$ of $V(H) \setminus V_0$  such for every $i\in [t]$, the $r$-partite graph $K_r(H)[V_{1}^i,\dotsc,V_{r}^i]$ is uniformly dense.
	
	\item \label{item:sketch-absorbers}
	\emph{Absorbers.}
	There is a set $\mathcal A$ of tuples of vertices of $H$ such that for each  $v\in V(H)$, many tuples $\mathbf{v}\in \mathcal A$ can `absorb' $v$.
\end{enumerate}
We embed the absorbers from~\ref{item:sketch-absorbers}, and then
partition $T$ into  a bounded number of small subtrees $T_0, T_1, \dotsc, T_{\ell}$ such that $T_0\cup T_1\cup \dotsb\cup T_i$ is (tightly) connected for each $i\in [t]$. We iteratively embed the trees $T_i$, first finding an element of the  uniformly dense clique matching from~~\ref{item:sketch-matching} having enough unused vertices, and then 
using Property~\ref{item:sketch-connections} to connect the  embedding of $T_0\cup T_1\cup \dotsb\cup T_{i-1}$ with this free space (by embedding the first levels of $T_i$). We then finish by using the absorber, which allow us to increase the embedding of the tree vertex by vertex. 

To carry out the proof as described, we invoke lemmas already present in~\cite{PSS2020}, which allows us to shorten the proof.
The new ingredients here are the connections and the clique matching, whose proofs we describe in more detail now.

We show in Section~\ref{section:connections} that $K_r(H)$ has Property~\ref{item:sketch-connections} via induction on $r$ starting from $k$.
The base case $r = k$ is equivalent to finding bounded-length tight walks between pairs of $(k-1)$-tuples in $k$-graphs with large codegree, and this was already proven in~\cite{PSS2020} (a similar lemma, although with different quantification, was also shown already by R\"odl, Ruci\'nski and Szemer\'edi~\cite[Lemma 2.4]{RRS2008}).
For the induction step, we show here that for $k\le j\le r-1$ and a walk $P$ in $K_j(H)$ of bounded length, the codegree in $H$ allow us to find a bounded number of vertices which together with $P$ define a tight walk in $K_{j+1}(H)$.

To get Property~\ref{item:sketch-matching}, in Section~\ref{section:uniformlydense}, we prove that $K_r(H)$ contains a uniformly dense clique matching using the hypergraph regularity method.
After applying the hypergraph regularity lemma to $H$, it suffices to find an almost spanning $r$-clique matching in the corresponding reduced $k$-graph~$R$.
Unfortunately, the reduced $k$-graph usually does not inherit the codegree conditions of $H$ due to the presence of `irregular' tuples.
We overcome this issue by working in a supergraph $R' \supseteq R$ with the `correct' codegrees, then edge-colouring $R'$ suitably and finding an $r$-clique matching under some colour constraints.
By choosing the edge-colouring appropriately we can make sure that most of the cliques of the found matching are actually contained in $R$.
We combine this with powerful tools to find matchings in hypergraphs by Keevash and Mycroft~\cite{KM2015}.
\section{Tools}
We introduce some notation and results that will be used throughout the paper.
During the paper we will use the common hierarchy notation for quantifiers: the phrase ``$a \ll b$'' in a statement will mean ``for any $b > 0$, there exists $a_0 > 0$ such that for any $0<a \le a_0$, the following holds''.
Sequences of longer chained hierarchies are defined similarly.
If $1/x$ appears in such a hierarchy, we also assume that $x$ is an integer.

\subsection{Ordered shadow}
Given a $k$-graph $H$, its \emph{ordered shadow} $\orderedshadow(H) \subseteq V(H)^{k-1}$ is the set of all tuples of $k-1$ distinct vertices whose (unordered) underlying set belongs to $\partial_{k-1}(H)$. Throughout the paper, we will use bold letters to denote elements from $\partial^\circ(H)$ and for ordered tuples in general. Elements from $\partial^\circ(H)$ will be used as their underlying set for set-theoretical operations. For example, if $\mathbf x=(x_1,\dotsc,x_k),\mathbf y=(y_1,\dotsc,y_k)\in \partial^\circ(H)$, then $\mathbf x\cup\mathbf y=\{x_1,\dotsc,x_k\}\cup\{y_1,\dotsc,y_k\}$.   
\subsection{A function} 
To simplify  notation throughout the paper, we define 
\[f_k(j):=\frac{1}{\binom{j-1}{k-1}+\binom{j-2}{k-2}}\]
for all integers $j\ge k$. Let us observe that $f_k$ is monotone decreasing and $f_k(k)=1/2$.

\subsection{Concentration inequalities} 
During our proofs, it will be useful to have the following standard concentration inequalities at hand (see~\cite[Corollary 2.3 and Corollary 2.4]{JLR2000}).
\begin{lemma} \label{lemma:concentration}
	Let $X$ be a random variable which is the sum of independent $\{0,1\}$-valued random variables. 
	\begin{enumerate}
		\item \label{lemma:happychernoff} If $x \geq 7 \expectation[X]$, then $\probability[X \geq x] \leq \exp(-x)$, and
		\item \label{lemma:sadchernoff} for $0<\eps\le 3/2$, we have
		\[\probability[|X-\expectation[X]|\ge \eps\expectation[X] ]\le \exp(-\eps^2\expectation[X]/3).\]
	\end{enumerate}
\end{lemma}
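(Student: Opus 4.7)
Both bounds are classical Chernoff-type estimates, and I would prove them via the exponential moment method. Write $X = \sum_{i=1}^n X_i$ with $X_i \in \{0,1\}$ independent, $p_i := \probability[X_i = 1]$, and $\mu := \expectation[X] = \sum_i p_i$. The elementary inequality $1 + p_i(e^t-1) \leq \exp(p_i(e^t-1))$ combined with independence yields
\[
\expectation[e^{tX}] = \prod_{i=1}^n\bigl(1 + p_i(e^t-1)\bigr) \leq \exp\bigl(\mu(e^t - 1)\bigr)
\]
for every real $t$. Markov's inequality applied to $e^{tX}$ then gives $\probability[X \geq a] \leq \exp(-ta + \mu(e^t-1))$ for each $t > 0$, and symmetrically for $t < 0$; this will be the workhorse for both parts.

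For part~(i), I would set $t = \ln(x/\mu) > 0$ to optimise the right-hand side, producing $\probability[X \geq x] \leq e^{x-\mu}(\mu/x)^x$. A short calculus check verifies that this is at most $e^{-x}$ whenever $x \geq 7\mu$: equivalently, one needs $\ln(x/\mu) \geq 2 - \mu/x$, which at $x/\mu = 7$ reads $\ln 7 \geq 2 - 1/7$ and only strengthens for larger ratios.

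For part~(ii), I would handle the upper and lower tails separately. The upper tail follows by taking $t = \ln(1+\eps) > 0$, giving
\[
\probability[X \geq (1+\eps)\mu] \leq \exp\bigl(\mu\bigl(\eps - (1+\eps)\ln(1+\eps)\bigr)\bigr),
\]
combined with the analytic inequality $\eps - (1+\eps)\ln(1+\eps) \leq -\eps^2/3$, valid for $0 < \eps \leq 3/2$. The lower tail is symmetric via $t = \ln(1-\eps) < 0$ together with the stronger $-\eps - (1-\eps)\ln(1-\eps) \leq -\eps^2/2$, and combining the two contributions yields the stated two-sided bound.

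The main technical point, really the only nontrivial step, is verifying the two Taylor-type inequalities on the Chernoff exponent in their specified ranges; these are routine calculus exercises. Since both estimates appear in precisely this form as Corollaries~2.3 and~2.4 of Janson--\L uczak--Ruci\'nski~\cite{JLR2000}, in practice one would simply cite them, as the paper does.
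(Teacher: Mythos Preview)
Your proposal is correct: this is exactly the standard exponential-moment derivation of the Chernoff bounds, and the calculus checks you indicate go through. The paper itself does not give any proof of this lemma; it simply cites \cite[Corollaries~2.3 and~2.4]{JLR2000}, precisely as you note in your final sentence. So your proposal supplies strictly more than the paper does, and by the same (standard) route.
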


\subsection{Hypertrees} By  definition, an $r$-tree on $n$ vertices is a hypergraph $T$ with $n - r + 1\ge 1$ edges such that there are orderings $e_1, \dotsc, e_{n-r+1}$ of its edges and $v_{1}, \dotsc, v_{n}$ of its vertices such that $e_1 = \{ v_{1}, \dotsc, v_{r} \}$, and for all $i \in \{r+1, \dotsc, n \}$,
\begin{enumerate}[\upshape (T1)]
	\item \label{hypertree:leafbyleaf} $\{v_{i}\} = e_{i-r+1}\setminus \bigcup_{1\le j < i-r+1} e_j$, and 
	\item \label{hypertree:joinedcorrectly} there exists $j \in [i-1]$ such that $e_{i-r+1} \setminus \{ v_{i}  \} \subseteq e_j$. 
\end{enumerate}
It is clear that an ordering of the edges implies an  ordering of the vertices and vice versa. Any such ordering will be called a \emph{valid ordering}. If~$j\in[i-1]$ is the smallest index such that \ref{hypertree:joinedcorrectly} holds for $e_i$ and $v_{i}$, then we call $e_j$  the \emph{parent} of $e_{i-r+1}$ and $e_{i-r+1}$ a \emph{child} of~$e_j$. 

An \emph{$r$-subtree} of $T$ is an $r$-tree $T'$ such that $T' \subseteq T$. For instance, $e_1, \dotsc, e_{j}$ induces a  $r$-subtree of~$T$, for any $1\le j\le n-r+1$. A \emph{rooted $r$-tree} is a pair $(T,\mathbf x)$ where $T$ is an $r$-tree and $\mathbf x\in \partial^\circ(T)$.

\begin{definition}[Layering]\label{def:layering}
	A \emph{layering} for a rooted $r$-tree $(T,\mathbf x)$, with $\mathbf{x} = (x_1, \dotsc, x_{r-1})$, is a tuple $\mathcal L=(L_1,\dotsc,L_m)$, with $m\in\mathbb N$, such that $\{L_1, \dotsc, L_{m} \}$ partitions $V(T)$, and
	\begin{enumerate}[(L1)]
		\item {$x_i \in L_i$ for all $i \in [r-1]$,}
		and $|L_1| = 1$, \label{item:flatten-root}
		\label{item:flatten-initial}
		\item for each  $v \in L_{i+1}$ with $1\le i< m$ there are $w \in L_{i}$, $e \in E(T)$  such that $\{v, w\} \subseteq e$, 
		\label{item:flatten-degree}
		\item for each $e \in E(T)$, there is $j\in[m]$ such that $|e \cap L_{i}| = 1$ for each $j\le i <j+r$.
		\label{item:flatten-correct}
	\end{enumerate}
	We call the tuple $(T,\mathbf x,\mathcal{L})$ a \emph{layered} $r$-tree.
\end{definition}
We say that $(T,\mathbf x)$ has a \emph{trivial layering} $\mathcal L=(L_1,\dotsc,L_m)$ if $|L_i|=1$  for each $i\in [m]$. Note that a layering $(L_1, \dotsc, L_m)$ of $(T,\mathbf x)$ yields an homomorphism mapping all of~$L_i$ to the $i$th vertex of a tight path.
As an example, Figure~\ref{figure:layering} shows a rooted $3$-tree together with a layering.
All rooted $r$-trees have layerings:

\begin{lemma}[{\cite[Lemma 5.4]{PSS2020}}]\label{lemma:flattening}
	Every rooted $r$-tree $(T,\mathbf {x})$ has a layering.
\end{lemma}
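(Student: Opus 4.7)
The plan is to induct on the number of vertices $n = |V(T)|$. For the base case $n = r$, the tree $T$ is a single edge $e = \{v_1, \dotsc, v_r\}$ with $\{x_1, \dotsc, x_{r-1}\} \subseteq e$; letting $v$ denote the unique vertex of $e \setminus \{x_1, \dotsc, x_{r-1}\}$, setting $L_i := \{x_i\}$ for $i \in [r-1]$ and $L_r := \{v\}$ trivially verifies \ref{item:flatten-initial}--\ref{item:flatten-correct}.

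For the inductive step with $n > r$, I would first choose a valid $r$-tree ordering of $T$ whose initial edge $e_1$ contains $\{x_1,\dotsc,x_{r-1}\}$. Since $\mathbf{x} \in \partial^\circ(T)$, such an edge exists, and the fact that any edge of an $r$-tree can serve as the seed of a valid construction order lets us arrange this. Then the last vertex $v$ in this ordering is a leaf of $T$ lying in a unique edge $e = \{v, w_1, \dotsc, w_{r-1}\}$, with $\{w_1, \dotsc, w_{r-1}\}$ contained in some earlier edge $e'$ of $T' := T - v$, and $v \notin \{x_1, \dotsc, x_{r-1}\}$. Applying the inductive hypothesis to the rooted $r$-tree $(T', \mathbf{x})$ yields a layering $\mathcal{L}' = (L'_1, \dotsc, L'_{m'})$. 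I want to extend $\mathcal{L}'$ by placing $v$ into a suitable (possibly new) layer.

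To determine where $v$ goes, apply \ref{item:flatten-correct} to the edge $e'$ of $T'$: its vertices occupy $r$ consecutive layers $\{j, j+1, \dotsc, j+r-1\}$ with one vertex in each. Hence the layers of $w_1, \dotsc, w_{r-1}$ form a set of the shape $\{j, \dotsc, j+r-1\} \setminus \{\ell\}$ for exactly one $\ell$ in this range. I would place $v$ in layer $p := \ell$ when $\ell > j$, and in the new layer $p := j+r$ when $\ell = j$. In either case the layers of $e = \{v\} \cup \{w_i\}$ form $r$ consecutive integers, giving \ref{item:flatten-correct} for $e$; moreover, $p - 1$ lies in the layer set of the $w_i$'s, so some $w_i \in L_{p-1}$ is a neighbour of $v$ via $e$, giving \ref{item:flatten-degree} for $v$. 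All other instances of \ref{item:flatten-degree} and \ref{item:flatten-correct} are inherited from $\mathcal{L}'$. Note that $p \ge 2$ in every case (if $j = 1$ then either $\ell \ge 2$ or $p = r+1$), so $L_1$ and the placement of $x_1, \dotsc, x_{r-1}$ are untouched, preserving \ref{item:flatten-initial}; and since $j+r-1 \le m'$, adding layer $p \le m'+1$ does not create empty layers.

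The main obstacle is the verification that the chosen placement simultaneously respects all three conditions of the layering, and in particular that one never accidentally deposits $v$ in layer $1$ (which would violate $|L_1| = 1$) or disconnect $v$ from the previous layer. These two constraints pin down the choice between $\ell$ and $j+r$ described above, and the simple case analysis laid out in the previous paragraph resolves them; everything else is bookkeeping. A secondary subtlety is ensuring that a valid $r$-tree ordering of $T$ exists whose seed edge contains the rooting set $\{x_1,\dotsc,x_{r-1}\}$, which follows from standard peeling arguments for $r$-trees combined with the fact that $\mathbf{x}$ lies in some edge of $T$.
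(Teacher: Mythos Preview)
The paper does not include a proof of this lemma; it is quoted verbatim from \cite[Lemma~5.4]{PSS2020} and used as a black box. So there is no in-paper argument to compare against.

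That said, your proposal is correct and is the natural approach. The induction on $|V(T)|$ with a leaf-peeling step is exactly how such a statement is typically shown. The key observation you identify---that the $r-1$ vertices $w_1,\dotsc,w_{r-1}$ of the anchor set lie in $r-1$ of the $r$ consecutive layers occupied by the parent edge $e'$, leaving a unique gap $\ell$---is what makes the placement of the new leaf $v$ forced (either into the gap $\ell$ when $\ell>j$, or into a fresh top layer $j+r$ when $\ell=j$). Your case analysis correctly verifies \ref{item:flatten-root}--\ref{item:flatten-correct}, including the point that $p\ge 2$ so $L_1$ stays a singleton, and that $p\le m'+1$ so no empty intermediate layers are created. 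The two auxiliary facts you invoke---that any edge of an $r$-tree can serve as the seed of a valid ordering, and that the final vertex $v_n$ in a valid ordering lies in a unique edge and avoids $e_1$---are both standard and follow from the recursive definition of $r$-trees given in the paper.
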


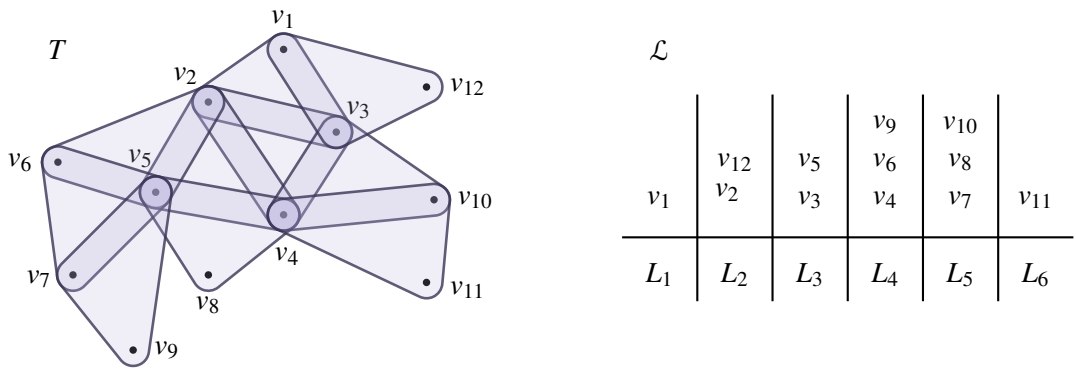
\begin{figure}[h!]
	\centering
	\begin{tikzpicture}[thick, scale=1]
		
		\draw (0,5.4) node {$v_1$};
		\draw (-1.3,4.65) node {$v_2$};
		\draw (1,4.2) node {$v_3$};
		\draw (0.05,2.25) node {$v_4$};
		\draw (-1.9,3.55) node {$v_5$};
		\draw (-3.5,3.5) node {$v_6$};
		\draw (-3.25,1.95) node {$v_7$};
		\draw (-1,1.6) node {$v_8$};
		\draw (-1.55,1) node {$v_9$};
		\draw (2.55,3) node {$v_{10}$};
		\draw (2.45,1.8) node {$v_{11}$};
		\draw (2.45,4.5) node {$v_{12}$};
		
		\draw (-3,5) node {$T$};
		\draw (5,5) node {$\mathcal L$};
		
		\draw (5,3) node {$v_1$};
		\draw (5.9,3.1) node {$v_2$};
		\draw (7,3) node {$v_3$};
		\draw (8,3) node {$v_4$};
		\draw (7,3.5) node {$v_5$};
		\draw (8,3.5) node {$v_6$};
		\draw (9,3) node {$v_7$};
		\draw (9,3.5) node {$v_8$};
		\draw (8,4) node {$v_9$};
		\draw (9,4) node {$v_{10}$};
		\draw (10,3) node {$v_{11}$};
		\draw (6,3.5) node {$v_{12}$};
		
		\draw (5,2) node {$L_1$};
		\draw (6,2) node {$L_2$};
		\draw (7,2) node {$L_3$};
		\draw (8,2) node {$L_4$};
		\draw (9,2) node {$L_5$};
		\draw (10,2) node {$L_6$};
		
		\draw (4.5,2.5) -- (10.5,2.5);
		\draw (5.5,1.65) -- (5.5,4.4);
		\draw (6.5,1.65) -- (6.5,4.4);
		\draw (7.5,1.65) -- (7.5,4.4);
		\draw (8.5,1.65) -- (8.5,4.4);
		\draw (9.5,1.65) -- (9.5,4.4);

		
		\coordinate (v1) at (0,5);
		\coordinate (v2) at (-1,4.3);
		\coordinate (v3) at (0.7,3.9);
		\coordinate (v4) at (0,2.8);
		\coordinate (v5) at (-1.7,3.1);
		\coordinate (v6) at (-3,3.5);
		\coordinate (v7) at (-2.8,2);
		\coordinate (v8) at (-1,2);
		\coordinate (v9) at (-2,1);
		\coordinate (v10) at (2,3);
		\coordinate (v11) at (1.9,1.9);
		\coordinate (v12) at (1.9,4.5);	
		
		\tikzstyle{every node}=[circle, draw, fill, inner sep=0pt, minimum width=2pt]
		\draw (v1) node {};
		\draw (v2) node {};
		\draw (v3) node {};
		\draw (v4) node {};
		\draw (v5) node {};
		\draw (v6) node {};
		\draw (v7) node {};
		\draw (v8) node {};
		\draw (v9) node {};
		\draw (v10) node {};
		\draw (v11) node {};
		\draw (v12) node {};
		
		\triple{(v2)}{(v1)}{(v3)}{6pt}{1pt}{DarkDesaturatedBlue,opacity=0.8}{VerySoftBlue,opacity=0.2};
		\triple{(v2)}{(v3)}{(v4)}{6pt}{1pt}{DarkDesaturatedBlue,opacity=0.8}{VerySoftBlue,opacity=0.2};
		\triple{(v2)}{(v4)}{(v5)}{6pt}{1pt}{DarkDesaturatedBlue,opacity=0.8}{VerySoftBlue,opacity=0.2};
		\triple{(v2)}{(v5)}{(v6)}{6pt}{1pt}{DarkDesaturatedBlue,opacity=0.8}{VerySoftBlue,opacity=0.2};
		\triple{(v6)}{(v5)}{(v7)}{6pt}{1pt}{DarkDesaturatedBlue,opacity=0.8}{VerySoftBlue,opacity=0.2};
		\triple{(v5)}{(v4)}{(v8)}{6pt}{1pt}{DarkDesaturatedBlue,opacity=0.8}{VerySoftBlue,opacity=0.2};
		\triple{(v7)}{(v5)}{(v9)}{6pt}{1pt}{DarkDesaturatedBlue,opacity=0.8}{VerySoftBlue,opacity=0.2};
		\triple{(v4)}{(v3)}{(v10)}{6pt}{1pt}{DarkDesaturatedBlue,opacity=0.8}{VerySoftBlue,opacity=0.2};
		\triple{(v4)}{(v10)}{(v11)}{6pt}{1pt}{DarkDesaturatedBlue,opacity=0.8}{VerySoftBlue,opacity=0.2};
		\triple{(v3)}{(v1)}{(v12)}{6pt}{1pt}{DarkDesaturatedBlue,opacity=0.8}{VerySoftBlue,opacity=0.2};
		
	\end{tikzpicture}
	\label{figure:layering}
	\caption{On the left, a $3$-tree $T$ with valid ordering $v_1, \dotsc, v_{12}$.
		On the right, a table shows the layering $\mathcal L=(L_1,\dotsc, L_6)$ of $T$ when it is rooted at $\mathbf x=(v_1,v_2)$.} 
\end{figure}

Given a layered $r$-tree $(T,\mathbf x,\mathcal{L})$, with $\mathcal{L} = (L_1, \dotsc, L_m)$,
and given $\mathbf {s}\in\orderedshadow(T)$, we say $\mathbf {s}=(s_1,\dotsc,s_{r-1})$ is \emph{$\mathcal{L}$-layered} if {for some $j \in [m]$, we have $s_i \in L_{i+j-1}$ for each $i \in [r-1]$.
	That is, $\mathbf s$ meets $r-1$ consecutive layers of $\mathcal{L}$.}
In that case we say that $j$ is the \emph{rank} of $\mathbf s$. Furthermore, the tree  \textit{induced by $\mathbf s$} is the $r$-subtree $T_{\mathbf s}$ of $T$ spanned by $\bigcup_{i\ge 0}E_i$, where $E_0$ is the child of $\mathbf s$ and $E_{i+1}$ contains all children of edges in $E_i$. The induced subtree $(T_{\mathbf s},\mathbf s)$ has a natural layering $\mathcal L^{\mathbf s}$, called \emph{inherited layering}, which consists of  the sets $L_j\cap V(T_{\mathbf s})$. We write $T-T_{\mathbf s}$ for the tree obtained from $T$ by deleting all edges in $E(T_{\mathbf s})$, and all vertices in $V(T_{\mathbf s})\setminus {\mathbf s}$.

\begin{lemma}[{\cite[Lemma 5.12]{PSS2020}}]\label{lemma:cutting1}
	Let $\Delta\ge r\ge 2$, and let $(T,\mathbf {x},\mathcal L)$ be a layered $r$-tree with  $\mathcal L=(L_1,\dotsc,L_{m})$ and $\Delta_1(T) \leq \Delta$. Let $F\subseteq E(T)$ be the set of all edges meeting $L_1$ and {let $\mathbf {S} \subseteq \orderedshadow(T)$ consist of all the $\mathcal{L}$-layered tuples whose unordered vertices are in $\{e\setminus L_1: e\in F\}$.} Then,
	\begin{enumerate}[\upshape(i)]
		\item each $\mathbf s\in \mathbf S$ is $\mathcal L$-layered and has rank $2$,
		\item $|F|=|\mathbf S|\le\Delta$, and
		\item $E(T)=F\sqcup\bigsqcup_{\mathbf {s} \in \mathbf{S}}E(T_{\mathbf s})$.
	\end{enumerate}
\end{lemma}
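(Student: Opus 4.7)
The plan is to read off the structural content of conditions (L1) and (L3), then decompose $T$ across the unique vertex of $L_1$.

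For parts (i) and (ii), (L1) forces $L_1=\{x_1\}$, so $F$ is exactly the set of edges of $T$ containing $x_1$, and $|F|=\deg_T(x_1)\le\Delta_1(T)\le\Delta$. Applying (L3) to any $e\in F$, the window of $r$ consecutive layers meeting $e$ must start at index $1$, so $|e\cap L_i|=1$ for each $i\in[r]$ and $e\setminus L_1$ has exactly one vertex in each of $L_2,\dotsc,L_r$. An $\mathcal{L}$-layered ordering $\mathbf{s}=(s_1,\dotsc,s_{r-1})$ of $e\setminus L_1$ satisfies $s_i\in L_{i+j-1}$ for a common $j$; combined with the previous sentence this forces $j=2$ and determines the ordering uniquely (put in position $i$ the unique vertex of $e$ in $L_{i+1}$). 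This gives (i) together with a bijection $e\mapsto\mathbf{s}(e)$ between $F$ and $\mathbf{S}$, which settles (ii).

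For (iii), I would establish disjointness and coverage separately. Disjointness of $F$ from every $T_{\mathbf{s}}$ holds because descendants of $\mathbf{s}$ are obtained by iteratively attaching child edges that introduce new vertices in layers of index $\ge 2$; an induction on descent-depth using (L3) and the inherited layering on $T_{\mathbf{s}}$ yields $V(T_{\mathbf{s}})\subseteq L_2\cup\cdots\cup L_m$, so $x_1\notin V(T_{\mathbf{s}})$. For distinct $\mathbf{s},\mathbf{s}'\in\mathbf{S}$, disjointness of $E(T_{\mathbf{s}})$ and $E(T_{\mathbf{s}'})$ follows from the uniqueness of parents in an $r$-tree: any common edge would trace back along the parent chain to two different edges of $F$, contradicting injectivity of $e\mapsto\mathbf{s}(e)$. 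For coverage, I would choose a valid ordering of $T$ whose seed edge $e_1$ is the edge of $T$ supporting the root tuple $\mathbf{x}$; such an edge exists and lies in $F$ because $x_1\in\mathbf{x}$. With this rooting, every other edge has a parent chain terminating at $e_1\in F$, so each $e\in E(T)\setminus F$ has a well-defined first ancestor $e^*\in F$ along its chain, and by the recursive definition of $T_{\mathbf{s}(e^*)}$ as the edges descending from $\mathbf{s}(e^*)$, one gets $e\in E(T_{\mathbf{s}(e^*)})$.

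The main obstacle is the bookkeeping for (iii): one must verify carefully that descendants of $\mathbf{s}$ never re-enter $L_1$ under the inherited layering, and that the parent chain of every edge outside $F$ meets $F$. Both points become routine after fixing the valid ordering compatible with rooting at $e_{\mathbf{x}}\in F$, at which stage (iii) is a consequence of the unique-parent structure of $r$-trees. Alternatively, one could prove (iii) by induction on $|E(T)|$, peeling off a leaf edge using (T1) and restricting $\mathcal{L}$ to the smaller tree; the base case of a single edge is immediate from the analysis in the first paragraph.
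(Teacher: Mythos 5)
Your argument is correct, and note that this paper does not reprove the lemma at all — it is imported verbatim from [PSS2020, Lemma 5.12], so there is no in-text proof to compare against; your route (pin down $L_1=\{x_1\}$ via (L1), force the window $j=1$ in (L3) for edges of $F$ to get rank $2$ and the bijection $e\mapsto\mathbf{s}(e)$, then decompose via unique parent chains after rooting the valid ordering at the edge of $F$ containing $\mathbf{x}$) is the natural one and surely matches the cited proof. The only step I would make explicit is why every edge attached at $\mathbf{s}(e^\ast)=e^\ast\setminus\{x_1\}$ has $e^\ast$ itself as its parent: with the ordering rooted at an edge containing $x_1$, the new vertex of $e^\ast$ lies in $e^\ast\setminus\{x_1\}$, so $e^\ast$ is the earliest edge containing that $(r-1)$-set, which is exactly what makes $E_0$ of $T_{\mathbf{s}(e^\ast)}$ consist of children of $e^\ast$ and makes the first-$F$-ancestor map well defined.
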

With this at hand, we can find a small subtree of a given layered $r$-tree.

\begin{lemma} \label{lemma:smallsubtree}
	Let $\Delta\ge r\ge 2$ and let $1/n\ll \gamma,\Delta$. Let $(T,\mathbf x,\mathcal L)$ be a layered $r$-tree on $n$ vertices with $\Delta_1(T)\le \Delta$. Then there exists $\mathbf x'\in\orderedshadow(T)$ such that
	\begin{enumerate}
		\item $\mathbf x'$ is $\mathcal L$-layered,
		\item $V(T')\cap V(T-T_{\mathbf{x}'})=\mathbf x'$, and
		\item ${\gamma n}/(2\Delta)\le|E(T_{\mathbf{x}'})|\le \gamma n$.
	\end{enumerate} 
\end{lemma}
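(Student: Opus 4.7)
The plan is to build $\mathbf{x}'$ and $T_{\mathbf{x}'}$ by iteratively descending from the root, each time into the largest available child subtree, stopping the first time the number of edges drops to at most $\gamma n$. I expect conditions (i) and (ii) to be essentially automatic from \cref{lemma:cutting1} and the definition of $T - T_{\mathbf{x}'}$, so the real work will be quantitative: checking that the drop at the terminal step is not too dramatic, so that the lower bound in (iii) also holds.

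Formally, I would define a sequence of layered $r$-trees $(T^{(i)}, \mathbf{x}^{(i)}, \mathcal{L}^{(i)})$, starting with $(T, \mathbf{x}, \mathcal{L})$. At step $i$, if $|E(T^{(i)})| \le \gamma n$ I stop and output $\mathbf{x}' = \mathbf{x}^{(i)}$, $T_{\mathbf{x}'} = T^{(i)}$. Otherwise, I apply \cref{lemma:cutting1} to $(T^{(i)}, \mathbf{x}^{(i)}, \mathcal{L}^{(i)})$, obtaining sets $F^{(i)} \subseteq E(T^{(i)})$ and $\mathbf{S}^{(i)} \subseteq \orderedshadow(T^{(i)})$ with $|F^{(i)}| = |\mathbf{S}^{(i)}| \le \Delta$ and the decomposition $E(T^{(i)}) = F^{(i)} \sqcup \bigsqcup_{\mathbf{s} \in \mathbf{S}^{(i)}} E(T^{(i)}_{\mathbf{s}})$. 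I then pick $\mathbf{s}^{(i)} \in \mathbf{S}^{(i)}$ maximising $|E(T^{(i)}_{\mathbf{s}})|$, and set $T^{(i+1)} := T^{(i)}_{\mathbf{s}^{(i)}}$, $\mathbf{x}^{(i+1)} := \mathbf{s}^{(i)}$, with $\mathcal{L}^{(i+1)}$ the inherited layering.

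Averaging over $\mathbf{S}^{(i)}$ yields the key estimate $|E(T^{(i+1)})| \ge (|E(T^{(i)})| - \Delta)/\Delta$. Termination holds since the root edge of $T^{(i)}$ always belongs to $F^{(i)}$, so $|E(T^{(i)})|$ strictly decreases. Let $i^*$ be the first step with $|E(T^{(i^*)})| \le \gamma n$. If $i^* \ge 1$, the previous step had $|E(T^{(i^*-1)})| > \gamma n$, which combined with the averaging bound gives $|E(T^{(i^*)})| \ge (\gamma n - \Delta)/\Delta \ge \gamma n/(2\Delta)$ for $n$ large enough, which holds by $1/n \ll \gamma, \Delta$. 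The edge case $i^* = 0$ is even easier: then $|E(T)| = n - r + 1 \ge \gamma n/(2\Delta)$ as well. Condition (i) is inherited from \cref{lemma:cutting1}, since the inherited layering simply restricts the layers of $\mathcal{L}$ to the current subtree, so any $\mathcal{L}^{(i)}$-layered tuple is also $\mathcal{L}$-layered (it sits in $r-1$ consecutive original layers). Condition (ii) is automatic from the definition of $T - T_{\mathbf{x}'}$, which removes from $T$ precisely the vertices of $V(T_{\mathbf{x}'}) \setminus \mathbf{x}'$.

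The main obstacle I anticipate is the bookkeeping for layerings across iterations: one must confirm carefully that applying \cref{lemma:cutting1} to an inherited layering makes sense (the hypothesis of the lemma is phrased for the abstract layered $r$-tree, and one needs the right re-indexing convention so that $L_1^{(i)}$ has size one at each step) and that the resulting tuples are layered in the original $\mathcal{L}$. This is routine given the definitions, but it is where most care is needed; the rest of the argument is the averaging computation above.
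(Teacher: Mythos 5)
Your proof is correct and is essentially the paper's argument in dual form: the paper chooses an $\mathcal L$-layered tuple of \emph{highest rank} with $|E(T_{\mathbf x'})|\ge \gamma n/(2\Delta)$ (so the lower bound holds by construction) and derives the upper bound from the decomposition of Lemma~\ref{lemma:cutting1}, whereas you greedily descend into the largest child until the edge count first drops to at most $\gamma n$ (so the upper bound holds by construction) and derive the lower bound from the same decomposition. Both arguments rest on the identical estimate $|E(T^{(i)})|\le \Delta+\Delta\cdot\max_{\mathbf s}|E(T^{(i)}_{\mathbf s})|$ together with $1/n\ll\gamma,\Delta$, so this is the same proof.
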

\begin{proof}
	Let $\mathbf x'$ be an $\mathcal L$-layered tuple with highest rank such that $E(T_{\mathbf{x}'})\ge \gamma n/(2\Delta)$. Because of Lemma~\ref{lemma:cutting1} for $(T_{\mathbf{x}'},\mathbf x',\mathcal L^{\mathbf x'})$, there exists a set $F\subseteq E(T_{\mathbf{x}'})$ and $\mathbf S\subseteq \orderedshadow(T_{\mathbf{x}'})$ with $|F|=|\mathbf S|\le \Delta$ such that $E(T_{\mathbf{x}'})=F\sqcup\bigsqcup_{\mathbf s\in\mathbf S}E(E_{\mathbf s})$. Using the maximality of $\mathbf x'$ and $1/n\ll \Delta,\gamma$, we have
	\begin{align*}|E(T_{\mathbf{x}'})| & \le \Delta +\Delta \cdot\frac{\gamma n}{2\Delta}\le \gamma n. \qedhere \end{align*}
\end{proof}
\subsection{The clique $r$-graph}\label{def:clique-graph}
Let $r\ge k\ge 2$.
Given a $k$-graph $H$,
the \emph{$r$-clique $r$-graph} $K_r(H)$ is the $r$-graph on $V(H)$ whose edges correspond to $r$-cliques of $H$.
In particular, $K_k(H)=H$ and $\partial_k(K_r(H)) \subseteq H$. Similarly, $J \subseteq K_r(\partial_k(J))$ for any $r$-graph $J$.

Given $F\in K_{r-1}(H)$,  write $\deg_{r}(F)= \deg_{K_r(H)}(F)$ for the number of $r$-cliques in $H$ that contain $F$. We can bound $\deg_r(F)$ by discarding from $V(H)-F$ all vertices which do not form an edge in $H$ with at least one of the $(k-1)$-subsets of $F$, this gives
\begin{align}\label{eq:r-degree}
	\deg_r(F)\ge n-(r-1)-\sum_{f\in \binom{V(F)}{k-1}}(n-(r-1)-\deg_H(f)).
\end{align}
\begin{lemma} \label{lemma:crucialbound}
	Let $j\ge k \geq 2$,  $\alpha>0$, and let $H$ be a $k$-graph on $n\ge (j+1)^2/\alpha$ vertices with $\delta(H) \geq \left(1-f_k(j+1) + \alpha \right)n$. Then for any  $C_1, C_2 \in K_{j}(H)$ with $|C_1 \cap C_2| = j-1$,
	there are at least $\alpha n$ vertices $v$ such that $C_1 \cup \{v\}, C_2 \cup \{v\}\in K_{j+1}(H)$.
\end{lemma}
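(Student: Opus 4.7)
The plan is a direct union bound, in the same spirit as inequality~\eqref{eq:r-degree}. Writing $D := C_1 \cap C_2$ (with $|D| = j-1$), the first observation is that a vertex $v \notin C_1 \cup C_2$ satisfies both $C_1 \cup \{v\},\ C_2 \cup \{v\} \in K_{j+1}(H)$ if and only if $f \cup \{v\} \in H$ for every $(k-1)$-subset $f$ in the family
\[
\mathcal F := \binom{C_1}{k-1} \cup \binom{C_2}{k-1}.
\]
This is because $C_1$ and $C_2$ are already $j$-cliques, so the only new $k$-subsets that need to be edges of $H$ in order to extend each of $C_1,C_2$ by $v$ are those containing $v$.

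Next I would compute $|\mathcal F|$ by inclusion--exclusion. Since $\binom{C_1}{k-1} \cap \binom{C_2}{k-1} = \binom{D}{k-1}$, one has $|\mathcal F| = 2\binom{j}{k-1} - \binom{j-1}{k-1}$, which Pascal's identity simplifies to
\[
|\mathcal F| = \binom{j}{k-1} + \binom{j-1}{k-2} = \frac{1}{f_k(j+1)}.
\]
This identity is the whole reason the constant $f_k$ is calibrated the way it is in the main theorems: the codegree hypothesis is tuned to cancel the size of $\mathcal F$ exactly.

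To finish, the codegree hypothesis gives $n - \deg_H(f) \le (f_k(j+1) - \alpha)\,n$ for each $f \in \mathcal F$. A union bound over $\mathcal F$, together with excluding the $j+1$ vertices of $C_1 \cup C_2$, lower-bounds the number of good $v$ by
\[
n - (j+1) - |\mathcal F| \cdot (f_k(j+1) - \alpha)\,n \;=\; |\mathcal F|\,\alpha n - (j+1),
\]
where the equality uses $|\mathcal F| \cdot f_k(j+1) = 1$. Since $|\mathcal F| \ge 2$ (because $j \ge k \ge 2$) and $n \ge (j+1)^2/\alpha$ yields $\alpha n \ge j+1$, this quantity is at least $\alpha n$, as required. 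There is no real combinatorial obstacle; the only delicate point is confirming the arithmetic identity $|\mathcal F| = 1/f_k(j+1)$, which is precisely what makes the constant $f_k$ in the codegree threshold the natural one for extending pairs of cliques sharing a facet.
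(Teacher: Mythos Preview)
Your proof is correct and follows essentially the same approach as the paper: count the $(k-1)$-subsets in $C_1 \cup C_2$ via inclusion--exclusion, simplify with Pascal's identity to get $|\mathcal F| = 1/f_k(j+1)$, and apply a union bound. You actually spell out the final inequality $(|\mathcal F|-1)\alpha n \ge j+1$ more explicitly than the paper does, but the argument is the same.
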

\begin{proof}
	We count the number of $(k-1)$-sets completely contained in either $C_1$ or $C_2$.
	Each of $C_1, C_2$ has $\binom{j}{k-1}$ such sets of which $\binom{j-1}{k-1}$ are shared,
	so by inclusion-exclusion and by Pascal's identity, the desired number is $2\binom{j}{k-1} - \binom{j-1}{k-1} = \binom{j}{k-1}+\binom{j-1}{k-2}$.
	So, the number of possible choices for $v$ is at least 
	\begin{align*}
		n-j-1-\left(\tbinom{j}{k-1}+\tbinom{j-1}{k-2}\right)\left (n-\left(1-f_k(j+1)+\alpha\right)n\right)& \ge \alpha n. \qedhere 
	\end{align*}
\end{proof}
\begin{corollary} \label{corollary:extendtwovertices}
	Let $r \ge k \geq 2$, $\alpha>0$ and let $H$ be a $k$-graph on $n\ge r^2/\alpha$ vertices with $\delta(H) \geq \left(1 - f_k(r) + \alpha \right)n$.   Then for any distinct $u, v\in V(H)$ there are at least $(\alpha n)^{r-1}$  cliques $C \in K_{r-1}(H)$ such that $C \cup \{u\},C \cup \{v\}\in K_r(H)$.
\end{corollary}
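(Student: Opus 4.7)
The plan is to iterate \cref{lemma:crucialbound} to produce many pairs of $r$-cliques $C^u, C^v \in K_r(H)$ with $u \in C^u$, $v \in C^v$, and $|C^u \cap C^v| = r-1$. Each such pair gives rise to an $(r-1)$-clique $C := C^u \setminus \{u\} = C^v \setminus \{v\} \in K_{r-1}(H)$ (being an $(r-1)$-subset of the $r$-clique $C^u$) with $C \cup \{u\}, C \cup \{v\} \in K_r(H)$, so the task reduces to producing at least $(\alpha n)^{r-1}$ such pairs (up to the $(r-1)!$ overcount from orderings).

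I would construct the pairs by growing an ordered sequence $(y_1, \dotsc, y_{r-1})$ of distinct vertices in $V(H) \setminus \{u, v\}$ in two phases. In the base phase, I would pick $y_1, \dotsc, y_{k-2}$ freely (providing roughly $n^{k-2}$ ordered options) and then choose $y_{k-1}$ so that both $\{y_1, \dotsc, y_{k-1}, u\}$ and $\{y_1, \dotsc, y_{k-1}, v\}$ lie in $H$. Since $f_k$ is monotonically decreasing with $f_k(k) = 1/2$, the hypothesis gives $\delta(H) \ge (1/2 + \alpha)n$, so each of the $(k-1)$-sets $\{y_1, \dotsc, y_{k-2}, u\}$ and $\{y_1, \dotsc, y_{k-2}, v\}$ extends to an edge of $H$ by at least $(1/2 + \alpha)n$ vertices; inclusion–exclusion then provides $\ge 2\alpha n - O(1)$ valid choices for $y_{k-1}$. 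This initializes a pair $C^u_k, C^v_k \in K_k(H)$ sharing $k-1$ vertices.

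In the iterative phase, for each $j = k, k+1, \dotsc, r-1$, assuming $C^u_j, C^v_j \in K_j(H)$ share $j-1$ vertices, I would invoke \cref{lemma:crucialbound} (whose codegree hypothesis $\delta(H) \ge (1 - f_k(j+1) + \alpha)n$ is implied by the corollary's hypothesis since $f_k(r) \le f_k(j+1)$) to find at least $\alpha n$ vertices $y_j$ such that $C^u_{j+1} := C^u_j \cup \{y_j\}$ and $C^v_{j+1} := C^v_j \cup \{y_j\}$ both lie in $K_{j+1}(H)$. Excluding the at most $r$ previously chosen vertices leaves $\ge \alpha n - O(r)$ valid options.

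After the $r-k$ extensions, the ordered tuple $(y_1, \dotsc, y_{r-1})$ determines the pair $C^u_r, C^v_r$ and hence the clique $C = \{y_1, \dotsc, y_{r-1}\}$. Multiplying out, the number of valid ordered tuples is at least $c\alpha^{r-k+1} n^{r-1}$ for some $c = c(k,r) > 0$, and each unordered $(r-1)$-set is counted at most $(r-1)!$ times; using $\alpha \le 1$ and $r-k+1 \le r-1$ (i.e.\ $\alpha^{r-k+1} \ge \alpha^{r-1}$), this delivers the claimed bound after absorbing constants. The only mildly subtle step is the base phase, since \cref{lemma:crucialbound} cannot be applied for $j < k$ and the initial pair of $k$-edges must be produced directly via codegree and inclusion–exclusion; monotonicity of $f_k$ provides the crucial slack.
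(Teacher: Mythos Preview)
Your proposal is correct and follows essentially the same route as the paper: a base phase that produces pairs of $k$-cliques through $u$ and $v$ sharing $k-1$ vertices via the codegree bound $\delta(H)\ge(1/2+\alpha)n$ and inclusion--exclusion, followed by repeated applications of \cref{lemma:crucialbound} (with $j=k,\dots,r-1$, using monotonicity of $f_k$) to grow to $r$-cliques. The paper phrases the count in terms of unordered $j$-sets $Q_j$ rather than ordered tuples, but the argument and the final arithmetic are the same; in particular your ``absorbing constants'' step mirrors the paper's implicit use of $\alpha$ being small to reach the stated bound $(\alpha n)^{r-1}$.
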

\begin{proof}
	For $k-1\le j\le r-1$,  let $Q_j$ be the number of $j$-sets of vertices $X$ such that both $X\cup\{u\}$ and $X\cup\{v\}$ belong to $K_{j+1}(H)$.
	We will prove inductively that $Q_j\ge \alpha^jn^j$.
	For $j=k-1$, note that since $ \delta(H) \ge n/2+\alpha n$, for any $(j-1)$-set $S\subseteq V(H) \setminus \{u,v\}$,
	the sets $S\cup\{u\}$ and $S\cup\{v\}$ have at least $2\alpha n$ common neighbours.
	This yields $Q_{k-1}\ge\alpha^{k-1} n^{k-1}$.
	
	For the inductive step, let $k-1 \leq j < r-1$ 	and consider any $X\in K_{j}(H)$ with $X\cup \{u\}, X\cup \{v\}\in K_{j}(H)$.
	Use Lemma~\ref{lemma:crucialbound}, with $C_1=X\cup \{u\}$ and $C_2=X\cup \{v\}$, to see that there are at least $\alpha n$ many vertices $w$ such that both $X\cup \{u,w\}$ and $X\cup \{v,w\}$ belong to $K_{j+1}(H)$.
	So $Q_{j+1}\ge \alpha nQ_{j}\ge \alpha^{j}n^{j}$ (the second inequality holds by induction).
\end{proof}

\subsection{Reservoir}

Given a $k$-graph $H$, a $(k-1)$-set $f \in \partial_{k-1}(H)$,
and a subset of vertices $W\subseteq V(H)$,
we define $\deg_H(f, W) = \left|\{v\in W: f\cup\{v\}\in H \}\right|$.
Given an edge $e\in H$, let $d_H^K(e)$ be the number of copies of $K^{(k)}(2)$ in $H$ in which $e$ participates, where $K^{(k)}(2)$ is the complete $k$-partite $k$-graph with parts of size $2$.
\begin{definition}
	Let $\gamma, \mu>0$ and let $H$ be a $k$-graph on $n$ vertices. We say that $W\subseteq V(H)$ is a {$(\gamma,\mu)$-reservoir} for $H$ if
	\begin{enumerate}
		\item $|W|=(\gamma\pm \mu)n$,
		\item {for every $f \in \partial_{k-1}(H)$ we have $\deg_H(f, W)\ge (\deg_H(f)/n- \mu)|W|$, and}
		\item for every $e\in H$, we have $d_H^K(e,W)\ge (d_H^K(e)/\binom {n-k}k-\mu)\binom{|W|-k}k$.
	\end{enumerate}	
\end{definition}

The following lemma was proved in \cite{PSS2020} using an standard probabilistic argument.

\begin{lemma}[{\cite[Lemma 6.7]{PSS2020}}]\label{lem:reservoir}
	Let $1/n\ll \mu\ll \gamma\le 1$. Then every $r$-graph $H$ on $n$ vertices has a
	$(\gamma,\mu)$-reservoir. 
\end{lemma}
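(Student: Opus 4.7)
The plan is to construct $W$ by including each vertex of $V(H)$ independently with probability $\gamma$, and to verify that all three reservoir properties hold simultaneously with positive probability via a union bound. The two linear conditions (i) and (ii) are routine; the only real subtlety lies in (iii), where $d_H^K(e,W)$ is not linear in the indicator variables.

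Property (i) is immediate: $|W|$ is a sum of $n$ independent Bernoulli$(\gamma)$ random variables with expectation $\gamma n$, so \cref{lemma:concentration}(ii) applied with $\eps = \mu/(2\gamma)$ yields $|W| = (\gamma \pm \mu)n$ except with probability $\exp(-\Omega(\mu^2 n))$. For property (ii), fix $f \in \partial_{k-1}(H)$; then $\deg_H(f, W) = \sum_{v:\, f\cup\{v\}\in H} \mathbf{1}_{v \in W}$ is a sum of $\deg_H(f)$ independent $\{0,1\}$-valued variables with mean $\gamma \deg_H(f)$. When $\deg_H(f) \ge \mu n$, \cref{lemma:concentration}(ii) gives concentration at the right scale with failure probability $\exp(-\Omega(\mu^3 n))$; when $\deg_H(f)$ is smaller, the target inequality $\deg_H(f, W) \ge (\deg_H(f)/n - \mu)|W|$ is essentially vacuous, and the upper tail provided by \cref{lemma:concentration}(i) closes the gap. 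A union bound over the at most $n^{k-1}$ tuples $f \in \partial_{k-1}(H)$ keeps the total failure probability $o(1)$.

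Property (iii) is the delicate step: $d_H^K(e,W)$ is not a sum of independent indicators, since each copy of $K^{(k)}(2)$ through $e$ is counted only when all $k$ of its partner vertices simultaneously lie in $W$, so one is counting a degree-$k$ polynomial in the indicators $\mathbf{1}_{v\in W}$. However, the function is Lipschitz in each coordinate: flipping the membership of a single vertex $v$ alters $d_H^K(e,W)$ by at most the number of $K^{(k)}(2)$-copies through $e$ that use $v$ as a partner, which is at most $k\binom{n-1}{k-1} = O(n^{k-1})$. Applying the bounded-differences inequality (McDiarmid) with this Lipschitz constant over $n$ independent coordinates yields exponential concentration of $d_H^K(e,W)$ around its mean $\gamma^k d_H^K(e)$ at the target scale $t = \Theta(\mu \gamma^k n^k)$, with failure probability $\exp(-\Omega_{k,\gamma}(\mu^2 n))$. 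A union bound over the at most $n^k$ edges $e$ keeps the total failure probability $o(1)$; combining with the concentration of $|W|$ from (i) converts the bound from $\gamma^k\binom{n-k}{k}$ to $\binom{|W|-k}{k}$ up to an additive $O(\mu)$ loss, yielding (iii) in the stated form.

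Combining the three estimates, a random $W$ meets all three conditions simultaneously with positive probability, so a $(\gamma,\mu)$-reservoir exists. The main obstacle is (iii): the quantity of interest is a polynomial of degree $k$ in the independent indicators rather than a linear function, so the basic Chernoff bounds of \cref{lemma:concentration} do not directly apply. Still, since the Lipschitz constant $O(n^{k-1})$ is small relative to the critical scale $\Theta(\mu \gamma^k n^k)$, McDiarmid's inequality suffices, and there is no need to invoke finer polynomial concentration tools such as Kim--Vu.
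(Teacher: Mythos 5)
The paper offers no proof of this lemma at all: it is imported as a black box from \cite{PSS2020} (Lemma 6.7 there) with the remark that it follows from a standard probabilistic argument. Your write-up is precisely the argument that remark alludes to -- include each vertex independently with probability $\gamma$, use Chernoff for the two linear statistics, and use a bounded-differences inequality for the degree-$k$ statistic $d_H^K(e,W)$ -- and your diagnosis of where the subtlety lies is correct: $d_H^K(e,W)$ is a degree-$k$ polynomial in the indicators, but its per-coordinate Lipschitz constant $O_k(n^{k-1})$ is small against the target deviation $\Theta(\mu\gamma^k n^k)$, so McDiarmid gives failure probability $\exp(-\Omega_{k,\gamma,\mu}(n))$ per edge, which survives the union bound over $O(n^k)$ edges; Kim--Vu is indeed unnecessary. (For (ii), note that when $\deg_H(f)<\mu n$ the required inequality is vacuous because its right-hand side is negative, so no upper-tail estimate is needed there.)

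One step needs tightening, though the issue is quantitative rather than conceptual. In (iii) you pass from the bound $d_H^K(e,W)\ge \gamma^k d_H^K(e)-\Theta(\mu\gamma^k n^k)$ to the stated normalisation $\binom{|W|-k}{k}$ ``using the concentration of $|W|$ from (i)'', i.e.\ at scale $\mu n$. But an upward fluctuation $|W|=(\gamma+\mu)n$ inflates $\binom{|W|-k}{k}$ by a multiplicative factor $(1+\mu/\gamma)^k\approx 1+k\mu/\gamma$, and hence inflates the right-hand side of (iii) by an additive $\Theta\bigl(k\mu\gamma^{k-1}d_H^K(e)\bigr)$; when $d_H^K(e)=\Theta\bigl(\binom{n-k}{k}\bigr)$ this exceeds the slack $\mu\binom{|W|-k}{k}=\Theta(\mu\gamma^k n^k/k!)$ provided by the ``$-\mu$'' in the definition, by a factor of order $k\cdot k!/\gamma$. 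So concentration of $d_H^K(e,W)$ about its mean at any scale, combined only with $|W|=(\gamma\pm\mu)n$, does not literally close the argument. The fix is routine: establish the concentration of $|W|$ (and of $d_H^K(e,W)$) at a finer auxiliary scale $\mu'=\mu\gamma^{k+1}/C_k$, which the same Chernoff and McDiarmid bounds deliver since $1/n\ll\mu\ll\gamma$; then the inflation of $\binom{|W|-k}{k}$ is multiplicatively $1+O(\mu'/\gamma)$ and is comfortably absorbed by the $\mu$-slack. With that adjustment your proof is complete.
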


\section{Connections}\label{section:connections}

An ordered sequence $(x_1, \dotsc, x_n)$ of vertices from a $k$-graph $H$ is a \emph{walk} if every $k$ consecutive vertices form an edge.
We  often write a walk as the juxtaposition of its vertices, i.e. $x_1 \dotsb x_n$.
The \emph{length} of a walk is the number of its edges counted with possible repetitions, e.g. $x_1 \dotsb x_n$ has length $n-k+1$. 

For a walk $W = x_1 \dotsb x_n$ in $H$, the \emph{start of $W$} is  $\sta{W}:=(x_1, \dotsc, x_{k-1})\in \orderedshadow(H)$, and the \emph{end of $W$} is $\ter{W}:=(x_{n-k+1}, \dotsc, x_{n})\in\orderedshadow(H)$. We also say that $W$ \emph{goes from $\sta{W}$  to $\ter{W}$}.
The \emph{interior} $\int{W}$ of $W$ is the set $V(W)\setminus(\sta{W}\cup \ter{W})$, and we say that $W$ \emph{has $|\int{W}|$ internal vertices}. 
Note that $|\int{W}| \leq n - 2k - 2$. If $\int{W}\cap S=\emptyset$, we call $W$  \emph{internally disjoint} from~$S$. 

We say that $H$ is \emph{$(\alpha,\ell)$-connecting} if there exists $q\le \ell$ such that for every $\mathbf{v}_1,\mathbf{v}_2\in\orderedshadow (H)$ there are at least $\alpha^qv(H)^q$ walks in $V(H) \setminus (\mathbf{v}_1\cup \mathbf{v}_2)$ which go from $\mathbf{v}_1$ to $\mathbf{v}_2$, each of length $\ell$ and with exactly $q$ internal vertices.

\begin{lemma} \label{lemma:connectivity}
	Let $r\ge k\ge 2$, let  $\ell\ge  4kr!$ and let $1/n\ll\alpha \ll 1/\ell$. If $H$ is a  $k$-graph  on $n$ vertices with 
	$\delta(H) \geq \big(1 - f_k(r) + \alpha \big)n$,   then $K_{r}(H)$ is $({\alpha}/{2},\ell)$-connecting.
\end{lemma}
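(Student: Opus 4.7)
My plan is to induct on $r\ge k$. In the base case $r=k$, we have $K_k(H) = H$ and the hypothesis becomes $\delta(H) \ge (1/2 + \alpha)n$; the desired $(\alpha/2, \ell)$-connectivity of $H$ reduces to the bounded-length tight-walk connectivity for $k$-graphs of codegree at least $n/2$, which was established in~\cite{PSS2020}. For the inductive step, I would assume $K_{r-1}(H)$ is $(\alpha',\ell')$-connecting for some $\alpha' \ge 2\alpha$ and $\ell' \le \ell-O_r(1)$; this is legitimate because $f_k$ is monotone decreasing, so our codegree hypothesis strictly implies the one needed at level $r-1$.

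Given $\mathbf v_1,\mathbf v_2\in\partial^\circ(K_r(H))$, I would extract $(r-2)$-subtuples $\mathbf u_i\subseteq\mathbf v_i$ (say by dropping the first coordinate of $\mathbf v_1$ and the last of $\mathbf v_2$); both lie in $\partial^\circ(K_{r-1}(H))$ since any $(r-2)$-subset of an $(r-1)$-clique is an $(r-2)$-clique. By the inductive hypothesis, there are at least $(\alpha')^{q'}n^{q'}$ walks $Q=p_1\dotsb p_M$ in $K_{r-1}(H)$ between $\mathbf u_1$ and $\mathbf u_2$. To each such $Q$ I would associate walks in $K_r(H)$ from $\mathbf v_1$ to $\mathbf v_2$ by inserting a bounded number of new vertices into $Q$ at carefully chosen positions. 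The idea is that each window of $Q$ is an $(r-1)$-clique $C_i=\{p_i,\dotsc,p_{i+r-2}\}$ which needs one additional vertex to become an $r$-clique, and by spacing insertions roughly every $r-1$ positions along $Q$ each window of $W$ of size $r$ will consist of an $(r-1)$-clique of $Q$ together with exactly one inserted vertex. Each inserted vertex is then required to simultaneously extend a bounded number (of order $O(r)$) of consecutive $(r-1)$-cliques of $Q$ to $r$-cliques.

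The supply of valid insertions would be controlled by iterating \cref{lemma:crucialbound}: two consecutive $(r-1)$-cliques of $Q$ share $r-2$ vertices, so they admit at least $\alpha n$ common vertices extending them to $r$-cliques; iterating this argument within the candidate set along a bounded run of consecutive cliques yields at least $\alpha^{O_r(1)} n$ simultaneous extensions at each insertion slot. Combining this with the $(\alpha')^{q'}n^{q'}$ count of walks from the inductive hypothesis then gives the required $(\alpha/2)^q n^q$ walks in $K_r(H)$ of length $\ell$, for an appropriate $q\le\ell$. The main obstacle is arranging that the inserted vertices fit into a coherent tight-walk structure, i.e.\ that the resulting sequence of $r$-cliques $D_1,D_2,\dotsc$ satisfies $|D_i\cap D_{i+1}|=r-1$; the cleanest way is to use the same inserted vertex $y$ across a run of consecutive $C_i$'s (which makes $|D_i\cap D_{i+1}|=(r-2)+1$ automatically) and then transition between two different inserted vertices $y,y'$ via an auxiliary pair $D=C_s\cup\{y\},D'=C_s\cup\{y'\}$. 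A secondary technicality is that the two specific endpoint vertices in $\mathbf v_i\setminus\mathbf u_i$ must be slotted into the first and last insertion positions without spoiling the count; this reduces to showing that these specific vertices fall in the required common-extension set a positive fraction of the time, which follows from the codegree bound~\eqref{eq:r-degree}.
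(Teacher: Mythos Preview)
Your inductive framework (base case via~\cite{PSS2020}, then lift walks from $K_{r-1}(H)$ to $K_r(H)$ by inserting vertices) is exactly the paper's strategy. However, the lifting step as you describe it contains a genuine gap. You propose to space insertions roughly every $r-1$ positions, so that a single inserted vertex $y$ simultaneously extends a run of $O(r)$ consecutive $(r-1)$-cliques $C_j,\dots,C_{j+r-1}$ of $Q$, and you claim that ``iterating \cref{lemma:crucialbound} within the candidate set'' gives at least $\alpha^{O_r(1)}n$ such $y$. This does not follow: \cref{lemma:crucialbound} lower-bounds the number of common extensions of \emph{two} $(r-1)$-cliques overlapping in $r-2$ vertices, and the bound is essentially tight, since the number of $(k-1)$-subsets in $C_1\cup C_2$ is exactly $\binom{r-1}{k-1}+\binom{r-2}{k-2}=1/f_k(r)$. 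For three or more consecutive cliques the number of $(k-1)$-subsets in the union strictly exceeds $1/f_k(r)$, so the codegree hypothesis $\delta(H)\ge(1-f_k(r)+\alpha)n$ no longer guarantees any common extension at all. There is no mechanism to ``iterate within the candidate set'': knowing that $|A_{12}|\ge\alpha n$ (extensions of $C_1,C_2$) and $|A_{23}|\ge\alpha n$ (extensions of $C_2,C_3$) says nothing about $|A_{12}\cap A_{23}|$.

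The paper's remedy is to insert a \emph{separate} new vertex $x_j$ at \emph{every} edge of the base walk $W$, requiring each $x_j$ to extend only the two consecutive cliques $\{w_j,\dots,w_{j+r-2}\}$ and $\{w_{j+1},\dots,w_{j+r-1}\}$; this is exactly what \cref{lemma:crucialbound} provides. The price is that the lifted walk is much longer and revisits the vertices of $W$ many times, taking the form
\[
w_1w_2\cdots w_r\,x_1\,w_2w_3\cdots w_{r+1}\,x_2\,w_3w_4\cdots,
\]
so that every $r$-window is some $C_j\cup\{x_j\}$ or $C_{j+1}\cup\{x_j\}$. A second point you should be aware of is that the paper strengthens the inductive hypothesis to allow the walks to avoid a prescribed set $X$ of bounded size; this is how the dropped endpoint vertices (your $\mathbf v_i\setminus\mathbf u_i$) are kept out of the interior of the base walk, rather than by the probabilistic ``positive fraction of the time'' argument you sketch.
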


In order to prove Lemma~\ref{lemma:connectivity}, we will use the following lemma for which we need a quick definition. We say that a $k$-graph $H$ is {\em $(m,U)$-large}, for $U\subseteq V(H)$, if any two $f, f' \in \partial_{k-1}(H)$ have at least $m$ common neighbours in $U$. Note that the $k$-graph $H$ from Lemma~\ref{lemma:connectivity} is $(2\alpha n,V(H))$-large.

\begin{lemma}[{\cite[Lemma~7.1]{PSS2020}}]\label{lemma:strengthenedconnectinglemma}
	For $k, \ell$ with $k \ge 2$ and $\ell \ge (2k+1)\lfloor k /2 \rfloor + 2k$, there exists $q \leq \ell$ with the following property.
	Let $1/n \ll \gamma \ll 1/k, 1/\ell$. Let $H$ be a $k$-graph on $n$ vertices which is $(\gamma n, U)$-large for some $U \subseteq V(H)$,
	and let $\mathbf{v}_1, \mathbf{v}_2 \in \orderedshadow(H)$.
	Then there are at least $(\gamma n/2)^{q}$ walks $W$,
	each of length $\ell$ and with $q$ internal vertices,
	which go from $\mathbf{v}_1$ to $\mathbf{v}_2$ and are internally disjoint from $(V(H) \setminus U) \cup \mathbf{v}_1 \cup \mathbf{v}_2$.
\end{lemma}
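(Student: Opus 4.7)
The plan is to construct tight walks of length exactly $\ell$ from $\mathbf{v}_1$ to $\mathbf{v}_2$ by a greedy vertex-by-vertex extension, exploiting the $(\gamma n, U)$-largeness at every step. Writing $\mathbf{v}_1 = (a_1, \dotsc, a_{k-1})$ and $\mathbf{v}_2 = (b_1, \dotsc, b_{k-1})$, a candidate walk has the form $a_1 \dotsb a_{k-1}\, x_1 \dotsb x_m\, b_1 \dotsb b_{k-1}$ with $m = \ell - k + 1$, and the tight-walk condition demands that every window of $k$ consecutive vertices is an edge of $H$.

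The driving observation is that if the walk currently ends in a $(k-1)$-tuple $\mathbf y \in \orderedshadow(H)$, then the next internal vertex may be chosen as any element of $N_H(\mathbf y) \cap U$ not previously used; by $(\gamma n, U)$-largeness there are at least $\gamma n - O(\ell) \geq \gamma n / 2$ such choices. When a newly chosen vertex must simultaneously participate in an already-prescribed future tail edge, applying largeness to the relevant pair of $(k-1)$-tuples still gives $\geq \gamma n$ common neighbours in $U$, again yielding $\geq \gamma n / 2$ valid choices after discarding previously used vertices. I would split the $m$ internal positions into an initial \emph{free} segment, where each $x_j$ is picked subject only to continuity with its predecessor $(k-1)$-tuple, and a short terminal \emph{matching} segment of size $O(k)$ whose vertices must be chosen so that the $k-1$ tail edges meeting $b_1, \dotsc, b_{k-1}$ all become edges of $H$. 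The lower bound $\ell \geq (2k+1)\lfloor k/2 \rfloor + 2k$ is exactly what lets the matching segment be long enough to absorb the tail constraints two $(k-1)$-tuples at a time, while forcing the overall walk length to hit $\ell$ on the nose rather than merely at most $\ell$.

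Taking $q$ to be the number of internal positions that were picked under a single pairwise-largeness constraint, multiplying the $\geq \gamma n / 2$ choices per step yields at least $(\gamma n / 2)^q$ distinct walks, matching the claimed bound. I expect the main obstacle to be the analysis of the matching segment: one must verify that every vertex choice there reduces to a common-neighbourhood constraint involving at most a pair of $(k-1)$-tuples so that the $(\gamma n, U)$-largeness hypothesis applies directly, and that successive choices can be made consistently without any two constraints collapsing onto a vertex that has already been used. This is also where the specific shape of the length bound $\ell \ge (2k+1)\lfloor k /2 \rfloor + 2k$ must be extracted precisely, by tracking how many adjustment rounds the matching phase needs and how many continuity constraints bear on each constrained position.
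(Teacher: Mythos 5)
There is a genuine gap, and it sits exactly where you locate "the main obstacle": the landing phase at $\mathbf{v}_2$ cannot be reduced to pairwise common-neighbourhood constraints if the walk has the direct form $a_1\dotsb a_{k-1}x_1\dotsb x_m b_1\dotsb b_{k-1}$ with all $x_j$ new vertices. In that form the last internal vertex $x_m$ lies in every window $(x_{m-k+1+j},\dotsc,x_m,b_1,\dotsc,b_j)$ for $j=0,1,\dotsc,k-1$, so it must be a common neighbour of $k$ different $(k-1)$-tuples simultaneously (already three tuples when $k=3$), and no ordering of the choices removes this: the constraint pattern is forced by the suffix $b_1\dotsb b_{k-1}$. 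The hypothesis of $(\gamma n,U)$-largeness only controls intersections of \emph{two} neighbourhoods, so the greedy count $\geq \gamma n/2$ per step breaks down precisely at these positions, and your proposal supplies no mechanism for repairing it.

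The missing idea — which is what the proof in \cite{PSS2020} actually uses, and which is signalled by the statement itself — is that the connecting objects are \emph{walks}, not paths, and that the interior is a vertex \emph{set} excluding $\ter{W}$, so the walk may revisit the vertices $b_1,\dotsc,b_{k-1}$ in its internal positions without contributing to $q$ or violating internal disjointness from $\mathbf{v}_2$. One lands on $\mathbf{v}_2$ through an interleaved sequence in which $b_1,\dotsc,b_{k-1}$ each appear several times, organised in roughly $\lfloor k/2\rfloor$ rounds of length about $2k+1$; in each round every newly inserted vertex needs to be a common neighbour of only two prescribed $(k-1)$-tuples, which is exactly what largeness provides. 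This is why the length bound is $\ell\ge(2k+1)\lfloor k/2\rfloor+2k$ and why the number $q$ of distinct internal vertices is genuinely smaller than the number $\ell-k+1$ of internal positions — two features of the statement that your direct construction cannot reproduce. Your free initial segment and the padding to exact length $\ell$ are fine, but without the repeated-vertex landing gadget the argument does not close.
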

\begin{proof}[Proof of Lemma~\ref{lemma:connectivity}]
	We consider a slightly stronger definition of connectivity for this proof.
	Given $\alpha>0$ and $\ell,q, x\in\mathbb N$, we say that $H$ is \emph{$(\alpha,\ell,q, x)$-connecting} if for every $\mathbf{v}_1,\mathbf{v}_2\in\orderedshadow (H)$ and every set $X\subseteq V(H)$ with $|X|\le x$, there are at least $\alpha^qv(H)^q$  walks which go from $\mathbf{v}_1$ to $\mathbf{v}_2$, each of length $\ell$, with $q$ internal vertices, and avoiding $\mathbf{v}_1\cup \mathbf{v}_2\cup X$.
	
	In order to prove the lemma, we will prove the following:
	\begin{align}
		\parbox[c]{0.8\linewidth}{for all $i$ with $k\le i \le r$,
			and for all $\ell \geq t \geq 4 k i!$,
			there exists $q \leq t$ such that the $i$-graph $K_{i}(H)$ is $({\alpha}/{2}, t, q, 2\ell(r-i) )$-connecting.}
		\label{equation:connectinginductive}
	\end{align}
	{We prove this by induction on $i$.
		Note that since $1/n \ll \alpha \ll 1/\ell, 1/k, 1/r$,
		the removal of any $2\ell(r-k)$ vertices $X$ from a $(2 \alpha n, V(H))$-large graph results in a $(\alpha n, V(H) \setminus X)$-large graph.
		Thus Lemma~\ref{lemma:strengthenedconnectinglemma}  proves the base case $i=k$.}
	
	Now suppose that $k < i \leq r$.
	Let $\ell_{i-1} = 4k(i-1)!$.
	By the induction hypothesis, there exists $q_{i-1} \leq \ell_{i-1}$ such that $K_{i-1}(H)$ is $(\alpha/2, \ell_{i-1}, q_{i-1}, 2\ell(r-i+1))$-connecting. Let $\ell_i = i(\ell_{i-1}-i+1)$ and $q_i:=q_{i-1}+\ell_{i-1}-i+1$.
	We will start by showing that $K_{i}(H)$ is $(\alpha/2,\ell_i, q_i, 2\ell(r-i)+\ell)$-connecting. 
	
	Let $\mathbf{v}_1, \mathbf{v}_2 \in \partial^\circ(K_i(H))$ and $X \subseteq V(K_i(H))$ with $|X| \leq 2\ell(r-i)+\ell$ be arbitrary. Now let $\mathbf{v}_1'=(v_2,\dotsc,v_{i-1})$ and $\mathbf{v}_2'=(u_1,\dotsc,u_{i-2})$ 
	and $X':=X \cup\{v_1, u_{i-1}\}$, so $|X'| \leq |X|+2 \leq 2\ell(r-i)+\ell+2 \leq 2\ell(r-i+1)$.
	Since $K_{i-1}(H)$ is $(\alpha/2, \ell_{i-1}, q_{i-1}, 2\ell(r-i+1))$-connecting, there are at least $(\alpha/2)^{q_{i-1}}n^{q_{i-1}}$ walks $W=(w_1, w_2, \dotsc ,w_{\ell_{i-1}+k-1})$ in $K_{i-1}(H)$ which go from  $\mathbf{v}_1'$ and $\mathbf{v}_2'$ whose interior avoids $\mathbf{v}_1'\cup \mathbf{v}_2'\cup X'=\mathbf{v}_1\cup \mathbf{v}_2\cup X$.
	That is, $\sta{W}=(v_2,\dotsc,v_{i-1})$, $\ter{W}=(u_1,\dotsc,u_{i-2})$, and every ${i-1}$ consecutive vertices in $W$ belong to $K_{i-1}(H)$.
	Also, each $W$ uses exactly $q_{i-1}$ internal vertices.
	
	For each such $W$, and for each $1\le j \le \ell_{i-1}-i+2$, we apply Lemma~\ref{lemma:crucialbound} to $H-W$ to find at least  $\alpha n/2$ vertices $x_j \in V(H)\setminus (V(W)\cup X)$ such that
	\begin{align}
		\{x_j,w_j,w_{j+1},\dotsc,w_{j+i-2}\}, \{x_j,w_{j+1},\dotsc,w_{j+i-1}\}\in K_{i}(H).
	\end{align}
	Then the walk $W'$ given by
	$$w_1w_2\dotsb w_{i} x_1 w_2 w_3 \dotsb w_{i+1} x_2 w_3 w_4 \dotsb w_{i+2} x_3 \dotsb x_{\ell_{i-1}-i+1} w_{\ell_{i-1}-i+2} w_{\ell_{i-1}-i+4}\dotsb w_{\ell_{i-1}}$$
	is a walk
	in $K_{i}(H)$ that goes from $\mathbf{v}_1$ to $\mathbf{v}_2$, has length $i(\ell_{i-1}-i+1) = \ell_i$ and uses $q_i$ internal vertices, and avoids $\mathbf{v}_1\cup \mathbf{v}_2\cup X$.
	As there are at least 
	$(\alpha/2)^{q_{i-1}}n^{q_{i-1}}$ walks $W$ to start with,
	and $\alpha n/2$ choices for each of the vertices~$x_j$, we deduce that there are at least $$(\alpha/2)^{q_{i-1}}n^{q_{i-1}}(\alpha n/2)^{\ell_{i-1}-i+1}= (\alpha/2)^{q_i}n^{q_i}$$ such walks $W'$.
	This proves that $K_{i}(H)$ is $(\alpha/2,\ell_i, q_i, 2\ell(r-i)+\ell)$-connecting.
	
	To finish the proof of \eqref{equation:connectinginductive},
	we need to find walks of any length $t$ satisfying $\ell \geq t > \ell_i$ avoiding any given $X \subseteq V(H)$ of size at most $2 \ell(r-i)$, for that we proceed as follows.
	Starting from any $\mathbf{v}_1$,
	use the degree in $K_i(H)$ to greedily find the first $t-\ell_i$ vertices of a walk starting in $\mathbf{v}_1$, and say those new vertices are $X'$.
	Clearly $X'$ can be chosen in at least $(\alpha n / 2)^{t - \ell_i}$ different ways.
	Next, since $|X \cup X'| \leq 2 \ell(r-i) + (t - \ell_i) \leq 2\ell(r-i)+\ell$,
	we can finish the walks using that $K_{i}(H)$ is $(\alpha/2,\ell_i, q_i, 2\ell(r-i)+\ell)$-connecting to avoid $X \cup X'$.
\end{proof}

We now show how to connect vertex-disjoint ordered sets from the shadow of the $r$-clique graph of our host graph with tight paths (not just walks) in a controlled way. 

\begin{lemma}\label{lemma:connectivity:cycles}
	Let $r\ge k\ge 2$, $\ell\ge  5kr!$ and $1/n\ll\alpha, 1/\ell \le 1$. Let $H$ be an $n$-vertex $k$-graph  with $\delta(H) \geq \left(1 -f_k(r) + \alpha \right)n.$	Then for each $U\subseteq V(H)$ with $|U|\le \alpha n/2$, and for each vertex-disjoint pair $\mathbf{v}_1,\mathbf{v}_2\in \partial^\circ (K_r(H-U))$, there is a tight path $P$ of length $\ell$ in $K_r(H-U)$  with $\sta{P}=\mathbf{v}_1$ and $\ter{P}=\mathbf{v}_2$.
\end{lemma}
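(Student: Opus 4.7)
I plan to prove \cref{lemma:connectivity:cycles} by combining the walk-based connectivity of \cref{lemma:connectivity} with greedy tight-path extension. First set $H' := H - U$. Since $|U| \le \alpha n/2$ and $1/n \ll \alpha$, the codegree descends to $\delta(H') \ge (1 - f_k(r) + \alpha/2) v(H')$, so \cref{lemma:connectivity} applies to $H'$ (with parameter $\alpha/4$). Moreover, inequality~\eqref{eq:r-degree} and the codegree bound imply that every $(r-1)$-clique $C$ in $H'$ satisfies $\deg_r(C) \ge \beta v(H')$ for some $\beta = \beta(\alpha, r, k) > 0$; hence any tight path in $K_r(H')$ can be greedily extended by a fresh vertex avoiding any excluded set of size $o(n)$.

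The strategy is to split $\ell = \ell_1 + \ell_0 + \ell_2$, where $\ell_0 := 4kr!$ and $\ell_1, \ell_2 \ge 0$; this is possible since $\ell \ge 5kr!$ gives $\ell - \ell_0 \ge kr! > 0$. Build by greedy extension a tight path $P_1$ of length $\ell_1$ from $\mathbf{v}_1$ in $K_r(H')$, avoiding $U \cup \mathbf{v}_2$ and ending at some $\mathbf{u}_1 \in \partial^\circ(K_r(H'))$. Symmetrically, build $P_2$ of length $\ell_2$ ending at $\mathbf{v}_2$, starting at some $\mathbf{u}_2$, and avoiding $U \cup \mathbf{v}_1 \cup V(P_1)$. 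The problem then reduces to finding a tight path $Q$ of length exactly $\ell_0$ joining $\mathbf{u}_1$ to $\mathbf{u}_2$ in the restricted graph $H'' := H' - (V(P_1 \cup P_2) \setminus (\mathbf{u}_1 \cup \mathbf{u}_2))$. Since the removed set has size $O(\ell) \ll \alpha n$, the codegree of $H''$ still satisfies a bound allowing \cref{lemma:connectivity} to apply; and then $P_1 \circ Q \circ P_2$ is the desired tight path of length $\ell$.

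The main obstacle is the construction of $Q$: \cref{lemma:connectivity} applied to $H''$ delivers only short tight \emph{walks} of length $\ell_0$ with $q$ distinct interior vertices, and generally $q < \ell_0 - r + 1$, so these walks are not tight paths---the inductive zig-zag construction inherently revisits interior vertices. To obtain a tight path of the same length, I would re-execute the inductive construction of \cref{lemma:connectivity}, but at each zig-zag step insert \emph{fresh} vertices in place of each revisited one. Each such fresh vertex must satisfy $O(r)$ many $r$-clique conditions---one per $r$-tuple that contains it---and by iterated use of \cref{lemma:crucialbound} each condition has $\Omega(n)$ solutions, so a common extension exists. Since $\ell_0$ is a constant depending only on $r$ and $k$, only a bounded number of fresh choices are required, and the construction terminates. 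The principal technical challenge is coordinating these substitutions through the $r-k$ inductive levels while ensuring the resulting tight path has exactly the prescribed length $\ell_0$.
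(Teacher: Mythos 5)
Your setup (passing to $H' = H-U$, greedy tight-path extension using \eqref{eq:r-degree}, and reducing to a fixed-length connection between two tuples) is fine, but the step you yourself flag as the ``main obstacle'' is exactly where the proof lives, and your proposed resolution has a genuine gap. In the zig-zag walk $w_1\dotsb w_i x_1 w_2\dotsb w_{i+1} x_2 \dotsb$ produced in the proof of \cref{lemma:connectivity}, a repeated occurrence of some $w_j$ sits in $\Theta(i)$ consecutive windows of the tight walk, so a fresh vertex substituted at that position must \emph{simultaneously} complete $\Theta(i)$ different $(i-1)$-cliques to $i$-cliques. The codegree hypothesis $\delta(H)\ge(1-f_k(r)+\alpha)n$ is calibrated, via the inclusion--exclusion count in \cref{lemma:crucialbound}, to guarantee a common extension of exactly \emph{two} cliques sharing all but one vertex (cost $\binom{j}{k-1}+\binom{j-1}{k-2}$ forbidden $(k-1)$-sets); for the family of windows containing a given position the union of relevant $(k-1)$-sets is far larger, and the same count gives a negative bound already for $k=2$. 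The inference ``each condition has $\Omega(n)$ solutions, so a common extension exists'' is a non sequitur: the solution sets need not intersect, and under this codegree they generally will not. So the fresh-vertex patch, as described, does not go through, and no alternative mechanism for upgrading walks to paths is supplied.

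The paper circumvents this entirely by never trying to connect two \emph{fixed} tuples with a path. Instead it greedily grows $r-1$ further vertices from $\mathbf{v}_1$ in all possible ways, producing a family $\mathbf{F}_1$ of at least $\delta n^{r-1}$ candidate end-tuples (and similarly $\mathbf{F}_2$ from $\mathbf{v}_2$), and then applies \cref{lemma:embeddingtrunk} with $T$ a tight path: that lemma converts ``many walks of length $\ell-2r+2$ between \emph{every} pair in $\mathbf{F}_1\times\mathbf{F}_2$'' (supplied by \cref{lemma:connectivity}) into an injective embedding, i.e.\ a genuine tight path joining \emph{some} $\mathbf{v}_1'\in\mathbf{F}_1$ to \emph{some} $\mathbf{v}_2'\in\mathbf{F}_2$. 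The abundance of both endpoints and walks is essential input to that lemma and is precisely what your single-pair reduction discards. To repair your argument you would either need to invoke \cref{lemma:embeddingtrunk} (after generating the endpoint families as the paper does) or give a new, self-contained walk-to-path argument, which you have not done.
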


For the proof of Lemma~\ref{lemma:connectivity:cycles}, we need some notation. 
Given $\ell_1\le \ell_2\le m$, and a layering $\mathcal L=(L_1, \dotsc, L_{m})$ of a rooted $r$-tree $(T, \mathbf x)$,
set $V_{[\ell_1,\ell_2]}:=\bigcup_{\ell_1\le i\le \ell_2} L_i$. 
If  $|L_i|\le M$ for each $\ell_1\le i\le\ell_2$, we say $V_{[\ell_1,\ell_2]}$ is {\em $M$-bounded}.
We write $\mathbf y\in \partial^\circ (T[V_{[\ell_1,\ell_2]}])$ if there is an index $j\in [\ell_1, \ell_2-r+1]$ such that $\mathbf y=(y_1,\dotsc, y_{r-1})\in \partial^\circ (T)$ with $y_i\in L_{j+i-1}$ for all $i\in [r-1]$.
The following auxiliary result from~\cite{PSS2020} essentially says that whenever we can connect many tuples with many paths of length $\ell$, we can also embed arbitrary $r$-trees which have $\ell$ layers using those tuples.

\begin{lemma}[{\cite[Lemma~8.4]{PSS2020}}] \label{lemma:embeddingtrunk}
	Let $1/n \ll 1/r, 1/q, 1/\ell, 1/M, \delta, \alpha$ and $2\le r \le (\ell-1)/2$.
	Let $\mathcal{L} = (L_1, \dotsc, L_{m})$ be a layering of a rooted $r$-tree $(T, \mathbf x)$, and assume $V_{[t, t+\ell]}$ is $M$-bounded. \\
	If $T_I \subseteq T[V_{[t, t+\ell]}]$, $G$ is an $r$-graph on $n$ vertices, $U \subseteq V(G)$ and $\mathbf F_1, \mathbf F_2 \subseteq \partial^\circ (G)$ are such that
	\begin{enumerate}[\rm (I)]
		\item $|\mathbf F_1|, |\mathbf F_2| \ge \delta n^{r-1}$, and \label{item:embedding-manyends}
		\item for every $\mathbf v_1 \in \mathbf F_1$ and $\mathbf v_2 \in \mathbf F_2$ there are at least $\alpha n^q$ many walks going from $\mathbf v_1$ to $\mathbf v_2$, each of length $\ell-r+1$, each with $q$ internal vertices all from $U\setminus \mathbf v_1\cup \mathbf v_2$, \label{item:embedding-manywalks}
	\end{enumerate}
	then there exists an embedding $\phi: V(T_I) \rightarrow V(G)$ such that
	\begin{enumerate}
		\item $\phi(\mathbf v)\in  \mathbf F_1$ for each $\mathbf v\in \partial^\circ (T_I[V_{[t,t+r-2]}])$,
		\item $\phi(\mathbf v)\in  \mathbf F_2$ for each $\mathbf v\in \partial^\circ (T_I[V_{[t+\ell - r + 2,t+\ell]}])$, and
		\item $\phi(v) \in U$ for each $v \in V_{[t+r-1, t+\ell-r+1]}$.
	\end{enumerate}	
\end{lemma}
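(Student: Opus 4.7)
The plan is to embed $T_I$ layer-by-layer from layer $L_t$ to $L_{t+\ell}$, exploiting that $|V(T_I)| \leq (\ell+1)M$ is a constant under the hierarchy $1/n \ll 1/r, 1/q, 1/\ell, 1/M, \delta, \alpha$, and that the layering of $T_I$ yields a homomorphism $\psi \colon T_I \to P$ to a tight $r$-path $P = p_t \dotsb p_{t+\ell}$ sending $V(T_I) \cap L_{t+i-1}$ to $p_{t+i-1}$. Any embedding satisfying (1)--(3) is essentially a blow-up of a walk in $G$ from $\mathbf F_1$ through $U$ to $\mathbf F_2$, where each layer is blown up by at most a factor of $M$.

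The first step is to handle the boundary conditions. I would pick a single tuple $\mathbf v_1 \in \mathbf F_1$ and a single tuple $\mathbf v_2 \in \mathbf F_2$, and then extend them to assignments of the first and last $r-1$ layers of $T_I$. Because each layer has size at most $M$ and $|\mathbf F_1|, |\mathbf F_2| \geq \delta n^{r-1}$ by~\ref{item:embedding-manyends}, a random extension of the boundary layers into $V(G)$ works with positive probability: for each of the at most $M^{r-1}$ tuples in $\partial^\circ(T_I[V_{[t,t+r-2]}])$ (and similarly for the final layers), averaging over random $(r-1)$-tuples shows a constant-fraction chance that the image tuple falls in $\mathbf F_1$ (resp.\ $\mathbf F_2$), and a union bound yields a simultaneously valid choice.

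The second step is to fill in the inner layers. By~\ref{item:embedding-manywalks}, there are $\alpha n^q$ walks from $\mathbf v_1$ to $\mathbf v_2$ of length $\ell - r + 1$ with $q$ internal vertices in $U$. I would pick one such walk $W$, discard the few that intersect the already-used boundary images, and use it to embed a spine of $T_I$---a tight sub-$r$-path meeting each inner layer exactly once---by setting $\phi$ on the spine to follow the vertex sequence of $W$. The remaining at most $M-1$ vertices of each inner layer are then added one at a time: for each such $v$ with $(r-1)$-parent $\mathbf u$ of already-embedded vertices in previous layers (existing by~\ref{item:flatten-correct}), averaging the $\alpha n^q$ walks through~$\phi(\mathbf u)$ yields $\Omega(n)$ candidate vertices $w \in U$ with $\phi(\mathbf u) \cup \{w\} \in G$, leaving ample room to avoid the constant number of previously-used images.

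The hardest step will be coordinating the boundary extension with the spine walk: the random choice of boundary images from the first step must be compatible with \emph{some} spine walk from~\ref{item:embedding-manywalks}, so the two random processes are not independent. I would handle this by interleaving: first fix the skeleton endpoints $\mathbf v_1, \mathbf v_2$ from~\ref{item:embedding-manywalks}, then condition the boundary randomisation on these endpoints, and use the $\delta n^{r-1}$ lower bound from~\ref{item:embedding-manyends} to guarantee that a positive fraction of endpoint pairs admit a compatible boundary extension. The constant size of $T_I$, combined with the concentration given by Lemma~\ref{lemma:concentration}, should then close the argument.
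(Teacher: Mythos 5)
This lemma is quoted from~\cite{PSS2020} and not proved in the present paper, so I am judging your argument on its own terms. It has a genuine gap at the boundary step. You claim that since each random $(r-1)$-tuple lands in $\mathbf F_1$ with probability roughly $\delta$, ``a union bound yields a simultaneously valid choice'' for all of the at most $M^{r-1}$ tuples of $\partial^\circ(T_I[V_{[t,t+r-2]}])$. A union bound runs in the wrong direction here: each individual tuple \emph{fails} to lie in $\mathbf F_1$ with probability about $1-\delta$, so the union bound gives an upper bound of $M^{r-1}(1-\delta)$ on the failure probability, which exceeds $1$ for small $\delta$. Since hypothesis~\ref{item:embedding-manyends} only says $\mathbf F_1$ is a $\delta$-dense set of tuples, with no independence between the events, your randomised choice does not show that all boundary tuples can simultaneously be placed in $\mathbf F_1$. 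What is actually needed is a supersaturation argument: the layered shadow tuples spanning $L_t,\dotsc,L_{t+r-2}$ form a fixed $(r-1)$-partite $(r-1)$-graph, whose Tur\'an density is zero, so a $\delta$-dense $(r-1)$-graph contains $\Omega(n^{|V_{[t,t+r-2]}\cap V(T_I)|})$ ordered copies of it (this is the same device the paper uses in the proof of \cref{proposition:absorbingtuples}). Your argument only works as written when the first $r-1$ layers contain a single shadow tuple, e.g.\ when $M=1$.

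A second, related problem is the extension of non-spine vertices in the inner layers. Hypothesis~\ref{item:embedding-manywalks} counts walks between a fixed pair $(\mathbf v_1,\mathbf v_2)$; once you ``pick one such walk $W$'', there is no guarantee that any other walk passes through a given intermediate tuple of $W$, and, worse, the $(r-1)$-parent $\mathbf u$ of a non-spine vertex may itself contain non-spine vertices from earlier layers, so $\phi(\mathbf u)$ need not appear in \emph{any} of the counted walks. Since no minimum-degree assumption on $G[U]$ is available, the claim that ``averaging the $\alpha n^q$ walks through $\phi(\mathbf u)$ yields $\Omega(n)$ candidate vertices'' is unjustified for such tuples. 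Repairing this requires either restricting to walks all of whose intermediate tuples are rich (discarding the atypical ones by an averaging argument) and, more importantly, propagating a ``many extensions'' property through every layered tuple created during the embedding --- a cascading counting argument rather than a single fixed spine plus greedy completion. Both issues disappear in the $1$-bounded case (a tight path), which is the only way the lemma is invoked directly in this paper, but the statement you must prove is the general $M$-bounded one.
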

\begin{proof}[Proof of Lemma~\ref{lemma:connectivity:cycles}]
	{
		Choose constants $1/n\ll \mu \ll\delta\ll\alpha$.
		Set $H'=H-U$. Because of the minimum codegree in $H$, for every $f\in \partial_{k-1}(H')$ we have $\deg_{H'}(f) \geq (1 - f_k(r) + \alpha/2) n$.
		By using Lemma~\ref{lemma:crucialbound} we conclude that for all $f\in\partial_{r-1}(K_r(H'))$,
		\begin{align}\label{eq:degree:cliques:W}
			\deg_{K_r(H')}(f)\ge \frac{\alpha}{2}|V(H')|.
	\end{align}}
	{Let $\mathbf{v}_1=(v_1,\dotsc, v_{r-1})$.
		Use~\eqref{eq:degree:cliques:W} iteratively $r-1$ times,
		to find greedily a tuple $(u_1, \dotsc, u_{r-1})$
		such that $\mathbf{v}_1 u_1 \dotsb u_{r-1}$ is a tight path on $2r-2$ vertices in $K_r(H')$.
		Since there were at least $\alpha |V(H')|/2 - (r-1) \geq \alpha |V(H')|/3$ choices for each $u_i$,
		there are at least $2 \delta |V(H')|^{r-1}$ possible options for the tuple $(u_1, \dotsc, u_{r-1})$. Let $\mathbf{F}'_1$ be the set of all those tuples and construct $\mathbf{F}'_2$ analogously.}
	
	{Let $H'' = H \setminus (\mathbf{v}_1 \cup \mathbf{v}_2)$ and $\mathbf{F}_1$ to be the set of tuples of $\mathbf{F}'_1$ which survive in $H''$, and
		similarly define $\mathbf{F}_2$.
		Clearly $\deg_{K_r(H'')}(f) \geq \alpha |V(H'')|/3$
		and $|\mathbf{F}_1|, |\mathbf{F}_2| \geq \delta |V(H'')|^{r-1}$.
		Because of Lemma~\ref{lemma:connectivity}, we know that $K_r(H'')$ is $(\alpha/6,\ell-2r+2)$-connecting.
		In particular, there exists $q \leq \ell - 2r + 2$ such that for every choice of $\mathbf{v}'_i \in \mathbf{F}_i$
		there are at least $(\alpha/6)^q V(H')^q$ walks in $V(H'') \setminus (\mathbf{v}'_1 \cup \mathbf{v}'_2)$ which go from $\mathbf{v}'_1$ to $\mathbf{v}'_2$ and have length $\ell - 2r + 2$ and exactly $q$ internal vertices.
		Thus we can use Lemma~\ref{lemma:embeddingtrunk} with $T$ being a tight path of length $\ell - 2r + 2$, which is clearly $1$-bounded with the trivial layering with one vertex in each cluster.
		This gives a tight path of length $\ell - 2r + 2$ in $K_r(H'')$
		which connects some $\mathbf{v}'_1 \in \mathbf{F}_1$ with some $\mathbf{v}'_2 \in \mathbf{F}_2$.
		In turn, this yields a tight path of length $\ell$ which goes from $\mathbf{v}_1$ to $\mathbf{v}_2$ and avoids $U$.}
\end{proof}

\section{Uniformly dense clique matching} \label{section:uniformlydense}

In this section we show a matching lemma which is required in the proof of Theorem~\ref{theorem:main-treewidth}.

\subsection{Matching lemma and outline of its proof}
Given a $k$-graph $G$ and disjoint subsets $V_1,\dotsc,V_k \subseteq V(G)$,
let $G[V_1, \dotsc, V_k]$ be the $k$-partite graph with vertex classes $V_1, \dotsc, V_k$, having all edges of $G$ that intersect each of the sets $V_i$ exactly once.
Let $e_G(V_1, \dotsc, V_k)$ denote the number of edges of $G[V_1, \dotsc, V_k]$.
A $k$-partite $k$-graph $G$ on sets $V_1, \dots, V_k$ of the same size $m$  is called \emph{$(\varepsilon, d)$-uniformly dense} if for all choices of $W_i \subseteq V_i$ with $|W_i| = h \geq \varepsilon m$ for all $1 \leq i \leq k$, 
\begin{equation}
	e_G(W_1, \dotsc, W_k) \geq d h^k.
\end{equation} 
Let $G$ be a $k$-graph, let $\varepsilon,d>0$ and $t\in\mathbb N$.
We say that $G$ has an \emph{$(\varepsilon,d)$-uniformly dense perfect matching} $\mathcal M=\{(V_1^i,\dotsc,V_k^i)\}_{i\in [t]}$ and we call sets $\{V_j^i\}_{j\in [k]}$ its {\em edges} if 
\begin{enumerate}
	\item $\{V_a^i\}_{a\in [k],i\in [t]}$ partitions $V(G)$,
	\item $|V_a^i|=|V_{b}^j|$ for all $a,b\in [k]$, $i,j\in [t]$, and
	\item $G[V_1^i,\dotsc,V_k^i]$ is $(\varepsilon,d)$-uniformly dense for each $i\in [t]$.
\end{enumerate}
The aim of this section is to show the following lemma.

\begin{lemma}[Matching lemma] \label{lemma:uniformlydensematching}
	Let $r\ge k\ge 2$, let $1/n\ll 1/M\ll d \ll \varepsilon, \delta, \alpha,1/r$, and let $H$ be an $n$-vertex $k$-graph with
	$\delta(H) \geq \left(1 -f_k(r) + \alpha \right)n$.	Then $H$ has a subgraph $H'\subseteq H$ with $|V(H')|\ge (1-\delta)n$ such that $K_{r}(H')$ admits an $(\varepsilon,d)$-uniformly dense perfect matching with at most $M$ edges.
\end{lemma}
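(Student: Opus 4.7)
The plan is to apply the weak hypergraph regularity lemma to $H$ and find an appropriate $r$-clique tiling in the reduced $k$-graph. First, I would fix auxiliary constants with $1/n \ll 1/t_0 \ll \varepsilon_{\mathrm{reg}} \ll d \ll \varepsilon, \delta, \alpha, 1/r$, and apply the weak regularity lemma to obtain a partition $\{V_0, V_1, \dotsc, V_t\}$ of $V(H)$ with $t_0 \le t \le M/r$, $|V_0| \le \varepsilon_{\mathrm{reg}} n$, and $|V_1| = \dotsb = |V_t| = m$, such that all but at most $\varepsilon_{\mathrm{reg}} \binom{t}{k}$ of the $k$-tuples of clusters are $\varepsilon_{\mathrm{reg}}$-regular. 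Let the reduced $k$-graph $R$ on $[t]$ have an edge for every $k$-tuple that is $\varepsilon_{\mathrm{reg}}$-regular with density at least $2\sqrt{d}$.

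The key difficulty is that $R$ does not quite inherit the codegree condition of $H$. A standard averaging shows that at most $\sqrt{\varepsilon_{\mathrm{reg}}}\binom{t}{k-1}$ of the $(k-1)$-sets in $[t]$ are contained in more than $\sqrt{\varepsilon_{\mathrm{reg}}} t$ irregular $k$-tuples; at every other $(k-1)$-set the codegree of $R$ is at least $\bigl(1 - f_k(r) + \alpha - \sqrt{\varepsilon_{\mathrm{reg}}}\bigr) t$. I would then add a small set of \emph{virtual} edges to the remaining $(k-1)$-sets to form a supergraph $R'$ on $[t]$ with $\delta(R') \ge \bigl(1 - f_k(r) + \alpha/2\bigr) t$, while tagging edges by origin: edges of $R$ are \emph{good}, edges of $R' \setminus R$ are \emph{bad}, and the total number of bad edges incident to any $(k-1)$-set is controlled.

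The central step is to find a near-perfect $\clique{k}{r}$-matching in $R'$ that uses very few bad edges. The codegree condition on $R'$ places us above the codegree threshold for $\clique{k}{r}$-tilings of Lo and Markstr\"om~\cite{LM2015}, which (within the more general framework of Keevash and Mycroft~\cite{KM2015}) yields an almost-spanning $\clique{k}{r}$-matching in $R'$ of size at least $(1 - \delta/2)t/r$. I expect the main obstacle to be showing that only a negligible fraction of cliques of this matching use bad edges; to handle this I would either invoke a weighted/forbidden-edges variant of the Keevash--Mycroft framework, or perform a rotation argument which uses the sparsity of bad edges and the codegree of $R$ (via \cref{lemma:crucialbound}) to swap each clique containing a bad edge for a clique entirely within $R$, at a cost of a bounded number of uncovered vertices.

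Finally, let $\{C_1, \dotsc, C_s\}$ be the cliques of the matching whose edges are all good. By the previous step, the clusters not covered by these cliques have total size at most $\delta n$. For each $C_i = \{j_1, \dotsc, j_r\}$, all $\binom{r}{k}$ $k$-subsets of $\{V_{j_1}, \dotsc, V_{j_r}\}$ are $\varepsilon_{\mathrm{reg}}$-regular of density at least $2\sqrt{d}$ in $H$, so a standard dense counting lemma for regular $r$-partite $k$-graphs shows that the $r$-partite subgraph of $K_r(H)$ on $V_{j_1}, \dotsc, V_{j_r}$ is $(\varepsilon, d)$-uniformly dense. Letting $H'$ be the subgraph of $H$ induced by the union of these clusters (after trimming to equalise cluster sizes) yields a partition of $V(H')$ into at most $rs \le M$ equal-size classes giving the required uniformly dense perfect matching in $K_r(H')$.
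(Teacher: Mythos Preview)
Your outline has the right global shape---regularise, find a clique matching in the reduced graph while dodging irregular edges, then lift via counting---but there are two genuine gaps.

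The more serious one is the use of \emph{weak} hypergraph regularity. For $k \geq 3$, weak $\varepsilon$-regularity of all $k$-subsets of $\{V_{j_1}, \dotsc, V_{j_r}\}$ does \emph{not} support a counting lemma for $\clique{k}{r}$: there exist weakly quasirandom $3$-graphs of any fixed positive density which are $\clique{3}{4}$-free (the random-tournament construction being the standard example). Hence your final step, where you invoke ``a standard dense counting lemma for regular $r$-partite $k$-graphs'' to conclude that $K_r(H)[V_{j_1}, \dotsc, V_{j_r}]$ is $(\varepsilon, d)$-uniformly dense, fails whenever $k \geq 3$ and $r > k$. The paper instead uses the Regular Slice Lemma of Allen, B\"ottcher, Cooley and Mycroft (strong hypergraph regularity packaged for codegree applications), together with its Counting Lemma and Regular Restriction Lemma, to establish exactly this step (\cref{proposition:uniformlydensematchingviaregularity}).

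The second gap is the ``avoiding bad edges'' step, which you leave as ``either a weighted/forbidden-edges variant of Keevash--Mycroft, or a rotation argument''. Neither is routine; the rotation idea in particular has no obvious local exchange that stays inside $R$, since the irregular edges may be placed adversarially and the codegree in $R$ itself fails precisely at the bad $(k-1)$-sets. The paper handles this with a separate lemma (\cref{lemma:Hfactoravoiding}): given any $k$-graph $G$ above the $\clique{k}{r}$-tiling threshold and any small edge set $I$, one gets an almost-spanning $\clique{k}{r}$-matching in $G \setminus I$ by randomly colouring the edges of $I$, applying the rainbow-factor theorem of Coulson, Keevash, Perarnau and Yepremyan to $G$, and then discarding the few cliques that used a colour from $I$. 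This is applied with $G = R \cup I$ (which does inherit the codegree of $H$) and $I$ the irregular edges.
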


The proof can be summarised in the following three steps.

\begin{enumerate}
	\item \label{item:matchingsteps-partition}
	\emph{Obtaining a regular partition.}
	An application of hypergraph regularity to $H$ will yield a partition of $H$ into few parts $\mathcal{P}$ and a  reduced $k$-graph $R$ on vertex set $\mathcal{P}$,
	consisting on $k$-tuples of clusters which are both `dense' and `regular'.
	Each clique on $r$ vertices in $R$ will correspond to a collection of $r$ clusters which could form an edge in a uniformly dense cluster matching of $H$.
	Given this, the goal is to find a $\clique{k}{r}$-matching (for definitions see below)
	in $R$ which is as large as possible.
	
	\item \label{item:matchingsteps-factor}
	\emph{Finding a large clique factor.}
	Our approach to find such a matching is to use the codegree tiling threshold for clique factors.
	More precisely, in this step we will verify that the  codegree of $H$ suffices to find a $\clique{k}{r}$-factor in $H$, that is, a $\clique{k}{r}$-matching which covers every vertex.
	Hence, if we could show that $R$ inherits the relative codegree conditions of $H$ (up to scaling and  a small error),
	we would find the desired large $\clique{k}{r}$-matching in $R$ and we would be done.
	
	\item \label{item:matchingsteps-irregular}
	\emph{Avoiding the irregular edges.}
	However, an obstacle is that $R$ usually will \emph{not} inherit the codegree conditions of $H$ due to the presence of `irregular' edges, which constitute a $k$-graph $I$ on $V(R)$.	Since the union graph $R \cup I$ \emph{will} inherit the codegree conditions of $H$, we can use tools from the previous step to find a clique factor in $R \cup I$.
	We would like to have the factor  avoiding  as much as possible of $I$, which can be done by defining an edge-colouring  and introducing some restraints with respect to this edge-colouring.
\end{enumerate}

Although the structure of the proof is simple,  the use of  hypergraph regularity requires  long and technical definitions, whose use has however become quite standard.
To avoid clutter here, we defer the technical details of step \ref{item:matchingsteps-partition}
to Appendix~\ref{appendix:regularity},
where the proof of \cref{lemma:uniformlydensematching} is given in its entirety.
In what remains of this section we will prove step \ref{item:matchingsteps-factor} (in Lemma~\ref{lemma:cliquetilingbounds}) and step \ref{item:matchingsteps-irregular} (in \cref{lemma:Hfactoravoiding}), which constitute the novel and non-standard ingredients of the proof.

We also remark that (as pointed to us by a referee) the outcome of steps \ref{item:matchingsteps-factor}--\ref{item:matchingsteps-irregular} can be achieved in an alternative way by considering the codegree threshold of \emph{almost} perfect clique tilings.
We decided to keep our original proof because we believe it to be novel and independently interesting, but we will sketch this alternative briefly in \Cref{remark:almost}.

\subsection{Clique factors in hypergraphs} \label{section:cliquefactors}
For $k$-graphs $G$ and $H$,
an \emph{$H$-matching} in $G$ is a set of vertex-disjoint copies of $H$ in $G$, and  an \emph{$H$-factor} is a spanning $H$-matching.
Clearly, if $G$ has an $H$-factor, then $|V(H)|$ divides $|V(G)|$.
Given $1 \leq d < k$ and
an $r$-vertex $k$-graph $H$,  
the \emph{$d$-degree $H$-tiling threshold} $t_d(H,n)$ is the minimum $h$ such that every $n$-vertex $k$-graph~$G$  with $\delta_d(G) \geq h$ and $n$ divisible by $r$ has an $H$-factor.
The \emph{asymptotic $d$-degree $H$-tiling threshold} is $t_d(H) = \limsup_{n \rightarrow \infty} t_d(H,nr)/\binom{nr-d}{k-d}$. 

For $r \geq k \geq 2$, $\clique{k}{r}$ is the $k$-uniform clique on $r$ vertices.
The Hajnal-Szemer\'edi theorem~\cite{HS1970} states that $t_1(\clique{2}{r}) = 1 - 1/r$.
Estimates on $t_{k-1}(\clique{k}{r})$ for $r \geq k \geq 3$ are given in the following lemma, which is the core ingredient to find the clique factor needed in step \ref{item:matchingsteps-factor} of our proof of \cref{lemma:uniformlydensematching}.

\begin{lemma} \label{lemma:cliquetilingbounds}
	For all $r \geq k \geq 3$, we have  $ t_{k-1}(\clique{k}{r}) \leq 1 - f_k(r).$
\end{lemma}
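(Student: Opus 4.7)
The plan is to deploy the absorption method. Fix $\alpha>0$ and let $G$ be an $n$-vertex $k$-graph with $r\mid n$ and $\delta(G)\ge(1-f_k(r)+\alpha)n$. The key input is Corollary~\ref{corollary:extendtwovertices}: for any two distinct vertices $u,v\in V(G)$ there are at least $(\alpha n)^{r-1}$ cliques $C\in K_{r-1}(G)$ with $C\cup\{u\},C\cup\{v\}\in K_r(G)$. Such a triple is a \emph{swap absorber}: the $r$-clique $C\cup\{u\}$ can absorb $v$ by exchanging $u$ for $v$, using that $C\cup\{v\}$ is also an $r$-clique. Fixing $v$ and letting $u$ vary shows in addition that each $v$ lies in $\Omega(n^{r-1})$ copies of $\clique{k}{r}$.

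First, I would use the abundance of absorbers (at least $\Omega(n^r)$ per vertex) together with a standard random-selection argument -- in the spirit of R\"odl, Ruci\'nski and Szemer\'edi~\cite{RRS2008} and Lo and Markstr\"om~\cite{LM2015} -- to produce a vertex-disjoint family $\mathcal{A}$ of $r$-cliques with $|V(\mathcal{A})|\le\eta n$, for some $\eta\ll\alpha$, enjoying the following property: for every $X\subseteq V(G)\setminus V(\mathcal{A})$ with $r\mid|X|$ and $|X|\le\eta^2 n$, the set $V(\mathcal{A})\cup X$ admits a $\clique{k}{r}$-factor in $G$.

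Next I would find a $\clique{k}{r}$-matching in $G':=G-V(\mathcal{A})$ that misses at most $\eta^2 n$ vertices. Since $\delta(G')\ge(1-f_k(r)+\alpha/2)|V(G')|$, iterated application of Lemma~\ref{lemma:crucialbound} gives that every vertex of $G'$ lies in $\Omega(n^{r-1})$ copies of $\clique{k}{r}$, while pairs of vertices are contained in at most $O(n^{r-2})$ such copies. A nibble or random-greedy argument (Pippenger--Frankl--R\"odl, or one of the hypergraph matching theorems of Keevash and Mycroft~\cite{KM2015}) then produces a near-perfect $\clique{k}{r}$-matching in $G'$, whose leftover set $X$ satisfies $r\mid|X|$ (since $r\mid n$ and $r\mid|V(\mathcal{A})|$) and $|X|\le\eta^2 n$. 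The absorbing property of $\mathcal{A}$ completes the matching together with $\mathcal{A}$ into a $\clique{k}{r}$-factor of $G$.

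The main obstacle is the near-perfect matching step. The codegree condition on $G$ translates into only a minimum $(r-1)$-degree of $\Theta(n)$ in the auxiliary $r$-graph $K_r(G')$, far short of the $n/2$ codegree threshold that would yield a near-perfect matching directly from R\"odl--Ruci\'nski--Szemer\'edi. One therefore has to extract enough regularity of the degree sequence of $K_r(G')$ to feed into a nibble argument, or invoke a black-box near-perfect $\clique{k}{r}$-tiling result whose hypotheses can be checked through repeated use of Lemma~\ref{lemma:crucialbound}.
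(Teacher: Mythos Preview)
Your absorption outline is a viable alternative and correctly identifies Corollary~\ref{corollary:extendtwovertices} as the central input, but the paper takes a more direct route. It applies Keevash--Mycroft's Theorem~\ref{theorem:matchingcomplex} to the down-closure $J$ of $K_r(H)$ and obtains the perfect matching in one shot, with no separate absorber and almost-cover stages. Two things are verified. First, the degree sequence $\delta_i(J)\ge(1-i/r+\alpha)n$ for $0\le i<r$, which follows from~\eqref{eq:r-degree} together with the elementary inequality $i\bigl(\tbinom{r-1}{k-1}+\tbinom{r-2}{k-2}\bigr)\ge r\tbinom{i}{k-1}$. Second, the absence of a divisibility barrier: here Corollary~\ref{corollary:extendtwovertices} is deployed not to build swap absorbers but to produce, for any partition $\mathcal P$ of $V(J)$ into at most $r$ large parts, a transferral $\mathbf u_1-\mathbf u_2$ in the robust-edge lattice $L^\mu_{\mathcal P}(J_r)$. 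So the same corollary that powers your absorber is instead used to certify that the lattice is not transferral-free.

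The obstacle you flag in your near-perfect-matching step is genuine as stated: a Pippenger--Frankl--R\"odl nibble needs near-regular vertex degrees in $K_r(G')$, and the codegree hypothesis on $G$ gives only a lower bound $\Theta(n)$ on $(r-1)$-degrees there, not regularity. The cleanest way to close the gap is to compute the same degree sequence $\delta_i(J)\ge(1-i/r+\alpha)n$ and then invoke either the space-barrier half of the Keevash--Mycroft framework or a fractional-matching-plus-weak-regularity argument to obtain the almost-cover. But once that calculation is done you are one step from the paper's argument anyway, and it becomes shorter to drop the absorption layer altogether and let Corollary~\ref{corollary:extendtwovertices} eliminate the divisibility barrier directly.
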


We point out that better upper bounds on $t_{k-1}(\clique{k}{r})$ are known by the work of Lo and Markström~\cite[Corollary 1.5]{LM2015}, but unfortunately their results are stated only for specific combinations of $r, k$ which do not cover all the cases $r \geq k \geq 3$ which we need.

For the proof of Lemma~\ref{lemma:cliquetilingbounds}, we use the powerful framework for hypergraph matchings of Keevash and Mycroft~\cite{KM2015} (see also Han~\cite{H2020} for a simplified proof).

We  need the following standard concepts.
The \emph{down-closure} $\mathcal{H}$ of a $k$-graph $H$ is the usual downward closure of the edge set of $H$.
More generally, for $r\in\mathbb N$, an \emph{$r$-complex} is a hypergraph $J$ whose edges have size at most $r$, and whose edge set is closed under inclusion. For $0 \leq i \leq r$, let $J_i =\{e\in J:|E|=i\}$, and for each {\em $i$-edge} $e \in J_i$,
let $\deg_J(e)=|\{e' \in J_{i+1}: e\subseteq e'\}|$.
The \emph{minimum $i$-degree $\delta_i(J)$ of $J$} is the minimum of $\deg_J(e)$ taken over all $i$-edges $e\in J_i$.

Let $\mathcal{P}=\{V_1, \dotsc, V_t \}$ be a partition of the vertex set of  a $k$-graph $H$.
The \emph{index vector} $\mathbf{i}_{\mathcal{P}}(S) \in \mathbb{Z}^t$ of a set $S \subseteq V(H)$ has coordinates  $\mathbf{i}_{\mathcal{P}}(S)_{i} = |S \cap V_i|$ for $1 \leq i \leq t$.
A vector $\mathbf{v} \in \mathbb{Z}^t$ is a \emph{$k$-vector} if  its coordinates are nonnegative and their sum equals~$k$. Moreover,  if
for some $\mu > 0$,  at least $\mu |V|^k$ edges $e \in E(H)$ satisfy $\mathbf{i}_{\mathcal{P}}(e) = \mathbf{v}$, we call $\mathbf{v}$  a \emph{$\mu$-robust edge vector}.

Let $L^\mu_{\mathcal{P}}(H)$ be the lattice generated by the set of all $\mu$-robust edge-vectors. This lattice is \emph{incomplete} if it does not contain all $k$-vectors in $\mathbb{Z}^t$.
For $1 \leq j \leq t$, let $\mathbf{u}_j$ be the $j$th unit vector, i.e. the vector which has a $1$ in the $j$th coordinate and $0$ otherwise.
A \emph{transferral} is the vector $\mathbf{u}_i - \mathbf{u}_j$ for some $i \neq j$.

\begin{theorem}[{\cite[Theorem 2.12]{KM2015}}] \label{theorem:matchingcomplex}
	Let $1/n \ll \mu \ll \alpha, 1/r$ with $r$ dividing $n$, and let $J$ be an $r$-complex on $n$ vertices with $\delta_i(J) \ge  (1- i/r + \alpha)n$ for each $0\le i<k$. Then one of the following is true:
	\begin{enumerate}
		\item $J_r$ contains a perfect matching, or
		\item $J$ has a \emph{divisibility barrier}, i.e. there is a partition $\mathcal{P}$ of $V(J)$ into at most $r$ parts, each of size at least $\delta_{r-1}(J) - \mu n$, such that $L^{\mu}_{\mathcal{P}}(J_r)$ is incomplete and transferral-free.
	\end{enumerate}
\end{theorem}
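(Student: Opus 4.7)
The plan is to apply Theorem~\ref{theorem:matchingcomplex} to a natural $r$-complex $J$ built from $H$, whose top layer $J_r$ encodes the $r$-cliques of $H$, so that a perfect matching in $J_r$ is exactly a $\clique{k}{r}$-factor. Concretely, given a $k$-graph $H$ on $n$ vertices with $r \mid n$ and $\delta(H) \ge (1-f_k(r)+\alpha)n$, I define $J$ on $V(H)$ by $J_i = \binom{V(H)}{i}$ for $0 \le i < k$ and $J_i = K_i(H)$ (the set of $i$-cliques of $H$) for $k \le i \le r$.

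To verify the degree hypothesis of Theorem~\ref{theorem:matchingcomplex}, the crucial case is $i = k-1$: here $\delta_{k-1}(J)$ equals $\delta(H)$, and a short calculation shows $f_k(r) \le (k-1)/r$ for all $r \ge k \ge 2$ (equivalently $(k-1)(\binom{r-1}{k-1}+\binom{r-2}{k-2}) \ge r$), so $\delta_{k-1}(J) \ge (1-(k-1)/r+\alpha)n$ as needed. The conditions for smaller $i$ hold with room to spare since $J_i$ is complete on $V(H)$ in this range. Theorem~\ref{theorem:matchingcomplex} then yields either a perfect matching in $J_r$, in which case we are done, or a divisibility barrier: a partition $\mathcal{P} = \{V_1,\ldots,V_s\}$ of $V(H)$ with $s \le r$, each $|V_j| \ge \delta_{r-1}(J)-\mu n$, and $L^\mu_\mathcal{P}(J_r)$ both incomplete and transferral-free.

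The main task is to rule out the divisibility barrier by exhibiting the transferral $\mathbf{u}_j - \mathbf{u}_i \in L^\mu_\mathcal{P}(J_r)$ for arbitrary $i \neq j$, contradicting transferral-freeness. For each pair $(u,v) \in V_i \times V_j$, Corollary~\ref{corollary:extendtwovertices} yields at least $(\alpha n)^{r-1}$ cliques $C \in K_{r-1}(H)$ disjoint from $\{u,v\}$ with $C \cup \{u\}, C \cup \{v\} \in K_r(H)$. Using the inclusion-exclusion estimate from Lemma~\ref{lemma:crucialbound}, each $|V_j| = \Omega(n)$, so summing over such pairs produces $\Omega(n^{r+1})$ triples $(u,v,C)$. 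For each triple, $C \cup \{u\}$ and $C \cup \{v\}$ are $r$-cliques with index vectors $\mathbf{w} + \mathbf{u}_i$ and $\mathbf{w} + \mathbf{u}_j$, where $\mathbf{w} = \mathbf{i}_\mathcal{P}(C) \in \mathbb{Z}^s$. Since there are only $O(1)$ possible values of $\mathbf{w}$, pigeonhole fixes a $\mathbf{w}^*$ realised by $\Omega(n^{r+1})$ triples; after accounting for the $O(n)$ ways a given $r$-clique $e$ can decompose as $C \cup \{u\}$, both $\mathbf{w}^* + \mathbf{u}_i$ and $\mathbf{w}^* + \mathbf{u}_j$ are index vectors of $\Omega(n^r)$ distinct $r$-cliques in $J_r$, hence $\mu$-robust edge vectors (for $\mu$ chosen small enough), and their difference is the desired transferral.

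The main obstacle is executing this double-counting cleanly and fixing the hierarchy $1/n \ll \mu \ll \alpha, 1/r$ so that the pigeonhole step produces strictly more than $\mu n^r$ cliques with each prescribed index vector. The preliminary inequality $f_k(r) \le (k-1)/r$ is routine but essential to translate the codegree hypothesis on $H$ into the Keevash--Mycroft degree condition on $J$.
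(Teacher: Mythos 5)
Your proposal does not prove the statement in question. \cref{theorem:matchingcomplex} is an external black-box result of Keevash and Mycroft (\cite[Theorem 2.12]{KM2015}) which the paper imports without proof; establishing it would require their lattice-based absorption machinery for perfect matchings in hypergraph complexes, none of which appears in your argument. What you have written instead \emph{assumes} \cref{theorem:matchingcomplex} and applies it to the down-closure of $K_r(H)$ to deduce the clique-tiling bound $t_{k-1}(\clique{k}{r}) \leq 1 - f_k(r)$. That is a proof of \cref{lemma:cliquetilingbounds}, not of \cref{theorem:matchingcomplex}, and as a proof of the latter it is circular.

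Taken as a proof of \cref{lemma:cliquetilingbounds}, your argument follows the paper's own proof of that lemma almost exactly: the same complex $J$ with $J_r = K_r(H)$, the same verification that the codegree of $H$ yields the degree-sequence hypothesis, and the same mechanism for ruling out a divisibility barrier, namely using \cref{corollary:extendtwovertices} to produce $\Omega(n^{r-1})$ common $(r-1)$-clique extensions of any $u \in V_i$, $v \in V_j$ and then pigeonholing on index vectors to exhibit two $\mu$-robust edge vectors differing by $\mathbf{u}_i - \mathbf{u}_j$. One caution on the degree verification: you only check $\delta_i(J)$ for $i \leq k-1$, reading the hypothesis literally as ``$0 \le i < k$''. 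That range is a typo in the paper's transcription of \cite[Theorem 2.12]{KM2015}; the actual theorem requires $\delta_i(J) \ge (1 - i/r + \alpha)n$ for all $0 \le i < r$, and the nontrivial cases are $k \le i < r$, which the paper handles via \eqref{eq:r-degree} and the inequality $i\bigl(\binom{r-1}{k-1} + \binom{r-2}{k-2}\bigr) \ge r\binom{i}{k-1}$. Your check of the single case $i = k-1$ via $f_k(r) \le (k-1)/r$ is a special case of this but does not cover the levels above $k$, which must be verified for the application to be legitimate.
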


\begin{proof}[Proof of Lemma~\ref{lemma:cliquetilingbounds}]
	It suffices to prove that, given any $\alpha >0$, for $n$ sufficiently large and divisible by $r$, any $k$-graph $H$ on $n$ vertices and $\delta(H) \geq (1 - f_k(r) + 3 \alpha) n$ has a $\clique{k}{r}$-factor.
	We select $1/n \ll \mu \ll \alpha, 1/r$ according to the relations in Theorem~\ref{theorem:matchingcomplex}.
	Given a $k$-graph $H$ satisfying the degree conditions, set $J_r:=K_r(H)$ and let $J$ be the down-closure of $J_r$. Our aim is to find a perfect matching in $J_r$. To do so, we first estimate the degree sequence of $J$. By \eqref{eq:r-degree}, we see that for each $k-1 \leq i < r$,
	\begin{align*}
		\delta_i(J) & \geq  n - i - \tbinom{i}{k-1} (n - i - \delta(H))
		\geq n - i - \tbinom{i}{k-1} (n - i - (1 - f_k(r) +3\alpha)n)  \\
		& \ge n - i - \tbinom{i}{k-1}\left( f_k(r) + 2 \alpha \right)n 
		\ge\big( 1 - \tbinom{i}{k-1} f_k(r) + \alpha \big) n \ge  \left(1-\frac ir + \alpha \right)n,
	\end{align*}
	where the last inequality follows from the fact that
	\[ i({\tbinom{r-1}{k-1} + \tbinom{r-2}{k-2}})\ge r\tbinom{i}{k-1},\]
	which in turn is easy to check for $i=0$ and $i=r-1$ and thus holds for all $0\le i< r$ since  ${i}/{\binom{i}{k-1}}$ is decreasing  in $i>0$.
	So the conditions of Theorem~\ref{theorem:matchingcomplex} are fulfilled.
	
	Now, let  $\mathcal{P}$ be any partition of $V(J)$ into $d \leq r$ parts, each of size at least $\delta_{r-1}(J) - \mu n$, such that $L^{\mu}_{\mathcal{P}}(J_r)$ is incomplete (so in particular, $d\ge 2$).
	We only need to show that $\mathcal{P}$ is not  transferral-free. Then, by Theorem~\ref{theorem:matchingcomplex}, $J_r$  has a perfect matching, and we are done.
	
	\cref{corollary:extendtwovertices} shows that for distinct  $V_1, V_2\in\mathcal{P}$ and $v_1 \in V_1, v_2 \in V_2$,
	there are at least $\alpha n^{r-1}$ edges $K$ in $J_{r-1}$ such that $K \cup \{v_1\}$ and $K \cup \{v_2\}$ form edges in $J_r$,
	call these edges  \emph{$(v_1, v_2)$-cliques}.
	At most $c:=\binom{r+d-2}{d-1}$ vectors in $\mathbb{Z}^d$  are $(r-1)$-vectors, that is, vectors  corresponding to the index vector of an $(r-1)$-set in $V(J)$.
	So by the pigeonhole principle, there is an $(r-1)$-vector $\mathbf{i}_{v_1, v_2} \in \mathbb{Z}^d$ such that $\mathbf{i}_{\mathcal{P}}(K) = \mathbf{i}_{v_1, v_2}$ for at least $\alpha c^{-1} n^{r-1}$ many $(v_1, v_2)$-cliques $K$.
	Since $|V_1|, |V_2| \geq \delta_{k-1}(J) - \mu n \geq n/k$,
	there are at least $(n/k)^2$ choices for $(v_1, v_2) \in V_1 \times V_2$.
	By the pigeonhole principle, there is a set $F \subseteq V_1 \times V_2$ with $|F|\ge c^{-1}(n/k)^2$, and an $(r-1)$-vector $\mathbf{i}^\ast \in \mathbb{Z}^d$ such that $\mathbf{i}_{w_1, w_2} = \mathbf{i}^\ast$ for all  $(w_1, w_2) \in F$.    Hence there are at least $|F| \alpha c^{-1} n^{r-1}/n \geq  \alpha n^{r} / (c^2k^2) \geq \mu n^{r}$ edges in~$J_r$ with index vector  $\mathbf{i}^\ast + \mathbf{u}_1$. Thus  $\mathbf{i}^\ast + \mathbf{u}_1$ is a $\mu$-robust edge vector.
	The same holds for $\mathbf{i}^\ast + \mathbf{u}_2$, and hence $(\mathbf{i}^\ast + \mathbf{u}_1) - (\mathbf{i}^\ast + \mathbf{u}_2) = \mathbf{u}_1 - \mathbf{u}_2 \in L^\mu_{\mathcal{P}}(H)$.
	So $L^\mu_{\mathcal{P}}(H)$ is not transferral-free, which is what we needed to show.
\end{proof}

\subsection{Almost spanning factors which avoid bad edges}

Now we carry out step \ref{item:matchingsteps-irregular} of our proof of \cref{lemma:uniformlydensematching}.
This is given by the following lemma, which states that any $k$-graph $G$ of $d$-degree slightly above the threshold guaranteeing an $H$-factor has  an almost spanning $H$-matching  which avoids any small set of `bad edges' fixed beforehand.

\begin{lemma} \label{lemma:Hfactoravoiding}
	Let $1/n \ll a \ll \gamma, 1/r, b$ and $1 \leq d < k \leq r$, let
	$H$ be an $r$-vertex $k$-graph,
	let $G$ be an $n$-vertex $k$-graph and let $I \subseteq G$.
	If $\delta_{d}(G) \geq ( t_d(H) + \gamma ) \binom{n-d}{k-d}$ and $|I| \leq an^k$, then $G \setminus I$ has an $H$-matching which covers all but at most $b n$ vertices.
\end{lemma}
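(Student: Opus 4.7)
The plan is to reduce Lemma~\ref{lemma:Hfactoravoiding} to the $H$-factor theorem (as embodied by the definition of $t_d(H)$) via a random-subset argument followed by a cleanup of the $d$-sets on which $I$ is locally dense. First, I would randomly sample $W \subseteq V(G)$ by including each vertex independently with probability $p := 1 - b/2$. Standard Chernoff bounds combined with a union bound over $d$-subsets imply that with positive probability $|W| = (p \pm o(1))n$ and $\delta_d(G[W]) \geq (t_d(H) + 2\gamma/3)\binom{|W|-d}{k-d}$.

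Next, one controls the contribution of $I$. The averaging identity $\sum_S \deg_I(S) = \binom{k}{d}|I| \leq \binom{k}{d}an^k$ bounds by $O(an^d/\gamma)$ the number of \emph{bad} $d$-subsets $S \subseteq V(G)$, defined by $\deg_I(S) > (\gamma/4)\binom{n-d}{k-d}$. For every non-bad $S \subseteq W$, Chernoff applied to the binomial $\deg_{I[W]}(S)$ yields $\deg_{I[W]}(S) \leq (\gamma/3)\binom{|W|-d}{k-d}$ uniformly w.h.p. Combined with the previous step, each non-bad $S \subseteq W$ then satisfies $\deg_{G[W] \setminus I}(S) \geq (t_d(H) + \gamma/3)\binom{|W|-d}{k-d}$, which is comfortably above the $H$-factor threshold.

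It remains to delete a small set $X \subseteq W$, with $|X| \leq bn/2$, that covers every bad $d$-set contained in $W$; after trimming $|W\setminus X|$ modulo $r$ by removing at most $r-1$ further vertices, the $H$-factor theorem applied to $G[W\setminus X]\setminus I$ produces an $H$-factor of $W\setminus X$, hence an $H$-matching in $G \setminus I$ covering at least $(1-b)n$ vertices of $V(G)$, as required.

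I expect the construction of $X$ to be the main obstacle. A pure cardinality bound is not sufficient, since the worst-case vertex cover of the bad $d$-hypergraph could in principle be large. To get $|X| \leq bn/2$ one has to use structural information about $I$: notably, every vertex of a bad $d$-set $S$ satisfies $\deg_I(v) \geq \deg_I(S)$, and more generally every $j$-subset of $S$ inherits a lower bound on $\deg_I$, so one can set up a cascading cleanup across the levels $j = 1, \dotsc, d-1$ that removes, at each step, vertices with abnormally large $\deg_I$-values on the $j$-shadow of $I$. This reduces the task to the $d=1$ case, where direct averaging gives $O(an/\gamma)$ vertices to remove, and the hypothesis $1/n \ll a \ll \gamma, 1/r, b$ ensures that the total deletion stays within the $bn/2$ budget.
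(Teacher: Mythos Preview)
Your approach has a genuine gap: the set $X$ you need does not exist in general. Take $k=4$, $d=3$, let $T_1,\dotsc,T_m$ be $m=\lfloor n/4\rfloor$ pairwise disjoint $2$-sets in $V(G)$, and let $I$ consist of all $4$-edges containing some $T_i$. Then $|I|=O(n^3)\leq an^4$ for large $n$, so the hypothesis $|I|\leq an^k$ is satisfied. Every $3$-set $S\supseteq T_i$ has $\deg_I(S)=n-3$ and is bad, while every $3$-set containing no $T_i$ has $\deg_I(S)\leq 3$. To destroy all bad $3$-sets inside $W$ you must ensure that no $T_i$ lies entirely in $W\setminus X$; since the $T_i$ are disjoint and, after your random sampling with $p=1-b/2$, roughly $(1-b/2)^2 m\approx n/4$ of them survive in $W$, this forces $|X|\gtrsim n/4$, which exceeds $bn/2$ for any $b<1/2$. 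Your cascading cleanup does not rescue this: at level $1$ every vertex has $\deg_I(v)=\Theta(n^2)$ uniformly, so none stands out; at level $2$ the heavy $2$-sets are precisely the $T_i$, and hitting $\Theta(n)$ pairwise disjoint pairs still costs $\Theta(n)$ vertices. (The random sampling of $W$ is also doing no work in your outline: it discards vertices uniformly and does nothing to target bad sets.) The fundamental issue is that $G\setminus I$ need not have minimum $d$-degree above the factor threshold on any subset of $(1-b)n$ vertices, so reducing to the bare definition of $t_d(H)$ via vertex deletion cannot succeed.

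The paper avoids this obstruction entirely by invoking the rainbow $H$-factor theorem of Coulson, Keevash, Perarnau and Yepremyan. Each edge of $I$ receives a random colour from a palette $C$ of size $\Theta(an/\mu)$, while every edge outside $I$ gets its own unique colour; this colouring is $\mu$-bounded with positive probability. A rainbow $H$-factor of $G$ then exists, and since it is rainbow, at most $|C|$ of its $H$-copies use a colour from $C$ (hence an edge of $I$). Deleting those copies leaves an $H$-matching in $G\setminus I$ missing only $r|C|=O(an/\mu)\leq bn$ vertices. The key point is that the rainbow constraint bounds the number of $H$-copies touching $I$ directly, without ever needing $G\setminus I$ itself to satisfy a minimum-degree condition.
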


To prove \cref{lemma:Hfactoravoiding} we will use a result by Coulson, Keevash, Perarnau and Yepremyan~\cite{CKPY2020}.
Given an edge-colouring of a $k$-graph $G$, a subgraph $F \subseteq G$ is \emph{rainbow} if all edges of $F$ have different colours.
An edge-colouring of a $k$-graph $G$ on $n$ vertices is \emph{$\mu$-bounded} if, for every colour $\alpha$,
\begin{enumerate}
	\item at most $\mu n^{k-1}$ edges use colour $\alpha$, and
	\item for every $S$ of $k-1$ vertices,
	at most $\mu n$ edges of colour $\alpha$ contain $S$.
\end{enumerate}

\begin{theorem}[{\cite[Theorem 1.2]{CKPY2020}}] \label{theorem:rainbowfactor}
	Let $1/n \ll \mu \ll \gamma, 1/r$ and $1 \leq d < k \leq r$ with $n$ divisible by $r$, let $H$ be a $k$-graph on $r$ vertices,
	and let $G$ be a $k$-graph on $n$ vertices. If
	$\delta_{d}(G) \geq ( t_d(H) + \gamma ) \binom{n-d}{k-d}$,
	then any $\mu$-bounded edge-colouring of $G$ admits a rainbow $H$-factor.
\end{theorem}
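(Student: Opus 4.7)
The plan is to apply \cref{theorem:rainbowfactor} to $G$ under a carefully chosen edge-colouring that confines all edges of $I$ to a relatively small number of colour classes. Since a rainbow $H$-factor uses each colour at most once, such a factor must contain few edges of $I$, and deleting the handful of $H$-copies that meet $I$ will leave an $H$-matching in $G \setminus I$ covering nearly every vertex.

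To set up, I would first delete at most $r-1$ vertices from $G$ so that the remaining vertex count is divisible by $r$, preserving the $d$-degree condition (with, say, $\gamma/2$ in place of $\gamma$) and only decreasing $|I|$. Set $\mu := a^{1/2}$; by choosing $a$ small enough relative to $\gamma$, $1/r$ and $b$ (permitted by the hierarchy $a \ll \gamma, 1/r, b$), I may assume $\mu \ll \gamma, 1/r$ and $2r\mu \le b/2$. I then construct a $\mu$-bounded edge-colouring in two stages. In the first stage, I greedily assign colours from $\{1, \dotsc, L_I\}$ with $L_I := \lceil 2an/\mu \rceil$ to the edges of $I$: when processing an edge $e \in I$, a class $c$ is declared forbidden if either $|I_c| = \mu n^{k-1}$ (there are at most $|I|/(\mu n^{k-1}) \le an/\mu$ such classes) or $\deg_{I_c}(S) = \mu n$ for some $(k-1)$-subset $S \subset e$ (at most $\deg_I(S)/(\mu n) \le 1/\mu$ such classes for each of the $k$ subsets $S$). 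Since $L_I > an/\mu + k/\mu$ for $n$ large, a valid class always exists. In the second stage, I colour the edges of $G \setminus I$ using fresh colours in the same greedy fashion, introducing $O(n/\mu)$ additional classes. The combined colouring is $\mu$-bounded.

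Applying \cref{theorem:rainbowfactor} then yields a rainbow $H$-factor $F$. Since $F$ is rainbow and all edges of $I$ have colours in $\{1,\dotsc,L_I\}$, we have $|F \cap I| \le L_I \le 2a^{1/2}n$. Discarding from $F$ every $H$-copy that contains an edge of $I$ removes at most $2a^{1/2}n$ copies, covering at most $2ra^{1/2}n$ vertices. The surviving $H$-matching lies in $G \setminus I$ and covers at least $n - 2ra^{1/2}n - (r-1) \ge n - bn$ vertices, as required.

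The main technical point is verifying that the greedy colouring of $I$ never runs out of valid classes, which follows from the counting above once $\mu$ is set to $a^{1/2}$. Everything else is routine bookkeeping around the constant hierarchy, and I do not expect any conceptual obstacle beyond the right choice of $\mu$.
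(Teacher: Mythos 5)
What you have written is not a proof of the stated theorem. The statement in question is \cref{theorem:rainbowfactor} itself: under the degree hypothesis, \emph{every} $\mu$-bounded edge-colouring of $G$ admits a rainbow $H$-factor. Your argument opens with ``apply \cref{theorem:rainbowfactor} to $G$ under a carefully chosen edge-colouring'', i.e.\ it invokes the very theorem to be proved as a black box, so as a proof of \cref{theorem:rainbowfactor} it is circular. Moreover, the conclusion you derive (an $H$-matching in $G\setminus I$ covering all but $bn$ vertices) is not the conclusion of \cref{theorem:rainbowfactor}; it is the conclusion of \cref{lemma:Hfactoravoiding}. Nothing in your write-up engages with what \cref{theorem:rainbowfactor} actually asserts; in the paper this is an external result quoted from Coulson, Keevash, Perarnau and Yepremyan, whose proof requires genuinely different machinery (absorption for rainbow structures), and the paper does not reprove it.

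That said, read as a blind proof of \cref{lemma:Hfactoravoiding}, your argument is essentially sound and runs parallel to the paper's: restrict to a vertex set divisible by $r$, confine $I$ to roughly $an/\mu$ colour classes, give $G\setminus I$ harmless colours, extract a rainbow $H$-factor, and delete the at most one-$H$-copy-per-$I$-colour that meets $I$. The one genuine difference is that you build the $\mu$-bounded colouring of $I$ \emph{deterministically} by a greedy argument (forbidding classes that are saturated globally or through some $(k-1)$-subset of the current edge), whereas the paper colours $I$ uniformly at random and verifies $\mu$-boundedness via Chernoff bounds (\cref{lemma:concentration}) and a union bound. Your counting of forbidden classes ($an/\mu + k/\mu < \lceil 2an/\mu\rceil$ for large $n$) is correct, and your handling of the constants via $\mu := a^{1/2}$ is a legitimate alternative to the paper's choice $a = \mu b/(28r)$; the greedy route avoids probability altogether at no real cost. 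But none of this addresses the statement you were actually asked to prove.
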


\begin{proof}[Proof of \cref{lemma:Hfactoravoiding}]
	Let $\mu$ be   given by \cref{theorem:rainbowfactor} for input $\gamma, r$.
	Set $a = \mu b / (28r)$,
	and let $n$ be sufficiently large. Let $G$ and $H$ be given, as well as $I\subseteq G$ with $|I| \leq an^k$, as in the lemma.
	We need to show that $G \setminus I$ has an $H$-matching which covers all but   
	at most $bn$ vertices.
	First, remove at most $r-1$ arbitrarily chosen  vertices from $G$ to pass to a subgraph $G'$ such that $|V(G')|=m$ is divisible by $r$.
	Set $I'= I\cap G'$ and note that $|I'| \leq a n^k \leq 2 a m^k$.
	
	We define an edge-colouring $c$ of $G'$ as follows.
	Each edge of $I$ gets a colour chosen independently and uniformly at random from a set $C$ with  $|C|= \lceil 14 a m/\mu \rceil$.
	Each edge of $G \setminus I$ gets a fresh and unique colour which is not in $C$.
	
	We claim that, with non-zero probability, $c$ is $\mu$-bounded.
	Since each colour $\alpha \notin C$ is used for exactly  one edge, we only need to check the properties for the remaining colours.
	Let $\alpha \in C$ and let $I_\alpha \subseteq I$ be the edges  of colour $\alpha$.
	Then $\expectation[|I_\alpha|] = |I|/|C| \leq 2a m^k / |C| \le \mu m^{k-1} / 7$.
	Hence, by Lemma~\ref{lemma:concentration}\ref{lemma:happychernoff},
	\begin{align}\label{eins}
		\probability[|I_c| \geq \mu m^{k-1}]
		& \leq 
		\exp(-\mu m^{k-1}) <  \exp(-\mu (n-r)^{k-1}) <\frac{1}{2|C|},
	\end{align}
	where the last inequality holds by the choice of $n$.
	Similarly, 
	let $d_\alpha(S)$ be the number of edges of $I_\alpha$ containing $S$, for each $(k-1)$-set $S$ and $\alpha\in C$.
	Then we see that 
	$\expectation[d_\alpha(S)] \leq m/|C| \leq \mu/(14a) \leq \mu m / 7$,
	and therefore, by Lemma~\ref{lemma:concentration}\ref{lemma:happychernoff},
	\begin{align}\label{zwei}
		\probability[d_c(S) \geq \mu m]
		& \leq 
		\exp(-\mu m) < \frac{1}{2m^{k-1}|C|}.
	\end{align}
	Estimates~\eqref{eins} and~\eqref{zwei}, together with a union bound, show that with non-zero probability, $c$ is $\mu$-bounded, as claimed.
	
	So we can assume $c$ is $\mu$-bounded. Apply \cref{theorem:rainbowfactor} to see that $G'$ has a rainbow $H$-factor~$F$.
	Obtain $F'$ from $F$ by removing all copies of $H$ that use a colour from $C$. Since $F$ is rainbow, there are at most $|C|$ such copies, and thus $F'$ covers all but  at most $r|C|\le 15arm/\mu \leq bm/2$ vertices from $G'$.
	In  $G$, the number of uncovered vertices is at most $bm/2 + r \leq bn$, as required.
\end{proof}

\begin{remark} \label{remark:almost}
	We finish by remarking that we can replace the use of Lemmas \ref{lemma:cliquetilingbounds} and \ref{lemma:Hfactoravoiding} with a different approach.
	Lo and Markström~\cite[Corollary 1.7]{LoMarkstrom2013} showed that for every $\eps > 0, \alpha > 0$, and sufficiently large $n$, every $n$-vertex $k$-graph $H$ with $\delta(H) \geq (1 - \binom{r-1}{k-1}^{-1} + \alpha) n$ admits a collection of vertex-disjoint copies of $K^k_r$ which leave at most $\eps n$ vertices uncovered.
	Then, to prove \cref{lemma:Hfactoravoiding}, one can follow the approach of Ferber and Kwan~\cite[Lemma 3.3]{FerberKwan2022}.
	Partition the vertex set of $H$ randomly into $n/Q$ pieces of size $Q$ each, where $Q$ is a large  constant.
	It is possible to show (cf.~\cite[Lemma 3.4]{FerberKwan2022}) that for almost every set $S$ in such a partition, the  (relative) minimum codegree of
	$(H \setminus I)[S]$ is close to  $\delta(H)$.
	So the result of Lo and Markström  gives an almost-perfect matching in each $(H \setminus I)[S]$. The union of these matchings is an almost-perfect matching in $H \setminus I$.
\end{remark}

\section{Hypergraphs with bounded tree-width}
{In this section we give the proof of Theorem~\ref{theorem:main-treewidth}.
	First, we will need to recall or adapt some facts about absorbers originally used in~\cite{PSS2020},
	then we combine this with the new tools developed in Sections \ref{section:connections}--\ref{section:uniformlydense}.}

\subsection{Absorbing structures}
We now introduce an absorption technique for hypergraphs which will allow us to complete an embedding of almost all of a spanning tree to a full embedding.
We start by introducing our absorbing structure, after a quick definition.
If $H$ is an $r$-graph and $v\in V(H)$, write $H(v)$ for the $(r-1)$-hypergraph consisting of all $f\in\partial_{r-1}(H)$ with $f\cup\{v\}\in H$ {(usually called the \emph{link-graph of $H$ at $v$}).}

\begin{definition}[Absorbing $X$-tuple]
	Let $r\ge 3$ and let $X$ be an $(r-1)$-tree on $h\ge r-1$ vertices, with a fixed valid ordering $x_1, \dotsc, x_{h}$.
	For an $r$-tree $T
	$, we say that an $(h + 1)$-tuple $(v_1, \dotsc, v_{h}, v^\ast)$ of vertices of $T$ is an \emph{$X$-tuple} if
	\begin{enumerate}[\normalfont{(\roman*)}]
		\item $V(T(v^\ast)) = \{ v_1, \dotsc, v_{h} \}$, and
		\item the map $x_i \mapsto v_i$ is a hypergraph isomorphism between $X$ and $T(v^\ast)$.
	\end{enumerate}
	Let $H$ be an $r$-graph on $n$ vertices and let $v_1, \dotsc, v_r\in V(H)$ be  distinct. An \emph{absorbing $X$-tuple for $(v_1, \dotsc, v_r)$} is an $(h + 1)$-tuple $(u_1, \dotsc, u_h, u^\ast)$ of vertices of $H$ such that
	\begin{enumerate}[(A)]
		\item \label{Xtuple:1} $\{ v_1, \dotsc, v_{r-1}, u^\ast \} \in H$, and
		\item \label{Xtuple:2} there exists a copy $\tilde{X}$ of $X$ on $\{ u_1, \dotsc, u_h \}$ such that $\tilde{X} \subseteq H(v_r) \cap H(u^\ast)$.
	\end{enumerate}
	We write $\Lambda_X(v_1, \dotsc, v_r)$ for the set of absorbing $X$-tuples for $(v_1, \dotsc, v_r)$, and  let $\Lambda_X$ be the union of $\Lambda_X(v_1, \dotsc, v_r)$ over all distinct $v_1, \dotsc, v_r\in V(H)$. 
	For an embedding  $\varphi: V(T) \rightarrow V(H)$  and $u_1, \dotsc, u_{h}, u^\ast\in V(H)$,
	we say that $(u_1, \dotsc, u_{h}, u^\ast)$ is \emph{$X$-covered by $\varphi$} if there exists an $X$-tuple $(v_1, \dotsc, v_h, v^\ast)$ of vertices of $T$ such that $\varphi(v^\ast) = u^\ast$ and $\varphi(v_i) = u_i$ for all $i \in [h]$.
\end{definition}
We will need some auxiliary results from~\cite{PSS2020}. {The first result allows us to complete a partial embedding of an $r$-tree to a full embedding of an $r$-tree, provided the partial embedding has sufficiently many $X$-covered absorbing tuples.}

\begin{lemma}[{\cite[Lemma 9.2]{PSS2020}}] \label{lemma:finishingembedding} Let $n\ge h\ge r\ge 3$ and let $0<\delta <\gamma$.
	Let $T$ be an $r$-tree on $n$ vertices with a valid ordering of $V(T)$ given by $v_1, \dotsc, v_n$,  and let $T_0 = T[ \{ v_1, \dotsc, v_{n'} \}]$ be an $r$-subtree of $T$ on $n' \ge (1 - \delta)n$ vertices.
	Let $K$ be an $r$-graph on $n$ vertices, and suppose there are an embedding $\varphi_0:V(T_0)\to V(K)$, an $(r-1)$-tree $X$ on $h$ vertices and a set $\mathcal A \subseteq \Lambda_X$ with the following properties:
	\begin{enumerate}[\normalfont{(\roman*)}]
		\item the tuples in $\mathcal A$ are pairwise vertex-disjoint, \label{itm:absorbingtuples-disjoint}
		\item every tuple in $\mathcal A$ is $X$-covered by $\varphi_0$, and \label{itm:absorbingtuples-covered}
		\item  $|\Lambda_X(v_1, \dotsc, v_r) \cap \mathcal A| \ge \gamma n$ for all tuples such that $\{v_1,\dotsc,v_{r-1}\}\in \partial_{r-1}(K)$ and $v_r\not\in\{v_1,\dotsc,v_{r-1}\}$.  \label{itm:absorbingtuples-many}
	\end{enumerate}	
	Then there exists an embedding of $T$ in $K$.
\end{lemma}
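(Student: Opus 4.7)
The plan is to extend $\varphi_0$ iteratively to the remaining vertices $v_{n'+1},\dots,v_n$, processed in the valid ordering of $T$, using one absorbing tuple from $\mathcal{A}$ at each step to perform a local swap. The $n-n'\le\delta n$ free vertices in $V(K)\setminus\varphi_0(V(T_0))$ will be used, one per step, as destinations for the displaced apices of the swaps.

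I would maintain inductively an embedding $\varphi_t\colon T[\{v_1,\dots,v_{n'+t}\}]\to V(K)$ that agrees with $\varphi_0$ outside the $t$ previously displaced apices. At step $t+1$, set $v=v_{n'+t+1}$ and let $\{w_1,\dots,w_{r-1}\}$ be the $\varphi_t$-image of the remaining $r-1$ vertices of the parent edge of $v$; by property~(T2) this $(r-1)$-set is contained in some earlier edge of $T$ already embedded, so $\{w_1,\dots,w_{r-1}\}\in\partial_{r-1}(K)$. I then pick any unused $s\in V(K)\setminus\varphi_0(V(T_0))$ and, using hypothesis~(iii), choose an unused $\tau=(u_1,\dots,u_h,u^\ast)\in\Lambda_X(w_1,\dots,w_{r-1},s)\cap\mathcal{A}$. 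Letting $(\hat v_1,\dots,\hat v_h,\hat v^\ast)$ be the $X$-tuple of $T$ witnessing that $\tau$ is $X$-covered by $\varphi_0$, I define $\varphi_{t+1}$ by setting $\varphi_{t+1}(v)=u^\ast$ and $\varphi_{t+1}(\hat v^\ast)=s$, leaving all other values unchanged.

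To justify this swap, the new edge incident to $v$ maps to $\{w_1,\dots,w_{r-1},u^\ast\}\in E(K)$ by~(A), while every edge of $T$ containing $\hat v^\ast$ has its remaining vertices in $\{\hat v_1,\dots,\hat v_h\}$ and maps to $\{s\}\cup e'$ for some edge $e'$ of the copy $\tilde X$ of $X$ on $\{u_1,\dots,u_h\}$ supplied by~(B), and hence lies in $E(K)$. Property~(i) ensures the tuples in $\mathcal{A}$ are pairwise vertex-disjoint in $V(K)$ (forcing the corresponding $X$-tuples in $T$ to be disjoint as well), so no swap affects the $X$-cover of any still-available tuple. Feasibility follows by counting: at step $t+1$ at most $t\le n-n'\le\delta n$ tuples have been consumed, but $|\Lambda_X(w_1,\dots,w_{r-1},s)\cap\mathcal{A}|\ge\gamma n$ by~(iii), so at least $(\gamma-\delta)n>0$ tuples remain available. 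After $n-n'$ steps $\varphi_{n-n'}$ is a bijection $V(T)\to V(K)$ and an embedding of $T$ into $K$.

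The main delicate point I expect is label-coherence of the swap: one must check that the copy $\tilde X$ supplied by~(B) corresponds to the labeled image of the link of $\hat v^\ast$ under the canonical correspondence $\hat v_i\leftrightarrow u_i$ inherited from the $X$-cover, so that each link edge of $\hat v^\ast$ in $T$ does map to an edge of $K$ after relocating $\hat v^\ast$ to $s$. This compatibility is precisely what the definitions of $X$-tuple and absorbing $X$-tuple are crafted to enforce, which is why the absorbers are structured around a fixed $(r-1)$-tree $X$ with a prescribed vertex ordering rather than around an abstract isomorphism type.
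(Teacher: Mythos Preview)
The paper does not prove this lemma; it is quoted without proof from \cite[Lemma~9.2]{PSS2020}. Your proposal is exactly the standard absorption argument used there and is correct: process the leftover vertices $v_{n'+1},\dots,v_n$ in the valid order, and at each step free the apex $u^\ast$ of an unused absorber by relocating its current preimage $\hat v^\ast$ to a fresh vertex $s\in V(K)\setminus\operatorname{im}\varphi_t$, with (A) certifying the new edge at $v$ and (B) certifying all edges through $\hat v^\ast$; the count $\gamma n>\delta n\ge t$ guarantees an unused absorber is always available.

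The label-coherence point you flag is indeed the one nontrivial verification, and your reading is the intended one: the fixed valid ordering $x_1,\dots,x_h$ of $X$ together with the ordering of the tuple $(u_1,\dots,u_h,u^\ast)$ pins down the copy $\tilde X$ in (B) as the labelled copy $x_i\mapsto u_i$, which is precisely the $\varphi_0$-image of $T(\hat v^\ast)$ via the $X$-cover isomorphism $x_i\mapsto\hat v_i\mapsto u_i$. Hence every edge of $T(\hat v^\ast)$ maps into $K(s)$ after the swap, as required.
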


{The second result we need finds a suitable family of vertex-disjoint absorbing tuples.}

\begin{lemma}[Lemma 9.5 in~\cite{PSS2020}] \label{lemma:usingchernoff}
	Let $1/n \ll \beta'\ll\beta, 1/r, 1/h$ with $h \ge r \ge 2$.
	Let $K$ be an $r$-graph  on $n$ vertices and let $X$ be an $(r-1)$-tree on $h$ vertices.
	If $|\Lambda_X(v_1, \dotsc, v_r)| \ge \beta n^{h + 1}$ for every $r$-tuple $(v_1, \dotsc, v_r)$ of distinct vertices of $K$, then there is a set $\mathcal A \subseteq \Lambda_X$ of at most $\beta' n$ pairwise disjoint $(h + 1)$-tuples of vertices of~$K$ such that $|\Lambda_X(v_1, \dotsc, v_r) \cap \mathcal A| \ge \beta \beta' n / 8$ for every $r$-tuple $(v_1, \dotsc, v_r)$ of distinct vertices of~$K$.
\end{lemma}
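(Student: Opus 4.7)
The plan is to apply the probabilistic deletion method. I would first sample a random subfamily $\mathcal{A}_0 \subseteq \Lambda_X$ by including each $(h+1)$-tuple independently with probability $p = \beta'/(2n^h)$, and then delete one tuple from every intersecting pair to obtain a pairwise disjoint family $\mathcal{A} \subseteq \mathcal{A}_0$. Since $|\Lambda_X| \leq n^{h+1}$, this gives $\expectation[|\mathcal{A}_0|] \leq \beta' n/2$, so Markov's inequality yields $|\mathcal{A}_0| \leq \beta' n$ with probability at least $1/2$. For each fixed $r$-tuple $(v_1, \dotsc, v_r)$ of distinct vertices, the hypothesis implies $\expectation[|\Lambda_X(v_1, \dotsc, v_r) \cap \mathcal{A}_0|] \geq p\beta n^{h+1} = \beta\beta' n/2$.

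Next I would apply Chernoff (Lemma~\ref{lemma:concentration}\ref{lemma:sadchernoff} with $\varepsilon = 1/2$): each count $|\Lambda_X(v_1, \dotsc, v_r) \cap \mathcal{A}_0|$ is at least $\beta\beta' n/4$ except with probability at most $\exp(-\beta\beta' n / 24)$. Since $1/n \ll \beta'\beta$, a union bound over the at most $n^r$ target $r$-tuples shows that this lower bound holds simultaneously for all of them with probability $1 - o(1)$.

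For the cleanup step, I would bound the number of ordered pairs of distinct tuples in $\Lambda_X$ sharing at least one vertex by $(h+1)^2 n^{2h+1}$ (choose the shared vertex, its position in each tuple, and the remaining entries), so the expected number of such pairs with both members in $\mathcal{A}_0$ is at most $p^2 (h+1)^2 n^{2h+1} = (\beta')^2 (h+1)^2 n/4$. Markov then provides an absolute upper bound of $(\beta')^2 (h+1)^2 n/2$ with probability at least $1/2$, and a final union bound produces a realization in which all three events hold. Deleting one tuple from each intersecting pair reduces each count $|\Lambda_X(v_1, \dotsc, v_r) \cap \mathcal{A}_0|$ by at most $(\beta')^2 (h+1)^2 n/2$, while preserving $|\mathcal{A}| \leq \beta' n$.

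The only mildly delicate point is the final parameter balance: $p$ must be small enough that both $|\mathcal{A}_0|$ and the number of intersecting pairs stay within their targets, yet large enough that concentration still supplies the lower bound $\beta\beta' n/4$ uniformly over $n^r$ targets. The assumption $\beta' \ll \beta, 1/h$ is exactly what guarantees that the deletion loss $(\beta')^2 (h+1)^2 n/2$ does not exceed $\beta\beta' n/8$, leaving a final count of at least $\beta\beta' n/4 - \beta\beta' n/8 = \beta\beta' n /8$ for every target, matching the constant in the statement.
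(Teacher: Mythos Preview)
Your approach is the standard probabilistic deletion argument and is essentially correct; note that the paper does not actually prove this lemma (it is quoted from~\cite{PSS2020} with a bare \qedsymbol), though the paper does carry out an identical argument in the proof of \cref{lem:absorbingtuples:cycles}.

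There is one small arithmetic slip worth tightening. You bound the three bad events by probabilities $\le 1/2$, $o(1)$, and $\le 1/2$ respectively, and then take a union bound; but $1/2 + o(1) + 1/2$ is not strictly below $1$, so as written you cannot conclude that a good realisation exists. The fix is trivial: either take $p = \beta'/(4n^h)$ so that Markov gives failure probability $\le 1/4$ for the size of $\mathcal{A}_0$ (and similarly sharpen the pair count), or---as the paper does in its analogous proof---replace the Markov bound on $|\mathcal{A}_0|$ by the Chernoff bound of \cref{lemma:concentration}\ref{lemma:happychernoff}, which gives an exponentially small failure probability. Either adjustment leaves your final constant $\beta\beta' n/8$ intact.
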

The third result we need shows that every $r$-tuple has many absorbing tuples in a suitable host graph $H$.
The next lemma and its proof resembles that of Proposition 9.4 in~\cite{PSS2020}, but requires some changes which we give for completeness.

\begin{lemma} \label{proposition:absorbingtuples}
	Let $1/n \ll \beta \ll \alpha, 1/h, 1/k, 1/r$ with $h\ge r \ge k \ge 2$. Let $H$ be an $n$-vertex $k$-graph with $\delta(H)\ge (1-f_k(r)+\alpha)n$,
	let $X$ be an $h$-vertex $(r-1)$-tree, 	and  let $v_1, \dotsc, v_r\in V(H)$ be distinct with
	$\{v_1,\dotsc,v_{r-1}\}\in K_{r-1}(H)$.
	Then $|\Lambda_X(v_1, \dotsc, v_r)| \ge \beta n^{h + 1}$.
\end{lemma}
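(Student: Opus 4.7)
The plan is to count absorbing $X$-tuples directly via a greedy construction that follows a fixed valid ordering $x_1,\dotsc,x_h$ of $X$. I will choose the $(h+1)$-tuple in three stages: first $u^\ast$, then the vertices $u_1,\dotsc,u_{r-1}$ realising the initial edge $\{x_1,\dotsc,x_{r-1}\}$ of $X$, and finally $u_r,\dotsc,u_h$ one at a time in the order inherited from the valid ordering of~$X$. Throughout, I want the image $\tilde X$ to live inside the $(r-1)$-graph $K_r(H)(v_r)\cap K_r(H)(u^\ast)$, which together with condition \ref{Xtuple:1} is precisely what the definition of an absorbing $X$-tuple requires.

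For the first stage, condition \ref{Xtuple:1} asks that $\{v_1,\dotsc,v_{r-1},u^\ast\}\in K_r(H)$, and since $\{v_1,\dotsc,v_{r-1}\}$ is an $(r-1)$-clique, \eqref{eq:r-degree} yields $\Omega(n)$ valid choices of $u^\ast$, of which $\Omega(n)$ still avoid $\{v_1,\dotsc,v_r\}$. For the second stage, I need $(u_1,\dotsc,u_{r-1})$ so that both $\{u_1,\dotsc,u_{r-1},v_r\}$ and $\{u_1,\dotsc,u_{r-1},u^\ast\}$ are $r$-cliques in $H$; since $u^\ast\ne v_r$, \cref{corollary:extendtwovertices} applied to the pair $(v_r,u^\ast)$ delivers $(\alpha n)^{r-1}$ choices of the underlying $(r-1)$-set, hence $\Omega(n^{r-1})$ ordered tuples (any ordering works at this step since $\{x_1,\dotsc,x_{r-1}\}$ is a single edge). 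For the third stage, suppose $u_1,\dotsc,u_{r+i-3}$ are already placed and let $D$ be the image of $E_i\setminus\{x_{r+i-2}\}$, an $(r-2)$-set contained in an earlier edge. I look for a new vertex $u$ so that both $D\cup\{u,v_r\}$ and $D\cup\{u,u^\ast\}$ are $r$-cliques. Only the $k$-subsets of these two $r$-sets that contain $u$ can fail, and a Pascal identity (as in the proof of \cref{lemma:crucialbound}) counts exactly $\binom{r-1}{k-1}+\binom{r-2}{k-2}$ relevant $(k-1)$-subsets, each forbidding at most $(f_k(r)-\alpha)n$ vertices. Using the defining identity $f_k(r)\cdot\bigl(\binom{r-1}{k-1}+\binom{r-2}{k-2}\bigr)=1$, the total loss is at most $\bigl(1-\alpha(\binom{r-1}{k-1}+\binom{r-2}{k-2})\bigr)n$, leaving $\Omega(n)$ good choices for $u$ even after avoiding the $O(h)$ vertices already used.

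Multiplying over the three stages yields $\Omega(n)\cdot\Omega(n^{r-1})\cdot\Omega(n^{h-r+1})=\Omega(n^{h+1})$ absorbing tuples, which exceeds $\beta n^{h+1}$ for $n$ large. The main obstacle is not the counting itself but the bookkeeping: at each greedy extension step the newly chosen vertex must be distinct from all previously chosen vertices (including $v_1,\dotsc,v_r$ and $u^\ast$), and the resulting map $x_i\mapsto u_i$ must be a genuine hypergraph isomorphism between $X$ and $\tilde X$, which requires that each new edge of $\tilde X$ be built on top of the specific earlier vertices prescribed by the valid ordering of $X$. Since in every step we only need to avoid $O(h)$ extra vertices from a pool of $\Omega(n)$ candidates, this causes no real difficulty for large $n$.
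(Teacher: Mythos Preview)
Your proof is correct. Both your argument and the paper's start the same way: pick $u^\ast$ extending $\{v_1,\dotsc,v_{r-1}\}$ to an $r$-clique, then work inside the $(r-1)$-graph $K':=K_r(H)(v_r)\cap K_r(H)(u^\ast)$, using \cref{corollary:extendtwovertices} to see that $K'$ has at least $(\alpha n)^{r-1}$ edges. The divergence is in how the copies of $X$ in $K'$ are found. The paper observes that $X$, being an $(r-1)$-tree, is $(r-1)$-partite and hence has Tur\'an density zero, and then invokes a supersaturation lemma to turn the edge count into $\Omega(n^h)$ copies of $X$. You instead notice (implicitly) that $K'$ enjoys a codegree-type property: for every $(r-2)$-set $D$ in its shadow, the two $(r-1)$-cliques $D\cup\{v_r\}$ and $D\cup\{u^\ast\}$ intersect in $r-2$ vertices, so \cref{lemma:crucialbound} with $j=r-1$ gives at least $\alpha n$ common extensions---which is exactly your inclusion--exclusion count. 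From this one embeds $X$ greedily along its valid ordering.

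Your route is more elementary (no supersaturation needed) and yields the explicit lower bound $(\alpha n - O(h))^{h+1}$ rather than a soft $\beta n^{h+1}$; the paper's route is shorter to write and makes no use of the tree structure of $X$ beyond $(r-1)$-partiteness. One small remark: your claim that ``only the $k$-subsets containing $u$ can fail'' silently uses that $D\cup\{v_r\}$ and $D\cup\{u^\ast\}$ are already $(r-1)$-cliques in $H$, which follows because $D$ lies in an earlier edge of $\tilde X\subseteq K'$; it would not hurt to say this explicitly.
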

\begin{proof}
	{
		Let $K = K_r(H)$.
		We construct an absorbing $X$-tuple for $(v_1, \dotsc, v_r)$ in two steps.
		First, we select an arbitrary vertex $u^\ast \in N_K( \{ v_1, \dotsc, v_{r-1} \} ) \setminus \{ v_r \}$. (By our condition on $\delta(H)$ and \eqref{eq:r-degree}, there are at least $\alpha n$ choices for $u^\ast$.)
		Next, consider the $(r-1)$-graph $K' := K( v_r ) \cap K( u^\ast )$.
		By \cref{corollary:extendtwovertices}, $K'$ contains at least $(\alpha n)^{r-1}$ edges.
		Since $X$ is an $h$-vertex $(r-1)$-tree, it is an $(r-1)$-partite $(r-1)$-graph.
		In particular, its Tur\'an density is zero, and so we get many copies of $X$ in $K'$ using a supersaturation argument (see, e.g. \cite[Lemma 2.1, Theorem 2.2]{Keevash2011}).
		More precisely, and using $\beta \ll \alpha$,
		we have at least $2 \beta n^{h}$ copies of $X$ in $K'$, for any choice of $u^\ast$.}
	
	{For each different choice of $u^\ast$ and each different copy of $X$ in $K( v_r ) \cap K( u^\ast )$ we get a different gadget in $\Lambda_X(v_1, \dotsc, v_r)$.
		Thus, with the $\alpha n$ possible ways to choose $u^\ast$, this yields $|\Lambda_X(v_1, \dotsc, v_r)| \ge \beta n^{h + 1}$.}
\end{proof}	

\subsection{Embedding trees}

We need three more results from~\cite{PSS2020} that will help us with embedding $T$ from Theorem~\ref{theorem:main-treewidth} into $K_r(H)$.

{The first lemma ensures that for each small $r$-tree $T$, there exists an $(r-1)$-tree $X$ such that $T$ can be embeded while $X$-covering tuples in some host graph $H$.
	The lemma follows easily by concatenating  Proposition 9.3 and Lemma 9.7 of~\cite{PSS2020}.}

\begin{lemma}\label{lemma:coveringtuples}
	Let $1/n \ll \beta \ll \nu \ll \gamma, 1/h' \ll 1/\Delta, r$ with $\Delta, r\ge 2$, and let $\ell\geq r$.	Let $T$ be an $r$-tree on $\nu n$ vertices with $\Delta_1(T)\leq \Delta$.	Then there exists an $(r-1)$-tree $X$ on $h \leq h'$ vertices with the following property:
	
	If $K$ is any $(\gamma, \ell)$-connecting $r$-graph on $n$ vertices, and $\mathcal A \subseteq \Lambda_X$ consists of at most $\beta n$ pairwise disjoint $(h+1)$-tuples of vertices of $K$ which are $X$-tuples;
	then for any $\mathbf v \in\partial^\circ(K)$ and $\mathbf v'\in \partial^\circ( T)$, there is an embedding $\phi: V(T) \rightarrow V(K)$ such that $\phi(\mathbf v')=\mathbf v$ and every tuple in $\mathcal A$ is $X$-covered by~$\phi$.
\end{lemma}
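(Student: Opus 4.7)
The plan is to combine a structural lemma about small $r$-trees with the iterative embedding tool of \cref{lemma:embeddingtrunk}, mirroring the combination of [PSS2020, Prop.~9.3 and Lem.~9.7]. The first step is to choose $X$: since $T$ has bounded maximum degree $\Delta$ and there are only finitely many isomorphism types of $(r-1)$-trees on at most $h'$ vertices, I would chop $T$ into small blocks by iterating \cref{lemma:smallsubtree} and, via pigeonhole on the isomorphism type of the link-graph $T(v^\ast)$ of suitable deep vertices, extract a fixed $(r-1)$-tree $X$ on $h\le h'$ vertices that appears as the induced subtree $T(v^\ast)$ for at least $\Omega(\nu n / h')$ vertex-disjoint choices of $v^\ast$. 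This produces both $X$ and a large pool $\mathcal{A}_T$ of ``internal'' $X$-tuples of $T$ that are legitimate candidates for being mapped onto elements of $\mathcal{A}$.

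Next I would root $T$ at $\mathbf{v}'$ and use \cref{lemma:flattening} to obtain a layering $\mathcal{L} = (L_1, \dotsc, L_m)$. Since $|\mathcal{A}| \le \beta n \ll \nu n/(2h')$, I can reserve inside $T$ a subcollection $\mathcal{A}_T' \subseteq \mathcal{A}_T$ of $|\mathcal{A}|$ vertex-disjoint $X$-tuples, together with an arbitrary bijection $\sigma : \mathcal{A}_T' \to \mathcal{A}$, spreading the reservations so that no layer of $\mathcal{L}$ carries too many of them and consecutive reserved tuples are separated by at least $\ell$ layers. Then I would embed $T$ in $K$ block by block, using \cref{lemma:embeddingtrunk} on the sub-layerings of $T$ between consecutive reserved tuples. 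On reaching a block carrying a reserved tuple $(v_1,\dotsc,v_h,v^\ast) \in \mathcal{A}_T'$, the embedding is steered to map $(v_1,\dotsc,v_h,v^\ast)$ onto $\sigma((v_1,\dotsc,v_h,v^\ast))$, using the $(\gamma,\ell)$-connecting property of $K$ to produce a walk that lands on the prescribed $(r-1)$-tuple at which the reserved image attaches. Doing this for every tuple of $\mathcal{A}_T'$ ensures that $\phi$ $X$-covers every tuple of $\mathcal{A}$ while also satisfying $\phi(\mathbf{v}') = \mathbf{v}$, which is imposed as the initial boundary condition at layer $1$.

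The main obstacle is synchronising the iterative embedding with these prescribed images, which sit on arbitrarily scattered vertex sets of $K$. One needs, at the correct moment and in the correct layer of $\mathcal{L}$, to hit a specific $(r-1)$-tuple of $K$; the $(\gamma,\ell)$-connecting property provides $\gamma^q n^q$ walks of controlled length between any pair of tuples in $\partial^\circ(K)$, and the small set of previously used vertices (of order $\nu n + h|\mathcal{A}|$) can be removed from $K$ while losing only a $o(1)$-fraction of these walks, because $\beta \ll \nu \ll \gamma$. A secondary technicality is that the $X$-tuples of $\mathcal{A}$ are not themselves sub-structures of $K$ fitting any chosen layering of $K$ a priori; this is resolved by using the freedom in \cref{lemma:embeddingtrunk} to choose the $(r-1)$-tuples on which blocks attach from large sets $\mathbf{F}_1,\mathbf{F}_2$, so each individual reserved tuple of $\mathcal{A}$ can be aligned with the layering on demand.
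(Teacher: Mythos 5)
Your overall plan — extract $X$ by pigeonhole on the isomorphism type of the links $T(v^\ast)$ (which are $(r-1)$-trees of bounded size since $\Delta_1(T)\le\Delta$), fix a layering of $(T,\mathbf v')$, reserve $|\mathcal A|$ disjoint $X$-tuples of $T$, and embed piece by piece using connections to steer each reserved tuple onto its prescribed image — is the same strategy as the one the paper relies on (it cites the concatenation of Proposition~9.3 and Lemma~9.7 of~\cite{PSS2020} rather than giving a proof). However, the step you call a ``secondary technicality'' is in fact the heart of the lemma, and your proposed resolution does not work as stated. \cref{lemma:embeddingtrunk} requires $|\mathbf F_1|,|\mathbf F_2|\ge\delta n^{r-1}$, so its freedom in choosing attachment tuples from \emph{large} sets cannot be used to hit one \emph{specific} $(h+1)$-tuple of $\mathcal A$; for that you must fall back on the raw $(\gamma,\ell)$-connecting property, which only delivers walks ending at a prescribed $(r-1)$-tuple of $\partial^\circ(K)$, not at an $(h+1)$-tuple.

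The genuine gap is the following. To $X$-cover $(u_1,\dotsc,u_h,u^\ast)\in\mathcal A$ you must map an entire closed neighbourhood $\{v_1,\dotsc,v_h,v^\ast\}=N_T(v^\ast)\cup\{v^\ast\}$ onto $\{u_1,\dotsc,u_h,u^\ast\}$. The hypothesis $\mathcal A\subseteq\Lambda_X$ only guarantees that $K$ contains, on the target set, the cone over a copy $\tilde X$ of $X$ with apex $u^\ast$ — every guaranteed edge contains $u^\ast$. But $T$ generally has edges meeting $\{v_1,\dotsc,v_h\}$ that do \emph{not} contain $v^\ast$ (for a tight path, every edge straddling the window $\{v_{i-r+1},\dotsc,v_{i+r-1}\}$ is of this kind), and their images must be edges of $K$ incident to the arbitrary prescribed vertices $u_i$ — something neither the connecting property nor $\Lambda_X$ gives you directly. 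The only way these edges can be realised is if they are exactly the edges of the connecting walks that enter and leave the gadget, i.e.\ the walks must terminate precisely at ordered $(r-1)$-tuples which are edges of $\tilde X$ (these do lie in $\partial^\circ(K)$ since $\tilde X\subseteq K(u^\ast)$), and the portions of $T$ hanging off the reserved $X$-tuple must be decomposed, via \cref{lemma:cutting1} and the layering, into subtrees rooted at exactly those link-edges. Your sketch never establishes that the reserved $X$-tuples can be positioned so that their entire boundary in $T$ is accounted for in this way, and without that the embedding of the blocks adjacent to a reserved tuple cannot be completed. This structural bookkeeping is precisely the content of the two results from~\cite{PSS2020} that the paper invokes, and it needs to be supplied rather than asserted.
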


Given an $r$-graph $H$ on $n$ vertices and $\theta > 0$, we say an edge $e \in H$ is \emph{$\theta$-extensible} if $e$ lies in at least $\theta \binom{n-r}{r}$ copies of $K^{(r)}(2)$ in $H$. {Most edges are extensible:}

\begin{lemma}[{\cite[Lemma 6.5]{PSS2020}}]\label{lem:extensible}
	Let $1/n, \theta \ll \varepsilon, 1/r$.
	In any $r$-graph on $n$ vertices, all but at most $\eps \binom{n}{r}$ edges are $\theta$-extensible.
\end{lemma}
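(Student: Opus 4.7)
The plan is to proceed by contradiction combined with a double counting argument, where the non-trivial input is a supersaturation bound for copies of $K^{(r)}(2)$ in dense $r$-graphs. Let $B \subseteq E(H)$ be the set of edges that are \emph{not} $\theta$-extensible, so by definition each $e \in B$ lies in fewer than $\theta \binom{n-r}{r}$ copies of $K^{(r)}(2)$ in $H$. Suppose for contradiction that $|B| > \varepsilon \binom{n}{r}$.

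Since $K^{(r)}(2)$ is $r$-partite, its Tur\'an density is zero, and a standard supersaturation result (such as the hypergraph Erd\H{o}s--Simonovits theorem; see, e.g., \cite[Theorem 2.2]{Keevash2011}) guarantees the existence of a constant $c = c(\varepsilon, r) > 0$ such that the $r$-graph $B$ on $n$ vertices, having density at least $\varepsilon$, contains at least $c\, n^{2r}$ labelled copies of $K^{(r)}(2)$. I would then double count pairs $(e, K)$ where $K$ is a copy of $K^{(r)}(2)$ in $H$ and $e$ is one of its $2^r$ edges, with the restriction $e \in B$. Summing over $e \in B$ first gives the upper bound
\[
 \sum_{e \in B} \bigl|\{K \subseteq H : e \in E(K)\}\bigr| \;<\; |B|\cdot\theta\binom{n-r}{r} \;\le\; \theta\binom{n}{r}\binom{n-r}{r} \;\le\; \frac{\theta\, n^{2r}}{(r!)^{2}}.
\]
Summing over $K$ first, every copy of $K^{(r)}(2)$ entirely contained in $B$ contributes exactly $2^r$ to the sum, so the left-hand side is at least $2^r c\, n^{2r}$. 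Combining these yields $\theta \ge 2^r (r!)^2 c(\varepsilon, r)$, which contradicts the hierarchy $\theta \ll \varepsilon, 1/r$ once the constants are chosen appropriately.

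The main potential obstacle is making sure that the supersaturation constant $c(\varepsilon, r)$ is indeed bounded below uniformly in $n$, and that $n$ is large enough for supersaturation to apply; this is ensured by the hierarchy $1/n \ll \varepsilon, 1/r$ in the lemma statement. Everything else is essentially routine counting, and the use of $K^{(r)}(2)$ being $r$-partite is crucial since otherwise supersaturation would not kick in at constant density.
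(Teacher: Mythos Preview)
Your argument is correct. Note that the paper does not supply its own proof of this lemma: it is quoted from~\cite{PSS2020} and used as a black box. The supersaturation plus double-counting approach you give is the natural way to establish the statement, and indeed matches what one expects the original proof in~\cite{PSS2020} to be. One minor cosmetic point: when you pass from ``$c\,n^{2r}$ labelled copies'' to ``each copy contributes $2^r$'', you are implicitly switching from labelled to unlabelled copies, which costs a factor of $|\mathrm{Aut}(K^{(r)}(2))|$; this is harmless since that factor depends only on $r$ and is absorbed into the constant, but it is worth saying explicitly.
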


{Finally, the following lemma shows that we can embed any almost-spanning bounded-degree $r$-tree in a connecting $r$-graph, given a reservoir and a uniformly dense perfect matching.
	We can also embed the root of such $r$-tree into a specified extensible edge.}

\begin{proposition}[{\cite[Proposition 10.2]{PSS2020}}]\label{proposition:almost-spanning}
	Let $\Delta, r\ge 2$, 
	let $\theta \ll 1/M$, and 
	let  $1/n \ll \mu \ll \theta \ll \varepsilon, \delta \ll \alpha, \gamma, 1/r, 1/\Delta$.
	Let $\ell\geq r$ and let $K$ be a {$(\gamma,\ell)$-connecting} $r$-graph on $n$ vertices with a $(\delta,\mu)$-reservoir $W$.
	Let $\mathcal M$ be
	an $(\varepsilon,d)$-uniformly dense perfect matching of $K-W$ with $|\mathcal M|\le M$. If $(T,\mathbf x)$ is a rooted $r$-tree with $|V(T)|\le (1-\alpha)n$  and $\Delta_1(T) \leq \Delta$, then $K$ contains $T$. Moreover,  for any $\theta$-extensible edge $e$, any $\mathbf v\in\partial^\circ(K)$ with $\mathbf v\subseteq e$ can be chosen as the image~of~$\mathbf x$. 
\end{proposition}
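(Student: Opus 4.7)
The plan is to embed $T$ into $K$ iteratively, decomposing $T$ into a bounded number of small layered subtrees and alternating between two kinds of moves: embedding the bulk of each small subtree inside a single edge of the uniformly dense matching $\mathcal{M}$ (which is flexible thanks to the density), and bridging between consecutive subtrees through tight walks whose interiors lie in the reservoir $W$ (which is flexible thanks to $K$ being $(\gamma,\ell)$-connecting). First, I would invoke Lemma~\ref{lemma:flattening} to obtain a layering $\mathcal{L}$ of $(T,\mathbf{x})$, then iterate Lemma~\ref{lemma:smallsubtree}, with an auxiliary parameter $\gamma' \ll \alpha/M$, to produce layered pieces $T_0, T_1, \ldots, T_s$ such that each $T_i$ attaches to $T_0 \cup \ldots \cup T_{i-1}$ through a single $\mathcal{L}$-layered tuple $\mathbf{s}_i$, each $T_i$ has between roughly $\gamma' n/(2\Delta)$ and $\gamma' n$ edges, and $s = O_{\gamma',\Delta}(1)$.

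For the base case, I would embed $\mathbf{x}$ onto $\mathbf{v} \subseteq e$; since $e$ is $\theta$-extensible, it lies in many copies of $K^{(r)}(2)$, supplying the initial flexibility to start the first trunk. At stage $i$ of the iteration, I maintain a partial embedding $\phi_i$ of $T_0 \cup \ldots \cup T_{i-1}$ with a designated end-tuple $\mathbf{v}_i \in \partial^\circ(K)$, ensuring that each matching edge class $V_a^j$ retains at least $(1-\alpha/2)|V_a^j|$ unused vertices. To attach $T_i$, I first choose by pigeonhole (since $|\mathcal{M}| \le M$ and total usage stays $O(s\gamma' n) \ll \alpha n/M$) a matching edge $(V_1^j, \ldots, V_r^j) \in \mathcal{M}$ whose classes have ample unused room. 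Using $(\varepsilon,d)$-uniform density, I build a family $\mathbf{F}_2 \subseteq V_1^j \times \ldots \times V_{r-1}^j$ of at least $\delta n^{r-1}$ candidate tuples where the bulk of $T_i$ can be continued. I then apply Lemma~\ref{lemma:embeddingtrunk} with $T_I$ being the first $\ell$ layers of $T_i$, $\mathbf{F}_1$ a suitable thickening of $\{\mathbf{v}_i\}$, $\mathbf{F}_2$ as above, and $U = W$; this embeds the trunk as a tight walk of length $\ell$ routed through the reservoir and ending inside the target matching edge. The remaining vertices of $T_i$ are embedded greedily inside $V_1^j \cup \ldots \cup V_r^j$ layer by layer, using that uniform density supplies many valid extensions as long as each $V_a^j$ still retains at least $\varepsilon|V_a^j|$ unused vertices.

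The main obstacle is keeping three budgets simultaneously in balance throughout the iteration: the reservoir must retain its $(\delta,\mu)$-quasirandom character despite losing $O(s\ell)$ vertices to bridging walks, which works because $\mu \ll 1/(s\ell)$; each matching edge class $V_a^j$ must retain at least $\varepsilon|V_a^j|$ unused vertices whenever it is revisited, guaranteed by $\gamma' \ll \alpha/M$ and by routing each $T_i$ entirely into a single matching edge; and after each bridging step the realized end-tuple must land inside the intended matching edge at the correct $\mathcal{L}$-positions. The most delicate point is the construction of $\mathbf{F}_2$ at each stage: it has to lie in the $\mathcal{L}$-positions prescribed by $T_i$, be rich enough to supply the $\delta n^{r-1}$ end-tuples required by Lemma~\ref{lemma:embeddingtrunk}, and be such that every tuple of $\mathbf{F}_2$ still admits a greedy extension into the bulk of $T_i$ through uniform density. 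The $\alpha n$ slack coming from the hypothesis $|V(T)| \le (1-\alpha)n$ is what absorbs the residual losses incurred in reservoir walks and at the boundaries of matching edges, allowing the entire iteration to close up.
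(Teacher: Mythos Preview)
The paper does not actually prove this proposition: it is quoted verbatim from \cite[Proposition~10.2]{PSS2020}, and the only comment the present paper makes is the remark immediately following the statement, explaining that in \cite{PSS2020} the hypothesis ``$(\gamma,\ell)$-connecting'' was the stronger ``$(\gamma n,W)$-large'', and that the original proof goes through unchanged once one observes (via Lemma~\ref{lemma:strengthenedconnectinglemma}) that largeness is only used to obtain connectivity inside the reservoir. So there is no proof here to compare against.

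That said, your sketch is faithful to the strategy of the original argument in \cite{PSS2020}: layer the tree, iterate Lemma~\ref{lemma:smallsubtree} to chop it into $O(1)$ small layered pieces, embed the bulk of each piece inside a single edge of the uniformly dense matching $\mathcal{M}$, and bridge between consecutive pieces through the reservoir using Lemma~\ref{lemma:embeddingtrunk}. Your bookkeeping on reservoir depletion, matching-edge occupancy, and the role of the $\alpha n$ slack is correct in spirit.

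One point in your outline would need more care to close. In your invocation of Lemma~\ref{lemma:embeddingtrunk} you take $\mathbf{F}_1$ to be ``a suitable thickening of $\{\mathbf{v}_i\}$'', but that lemma requires $|\mathbf{F}_1|\ge\delta n^{r-1}$ and its conclusion only guarantees $\phi(\mathbf{s}_i)\in\mathbf{F}_1$, not $\phi(\mathbf{s}_i)=\mathbf{v}_i$. Since in your scheme the attachment tuple $\mathbf{s}_i$ has already been embedded at the specific location $\mathbf{v}_i$ before you start on $T_i$, simply enlarging $\mathbf{F}_1$ does not help. The way \cite{PSS2020} handles this is by \emph{not} committing to a single end-tuple when finishing the previous piece: one stops the greedy embedding of $T_{i-1}$ a few layers short of the cut, so that (by uniform density in the current matching edge, or by $\theta$-extensibility at the very start) there are $\Omega(n^{r-1})$ admissible images for the boundary tuple, and it is this family that serves as $\mathbf{F}_1$. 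Your ``most delicate point'' paragraph correctly flags $\mathbf{F}_2$, but the symmetric issue for $\mathbf{F}_1$ is where the real subtlety lies.
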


We remark that in the statements corresponding to \cref{lemma:coveringtuples} and \cref{proposition:almost-spanning} which appear on~\cite{PSS2020}, the hypothesis of being $(\gamma,\ell)$-connecting is actually replaced by the stronger hypothesis of being $(\gamma n,W)$-large.
However, the proofs of the corresponding results only use the fact of being $(\gamma n, W)$-large to ensure that every $(r-1)$-set of host graph $K$ has enough neighbours into the reservoir.
Then we use Lemma~\ref{lemma:strengthenedconnectinglemma} inside the reservoir in order to show that $K[W]$ is $(\gamma/2,\ell)$ connecting for $\ell\ge (2r+1)\lfloor{r/2}\rfloor+2r$.
After this is done the proofs do not need any other alteration, so indeed the statements as written here remain true.

\subsection{Proof of Theorem~\ref{theorem:main-treewidth}}
We can finally prove Theorem~\ref{theorem:main-treewidth}.

\begin{proof}[Proof of Theorem~\ref{theorem:main-treewidth}]
	Given $r, k, \Delta, \alpha$ as in the theorem, we choose 
	\[1/n_0 \ll \mu \ll\theta\ll 1/M_0 \ll\gamma,d \ll  \varepsilon\ll \delta'\ll \delta\ll\alpha, 1/r, 1/\Delta.\]
	We are given $G$, $T$ and $H$.
	Since $G \subseteq \partial_k (T)$, in order
	to show that $G \subseteq H$ it is clearly enough to show that $T \subseteq K_r(H)$, which is our task from now on.
	
	We begin by choosing the useful structures in the $r$-tree $T$.
	{Choose an arbitrary root $\mathbf{r}$ of $T$,
		and find a layering $(T,\mathbf{r},\mathcal{L})$ using \cref{lemma:flattening}.
		Now, use \cref{lemma:smallsubtree} to find $\mathbf{x} \in \partial^\circ(T)$, and an $r$-subtree $T' = T_{\mathbf{x}}$ of $T$, such that $|T'|= \nu n$, where $\mu \ll \nu \ll \delta'$.}
	{Apply \cref{lemma:coveringtuples} to $T'$ to obtain an $(r-1)$-tree $X$ with the properties in that statement and say $X$ has $h$ vertices.}
	
	Now we find useful structures in $K_r(H)$. We begin by finding the absorbing tuples.
	Use Lemmas~\ref{lemma:usingchernoff} and~\ref{proposition:absorbingtuples} with suitable $\beta, \beta'$ fulfilling $\delta\ll \beta'\ll\beta\ll\alpha$ to find  a set $\mathcal A \subseteq \Lambda_X$ of at most $\beta' n$ pairwise disjoint $(h + 1)$-tuples of vertices of~$K_r(H)$ such that $|\Lambda_X(v_1, \dotsc, v_r) \cap \mathcal A| \ge \beta \beta' n / 8>\delta n$ for every $r$-tuple $(v_1, \dotsc, v_r)$ of {distinct vertices such that $\{v_1,\dotsc,v_{r-1}\}\in K_{r-1}(H)$.}
	
	Next, we find an extensible tuple and a reservoir in $K_r(H)$.
	Let $H' = H - V(\mathcal{A})$.
	Since $|V(\mathcal{A})| \leq (h+1) \beta' n$ and $\beta' \ll \alpha$,
	we have $\delta(H') \geq \delta(H) - \alpha n/100$.
	Next, we use Lemma~\ref{lem:extensible} to find a $\theta$-extensible edge $e_0$ in~$K_r(H')$ and fix an $(r-1)$-subset $f_0\subseteq e_0$.
	By Lemma~\ref{lem:reservoir}, $K_r(H')$ has a $(\gamma,\mu/2)$-reservoir $W'$.
	Now let $W = W' \setminus f_0$ and $H'' = H - W$.
	Then $\mathcal{A}, e_0 \subseteq H''$,
	and $W$ is a $(\gamma, \mu)$-reservoir for $K_r(H')$.
	Note that $\delta(H'') \ge \delta(H)-\alpha n/50$.
	
	Choose an $r$-tree $T''$ with $T'\subseteq T''\subseteq T$ and $(1-\delta/2)n\ge |T''|\ge (1-\delta)n$ (this can be done by iteratively removing leaves from $T$).
	{If there is an embedding $\varphi_0:V(T'')\to V(K_r(H))$ such that every tuple in $\mathcal{A}$ is $X$-covered by $\varphi_0$, then Lemma~\ref{lemma:finishingembedding} implies that $T\subseteq K_r(H)$, as desired.}
	So it only remains to find $\varphi_0$ of this form.
	
	Since $\delta(H'') \ge \delta(H)-\alpha n/50$,
	\cref{lemma:connectivity} implies that $K_r(H'')$ is $(\alpha/4, \ell)$-connecting for some $\ell$ with $1/\ell \ll 1/k, 1/r$.
	The conditions required by \cref{lemma:coveringtuples} are met by $T', X$ and $K_r(H'')$.
	So we obtain an embedding $\varphi_1$ of $T'$ such that $\phi(\mathbf{x}) = e_0$,
	and such that  every tuple in $\mathcal{A}$ is $X$-covered by $\varphi_1$.
	Set $H^\ast:=H-\varphi_1(V(T')\setminus e_0)$.
	Note that $\delta(H^\ast)\ge \delta(H)-\alpha n/25$.
	
	{By Lemma~\ref{lemma:uniformlydensematching}, $H^\ast-W$ has a subgraph $H''$ with $|H''|\ge (1-\delta')|H^\ast-W|\ge (1-2\delta')n$ such that $K_{r}(H'')$ admits an $(\varepsilon,d)$-uniformly dense matching $\mathcal M$ with $|\mathcal M|\le M_0$.
		By Proposition~\ref{proposition:almost-spanning} and since $\delta '\ll\delta$, there is an embedding $\varphi_2$ of $T''\setminus (T'-f_0)$ in $K_{r}(H^\ast)$ such that $\varphi_2(\mathbf{x})=\varphi_1(\mathbf{x})$.
		Together, $\varphi_1$ and $\varphi_2$ constitute the embedding $\varphi_0$ of $T''$ we were looking for.}
\end{proof}

\section{Powers of tight Hamilton cycles}
In this section, we prove \cref{theorem:main-powcycles}.
We start with a definition.

\begin{definition}[$v$-path-absorbing]
	Let $r\ge k\ge 2$, let $H$ be a $k$-graph, and let $v\in V(H)$. Call a $(2r-2)$-tuple
	$(v_1,\dots,v_{2r-2})\in (V(H)\setminus \{v\})^{2r-2}$ of distinct vertices  \emph{$v$-path-absorbing} if both $v_1\dots v_{2r-2}$ and $v_1\dots v_{r-1}vv_{r}\dots v_{2r-2}$ consist of $(r-k+1)$th powers of a tight Hamilton path.
\end{definition}

We now show that in the situation of Theorem~\ref{theorem:main-powcycles}, there is always a set of pairwise disjoint tuples which are very useful for absorbing vertices.

\begin{lemma}\label{lem:absorbingtuples:cycles}
	Let $1/n\ll\beta\ll \nu \ll \alpha, 1/r$ and $r\ge k\ge 2$, and let $H$ be an $n$-vertex $k$-graph with $\delta(H)\ge (1-f_k(r)+\alpha)n$.
	Then there is a set $\mathcal A\subseteq (V(H))^{2r-2}$ of  pairwise vertex-disjoint  tuples such that $|\mathcal A|\le \nu n$, and for each $v\in V(H)$ there are at least $\beta n$ $v$-path-absorbing tuples in $\mathcal A$. 
\end{lemma}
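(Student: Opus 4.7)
The plan is to follow the standard probabilistic absorbing method. The main step is to show that for every $v \in V(H)$ the number of $v$-path-absorbing $(2r-2)$-tuples is at least $\gamma n^{2r-2}$ for some constant $\gamma = \gamma(\alpha, r, k) > 0$; then a routine random-selection plus deletion argument extracts the desired family $\mathcal{A}$.

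For the richness bound, fix $v \in V(H)$ and build the tuple $(v_1, \dotsc, v_{2r-2})$ greedily in two phases. In the first phase, pick $v_1, \dotsc, v_{r-1}$ with $\{v_1, \dotsc, v_{r-1}, v\} \in K_r(H)$; iterating the observation that any $j$-clique of $H$ with $j \le r-1$ has $(1 - \binom{j}{k-1} f_k(r))n - O(r) = \Omega(n)$ extensions to a $(j+1)$-clique (a direct consequence of~\eqref{eq:r-degree}, since $\binom{j}{k-1} f_k(r) < 1$ for all $j \le r-1$) yields $\Omega(n^{r-1})$ such ordered tuples. In the second phase, iteratively pick $v_i$ for $i = r, r+1, \dotsc, 2r-2$: the new vertex must complete both the non-inserted $r$-set $\{v_{i-r+1}, \dotsc, v_i\}$ and the inserted $r$-set $\{v_{i-r+2}, \dotsc, v_{r-1}, v, v_r, \dotsc, v_i\}$ to $r$-cliques of $H$, which is equivalent to extending two $(r-1)$-cliques $C_1, C_2$ with $|C_1 \cap C_2| = r-2$ to $r$-cliques through a common vertex $v_i$. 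Crucially, both $C_1$ and $C_2$ are guaranteed to be $(r-1)$-cliques by induction, since they are subsets of the two $r$-cliques established at step $i-1$ (or of the initial clique $\{v_1,\dotsc,v_{r-1},v\}$ for $i=r$). For $r > k$, Lemma~\ref{lemma:crucialbound} applied with $j = r-1$ supplies $\ge \alpha n$ valid choices for $v_i$; for $r = k$ the two sets $C_1, C_2$ are merely $(k-1)$-sets and a direct inclusion-exclusion via the codegree bound yields $|N(C_1) \cap N(C_2)| \ge 2\delta(H) - n \ge 2\alpha n$ valid choices. Multiplying over the $r-1$ extension steps (and discarding the $O(r)$ already-used vertices) produces at least $\gamma n^{2r-2}$ ordered $v$-path-absorbing tuples.

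Next, include every ordered $(2r-2)$-tuple of distinct vertices of $V(H)$ independently with probability $p := \nu/(4 n^{2r-3})$ to form a random family $\mathcal{A}_0$. By Chernoff bounds (Lemma~\ref{lemma:concentration}) together with a union bound over $v \in V(H)$, with positive probability we have simultaneously $|\mathcal{A}_0| \le \nu n /2$ and, for every $v$, that $X_v \ge \gamma \nu n / 8$, where $X_v$ is the number of $v$-path-absorbing tuples in $\mathcal{A}_0$. A Markov bound on $\expectation[Y] \le p^2 (2r-2)^2 n^{4r-5} = O_r(\nu^2 n)$ yields, for some constant $C_r = C_r(r)$, that the number $Y$ of intersecting ordered pairs of tuples in $\mathcal{A}_0$ satisfies $Y \le C_r \nu^2 n$. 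Conditioning on all of these events and deleting one tuple from each intersecting pair produces a pairwise vertex-disjoint subfamily $\mathcal{A} \subseteq \mathcal{A}_0$ of size at most $\nu n$ in which each $v$ retains at least $X_v - Y \ge \gamma \nu n / 8 - C_r \nu^2 n \ge \beta n$ $v$-path-absorbing tuples, using $\beta \ll \nu$ sufficiently small with respect to $\gamma / C_r$ (which depends only on $\alpha, r, k$).

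The main obstacle lies in the richness step: identifying at every greedy step the correct pair $C_1, C_2$ of $(r-1)$-cliques (namely those differing in the one vertex that gets swapped with $v$) so that Lemma~\ref{lemma:crucialbound} applies, and handling the edge case $r=k$, where that lemma formally does not apply but a direct codegree-based inclusion-exclusion serves as a substitute.
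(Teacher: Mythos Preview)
Your proof is correct and follows essentially the same two-step approach as the paper: first a greedy counting argument using \eqref{eq:r-degree} and Lemma~\ref{lemma:crucialbound} to show every vertex has $\Omega(n^{2r-2})$ absorbing tuples, then a random-selection-plus-deletion step. The greedy orderings differ only cosmetically (the paper picks $v_2,\dotsc,v_r$, then $v_1$, then $v_{r+1},\dotsc,v_{2r-2}$, whereas you pick $v_1,\dotsc,v_{2r-2}$ in order), and your explicit treatment of the boundary case $r=k$, where Lemma~\ref{lemma:crucialbound} with $j=r-1$ does not formally apply, is in fact more careful than the paper's.

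One small point worth flagging: in the random step the paper samples from $\bigcup_v \mathcal{A}_v$ rather than from all ordered $(2r-2)$-tuples. Both choices prove the lemma as stated, but sampling only from $\bigcup_v \mathcal{A}_v$ guarantees that every tuple in the output $\mathcal{A}$ is $v$-path-absorbing for some $v$, hence in particular a tight path segment in $K_r(H)$. This extra property is what the downstream application in Step~1 of the proof of Theorem~\ref{theorem:main-powcycles} actually uses (the tuples are strung together into a single absorbing path). Your $\mathcal{A}$ may contain ``junk'' tuples that are not path segments; this is harmless for the lemma itself, and is easily fixed by restricting the sampling universe as the paper does.
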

\begin{proof}
	For each $v \in V(H)$, let $\mathcal A_v$ be the set of $v$-path-absorbing $(2r-2)$-tuples.
	We start by showing that
	\begin{equation}\label{manyabsorbingtuplesforv}
		\text{for each $v\in V(H)$, $|\mathcal A_v|\ge (\alpha n)^{2r-2}$.}
	\end{equation}
	{Indeed, let $v \in V(H)$ be arbitrary.
		Initially, let $v_2, v_3, \dotsc, v_{k-1}$ be arbitrary distinct vertices, distinct from $v$ as well.
		Having chosen $v_2, \dotsc, v_{j}$ with $k-1 \leq j < r$, equation \eqref{eq:r-degree} shows that there are at least $\alpha n$ ways to choose $v_{j+1}$ such that $\{v, v_2, \dotsc, v_{j+1} \} \in K_{j+1}(H)$.
		At the end of this process we have defined $v_2, \dotsc, v_r$ such that $\{ v_2, \dotsc, v_{r-1}, v,  v_{r} \} \in K_{r}(H)$.
		Note that $\{v_2, \dotsc, v_{r-1}, v_r\}$ and $\{v_2, \dotsc, v_{r-1}, v \}$ belong to $K_{r-1}(H)$ and intersect in $r-2$ vertices. So, by \cref{lemma:crucialbound}, there are at least $\alpha n$ choices for a vertex $v_1$ such that both $\{ v_1, v_2, \dotsc, v_r \}$ and $\{ v_1, v_2, \dotsc, v_{r-1}, v \}$ are in $K_{r}(H)$.}
	
	{Next, suppose we have defined $v_1, \dotsc, v_j$ for some $r \leq j < 2r - 2$ such that both the sets $\{ v_{j-r+1}, \dotsc, v_j \}$ and $\{ v, v_{j-r+2}, \dotsc, v_j \}$ are in $K_r(H)$.
		Then the two $(r-1)$-sets $\{ v_{j-r+2}, v_{j-r+3}, \dotsc, v_j \}$ and $\{ v, v_{j-r+3}, \dotsc, v_j \}$ are in $K_{r-1}(H)$ and they intersect in $r-2$ vertices.
		\cref{lemma:crucialbound} then implies the existence of at least $\alpha n$ choices of $v_{j+1}$ such that both $\{ v_{j-r+2}, v_{j-r+3}, \dotsc, v_j, v_{j+1} \}$ and $\{ v, v_{j-r+3}, \dotsc, v_j, v_{j+1} \}$ are edges in $K_r(H)$.
		At the end of this process we have found $(v_1, \dotsc, v_{2r-2})$ which is a $v$-path-absorbing tuple.
		Since there were at least $\alpha n$ choices for each $v_i$, we get \eqref{manyabsorbingtuplesforv}.}
	
	Let $\beta \ll c\ll \nu$ and set $p=cn^{-2r+1}$. Consider $\mathcal A^\cup:=\bigcup_{v\in V(H)}\mathcal A_v$. Construct a set $\mathcal A^p\subseteq \mathcal A^\cup$ by independently choosing each element of $\mathcal A^\cup$ with probability~$p$. Note that $\expectation[|\mathcal A^p|]\le cn$ and $\expectation[\mathcal A^p\cap \mathcal A_v]\ge \alpha^{2r-2}c n$ for all $v\in V(H)$. So Lemma~\ref{lemma:concentration}\ref{lemma:happychernoff}--\ref{lemma:sadchernoff}, and the union bound, imply that
	\begin{enumerate}
		\item $\probability[|\mathcal A^p|\ge 7cn]\le \exp (-7c n)<1/4$, and
		\item $\probability[|\mathcal A^p\cap \mathcal A_v|\le \alpha^{2r-2}c n/2 \text{ for some }v\in V(H)]\le n\exp (-\alpha^{2r-2}c n/12)<1/4$,
	\end{enumerate}
	for $n$ sufficiently large.
	
	Let $\mathcal{X} \subseteq (\mathcal{A}^\cup)^2$ be the pairs of absorbing tuples in $\mathcal{A}^\cup$ which share at least one vertex.
	There are clearly at most $(2r-2)n^{4r-5}$ such pairs in $\mathcal{X}$.
	Let $\mathcal{X}^p \subseteq \mathcal{X}$ be the set of pairs such that both of which survived in $\mathcal{A}^p$.
	Clearly $\expectation[|\mathcal{X}^p|] = |\mathcal{X}|p^2 \leq (2r-2)n^{4r-5} p^2 = (2r-2)c^2 n$.
	By Markov's inequality, we have that
	\begin{enumerate}[resume]
		\item $\probability[|\mathcal X^p|\ge (4r-4)c^2n]\le 1/2$.
	\end{enumerate}
	Therefore, there exists a realisation $\mathcal A$ of $\mathcal A^p$ such that $|\mathcal A|\le 7cn\le \nu n$, $|\mathcal A \cap \mathcal A_v|\ge \alpha^{2r-2}cn/2$ for all $v\in V(H)$, and $|\mathcal X^p| < (4r-4)c^2n$.
	By removing one tuple from each pair $\mathcal X^p$, we arrive at a collection of vertex-disjoint tuples $\mathcal A' \subseteq \mathcal A$ such that $|\mathcal A'| \leq \nu n$ and, for each $v \in V(H)$,
	$|\mathcal A' \cap \mathcal A_v| \geq |\mathcal A \cap \mathcal A_v| - |\mathcal X^p| \geq \beta n$, as required.
\end{proof}

\begin{lemma}[Absorbing lemma: power of cycles]\label{lemma:absorbing:cycles}
	Let $1/n \ll 1/r$ and $2\le k\le r$, let $0<\delta<\beta$ and let $n'\ge (1-\delta)n$. Let $X$ be the $(r-k+1)$th power of a tight $k$-cycle on $n'$ vertices, and let $H$ be an $n$-vertex $k$-graph with $X\subseteq H$. Let $\mathcal A\subseteq V(H)^{2r-2}$, and assume that 
	\begin{enumerate}
		\item each tuple in $\mathcal A$ is a segment of length $2r-2$ in $X$, 
		\item the elements of $\mathcal A$ are pairwise vertex-disjoint, and
		\item for all $v\in V(H)$ there are at least $\beta n$ $v$-path-absorbing tuples in $\mathcal A$.
	\end{enumerate}
	Then $H$ contains the $(r-k+1)$th power of a tight Hamilton cycle.
\end{lemma}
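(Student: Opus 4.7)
The plan is to use $\mathcal{A}$ to absorb the set $U := V(H)\setminus V(X)$ of at most $\delta n$ leftover vertices into the cyclic structure of $X$. By hypothesis, every $v \in U$ has at least $\beta n$ $v$-path-absorbing tuples inside $\mathcal{A}$, which are pairwise vertex-disjoint. Since $|U| \leq \delta n < \beta n$, I would process the vertices of $U$ one by one and greedily choose for each $v \in U$ a previously unused $v$-path-absorbing tuple $\phi(v) \in \mathcal{A}$; such a tuple always exists because at any step fewer than $\beta n$ absorbers have been used. This yields an injection $\phi \colon U \to \mathcal{A}$ whose image consists of pairwise vertex-disjoint tuples.

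Next, I would modify the cyclic ordering underlying $X$ in place. For each $v \in U$ with $\phi(v) = (v_1, \dotsc, v_{2r-2})$, which occupies a consecutive length-$(2r-2)$ segment of $X$ by assumption~(i), I replace that segment by the length-$(2r-1)$ sequence $(v_1, \dotsc, v_{r-1}, v, v_r, \dotsc, v_{2r-2})$, inserting $v$ in the central position. Vertex-disjointness of $\phi(U)$ guarantees that these insertions happen at disjoint positions of the cyclic order, so they can be performed independently. The resulting object is a cyclic ordering $\sigma$ of all of $V(H)$; the claim is that $\sigma$ realises $H$ as an $(r-k+1)$th power of a tight Hamilton cycle.

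To verify the claim, I would check that every window $W$ of $r$ consecutive vertices of $\sigma$ spans a clique in $H$. If $W$ contains no vertex of $U$, deleting the inserted vertices from $\sigma$ recovers the original cyclic ordering of $X$, so $W$ appears already as a window of $X$ and is a clique of $X \subseteq H$. If $W$ contains some inserted $v \in U$, then because $v$ lies at the exact centre of its absorbed segment, at position $r$ of a length-$(2r-1)$ block, any window of length $r$ meeting $v$ has starting position in $\{1, \dotsc, r\}$ within that segment and is therefore fully contained in it; the definition of $v$-path-absorbing then gives that this segment is an $(r-k+1)$th power of a tight path in $H$, so $W$ is a clique.

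The main obstacle is essentially bookkeeping: one must ensure that a window containing an inserted vertex cannot straddle the boundary of an absorbed segment. This is precisely where the choice of putting $v$ in the middle of a $(2r-2)$-tuple (rather than near an endpoint) pays off, since the distance $r-1$ from $v$ to either end of the length-$(2r-1)$ block exactly matches the window size minus one, leaving no room for $W$ to leak out. Everything else is immediate from assumptions (i)--(iii) and the definition of $v$-path-absorbing.
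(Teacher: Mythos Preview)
Your proposal is correct and follows essentially the same strategy as the paper's proof: greedily assign to each leftover vertex an unused absorbing tuple (possible since $|U|\le\delta n<\beta n$), then insert each leftover vertex into the centre of its assigned segment. The paper phrases this as an explicit induction, constructing cycles $X_0,X_1,\dotsc$ one insertion at a time while tracking the shrinking family $\mathcal A_i$; your ``all at once'' presentation with the window-by-window verification is just a repackaging of the same argument.
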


\begin{proof}
	Write $V(H-X)=\{x_1,\dotsc, x_{n-n'}\}$. For $0\le i\le n-n'$, we will inductively construct a graph $X_i$ and a collection of vertex-disjoint tuples $\mathcal A_i\subseteq \mathcal A$ such that 
	\begin{enumerate}
		\item\label{lemma:absorbing:subgraph} $X_i$ is a copy of $C_{k,n'+i}^{r-k+1}$, 
		\item\label{lemma:absorbing:segment} every tuple in $\mathcal A_i$ is a segment of length $2r-2$ in $X_i$, and
		\item\label{lemma:absorbing:tuples} for each $x\in\{x_{i+1},\dots, x_{n-n'}\}$ there are at least $\beta n-i$ $x$-path-absorbing tuples in~$\mathcal A_i$.
	\end{enumerate}
	In the final step $i=n-n'$ this construction will give the desired $(r-k+1)$th power of a tight Hamilton cycle.
	
	Set $X_0=X$ and $\mathcal A_0=\mathcal A$. Now, let $0\le i\le m-1$ and assume  we have found $X_i$ and~$\mathcal A_i$. Let $v_1,\dotsc,v_{n'+i}$ be a enumeration of $V(X_i)$.  We construct $X_{i+1}$ by adding $x_{i+1}$ to $X_{i}$ using an $x_{i+1}$-path-absorbing tuple. Because of~\ref{lemma:absorbing:tuples} for $i$, we know there are at least $\beta n-i\ge (\beta-\delta)n>0$ suitable tuples in $\mathcal A_i$, and thus by~\ref{lemma:absorbing:segment} for $i$,  there is  $j\in [n'+i]$ such that $\mathbf v=(v_{j+1},\dots,v_{j+2r-2})\in\mathcal A_i$ is an $x_{i+1}$-absorbing tuple. Set $\mathcal A_{i+1}=\mathcal A_i\setminus \{\mathbf v\}$. 
	Then $X_{i+1}=v_1\dots v_jv_{j+r-1}x_{i+1}v_{j+r}\dots v_{j+2r-2}\dots v_{n'+i}$ is a copy of $C_{k,n'+i+1}^{r-k+1}$, which is as desired for (i), while (ii) and (iii) clearly hold by construction.
\end{proof}
We are now ready to prove Theorem~\ref{theorem:main-powcycles}.

\begin{proof}[Proof of Theorem~\ref{theorem:main-powcycles}] We begin by letting $\ell=5kr!$ and choosing constants
	\[1/{n_0}\ll \mu\ll \gamma\ll\delta\ll \beta\ll\nu\ll
	\alpha/(r\ell)\]
	The proof is divided into three main steps. \\
	
	\noindent\emph{Step 1: Finding an absorbing path.} Let $U\subseteq V(H)$ be an $(\gamma, \mu)$-reservoir for $V(H)$ given by Lemma~\ref{lem:reservoir}. Since $|U|\le 2\gamma n$ and $\gamma\ll \alpha$, we may use Lemma~\ref{lem:absorbingtuples:cycles} in $H \setminus U$ to find a family $\mathcal A=\{\mathbf v_1,\dotsc, \mathbf v_t\}\subseteq (V(H)\setminus U)^{2r-2}$ of  $t\le \nu n$ vertex-disjoint $(2r-2)$-tuples such that for each vertex $v\in V(H)$ there are at least $\beta n$  $v$-path-absorbing tuples in $\mathcal A$. Let  $\mathbf v_i^1,\mathbf v_i^2\in (V(H)\setminus U)^{r-1}$ with $\mathbf v_i=(\mathbf v_i^1,\mathbf v_i^2)$ for $i\in \{1, \dotsc, t\}$.
	
	We now construct
	in $H-U$ the $(r-k+1)$th power of a tight path covering every tuple in $\mathcal A$ by successively applying Lemma~\ref{lemma:connectivity:cycles}. For $1\le i<t$, let us suppose that we have constructed a tight path in $K_r(H)$ covering $\mathbf{v}_1,\dotsc,\mathbf{v}_i$ such that for each $1\le j\le i-1$ there is a path $P_j$ of length at most $\ell$ connecting $\mathbf v_j^2$ with $\mathbf{v}_{j+1}^1$. Since we need to avoid $U$ and the at most $kt\ell\ll\alpha n/4$ already used vertices, we may apply Lemma~\ref{lemma:connectivity:cycles} to find a tight path $P_i$ of length $\ell$ in $K_r(H-(U\cup V(P_1\cup \dots \cup P_{i-1})))$ such that $\sta{P_i}=\mathbf{v}_i^2$ and $\ter{P_i}=\mathbf v_{i+1}^1$.  Then $P=P_1\cup\dotsc\cup P_t$ is the $(r-k+1)$th power of a tight path covering each tuple of $\mathcal A$.\\

	\noindent\emph{Step 2: Finding an almost spanning $(r-k+1)$th power of a tight cycle.} Let $H'=H-(U\cup V(P))$ and note that 
	$\delta(H')\ge\delta(H)-|U|-|V(P)|\ge (1-f_k(r)+\alpha/2)n.$ Using Theorem~\ref{theorem:main-treewidth}, we can thus find in $H'$ a spanning $k$-subgraph $P'$ which is the $(r-k+1)$th power of a tight path. Since every element in $\partial_{k-1}(H)$ has at least 
	$(\delta(H)/n-\mu)|U|\ge (1-f_k(r)+\alpha/2)|U|$
	neighbours in $U$, we can greedily find $\mathbf u_1,\mathbf u_2, \mathbf w_1,\mathbf w_2\in U^{r-1}$ such that $\mathbf u_1P\mathbf w_1$ and $\mathbf u_2P'\mathbf w_2$ are   $(r-k+1)$th powers  of tight paths. Since 
	$\delta(H[U])\ge \left({\delta(H)}/n-\mu\right)|U|\ge (1-f_k(r)+\alpha/2)|U| $,
	we can use Lemma~\ref{lemma:connectivity:cycles} to find two disjoint paths $Q,Q'$ in $K_{r}(H[U])$ such that $\sta{Q}=\mathbf u_1$ and $\ter{Q}=\mathbf u_2$, and $\sta{Q'}=\mathbf w_1$ and $\ter{Q'}=\mathbf w_2$. Thus we have found the $(r-k+1)$th power of a tight cycle of length $n'\ge n-|U|\ge (1-\delta)n$. \\
	
	\noindent\emph{Step 3: Finishing the embedding.} In the previous step we  constructed the $(r-k+1)$th power of a tight cycle on $n'\ge (1-\delta)n$ vertices covering all tuples in $\mathcal A$. So Lemma~\ref{lemma:absorbing:cycles} implies that~$H$ also contains a spanning $(r-k+1)$th power of a Hamilton tight cycle.  
\end{proof}
\section{A variant of  \cref{theorem:main-treewidth}}\label{sec:twvariant}
In this short section, we show the following variant of  \cref{theorem:main-treewidth}.
\begin{theorem} \label{other:main-treewidth}
	For any $r\ge k\ge 2$,  $\Delta > 0$ and $\alpha > 0$, there exists $n_0$ such that  for all $n \geq n_0$, any  $n$-vertex $k$-graph $G$ of tree-width less than $r$ 
	with $\treewidth(G)<r$, and any $n$-vertex $k$-graph  $H$ with 
	$\delta(H) \geq (1 - (\binom{r'-1}{k-1} + \binom{r'-2}{k-2})^{-1} + \alpha)n, $
	where $r':=36(r+1)(k-1) \Delta$, we have $G \subseteq H$.
\end{theorem}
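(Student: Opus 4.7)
The strategy is to reduce \cref{other:main-treewidth} to \cref{theorem:main-treewidth}. The hypothesis of \cref{other:main-treewidth} is weaker in that we only assume $\treewidth(G)<r$ (and implicitly $\Delta_1(G)\leq \Delta$, since otherwise the statement is vacuous) rather than the existence of an $r$-tree admitting $G$ with bounded vertex degree. Correspondingly, the required codegree of $H$ is larger, using $r'=48r(k-1)\Delta$ in place of $r$. This gives exactly the slack needed to replace the (possibly very unbalanced) $r$-tree admitting $G$ with a ``fatter'' $r'$-tree of bounded vertex-degree, to which \cref{theorem:main-treewidth} applies.

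Thus the plan reduces to the following key lemma: \emph{there exists $D=D(r,k,\Delta)$ such that every $k$-graph $G$ with $\treewidth(G)<r$ and $\Delta_1(G)\leq \Delta$ admits an $r'$-tree $T'$ with $G\subseteq \partial_k(T')$ and $\Delta_1(T')\leq D$.} Given this lemma, \cref{other:main-treewidth} is immediate by applying \cref{theorem:main-treewidth} with parameters $r'$ and $D$ in place of $r$ and $\Delta$, since the codegree condition on $H$ matches exactly.

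To prove the lemma, I would pass through the graph shadow $\partial_2(G)$. Any tree decomposition of $G$ is also a tree decomposition of $\partial_2(G)$ (via the Helly property for subtrees of a tree), so $\treewidth(\partial_2(G))<r$, and clearly $\Delta(\partial_2(G))\leq (k-1)\Delta$. The next step is to invoke a graph-theoretic fact: any graph of treewidth $<r$ and maximum degree $\leq D_g$ admits a tree decomposition of width less than $c\, r\, D_g$ (for a suitable absolute constant $c$, taken as $48/(k-1)$ for our bookkeeping) in which every vertex lies in a bounded number $D=D(r,D_g)$ of bags. Applied with $D_g=(k-1)\Delta$, this produces a tree decomposition of $\partial_2(G)$ of width $<r'$ with bounded vertex-appearance. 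By Helly again, every edge of $G$, which is a $k$-clique in $\partial_2(G)$, is contained in some bag, so this is simultaneously a tree decomposition of $G$; re-encoding it as an $r'$-tree yields the required $T'$.

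The main obstacle is the graph-theoretic fattening statement. The discussion around~\cite{DO1995} shows that one cannot achieve bounded vertex-appearance while keeping the optimal bag size, so a blow-up by a factor proportional to $\Delta$ is genuinely necessary. The proof of the graph-theoretic lemma itself proceeds by a standard centroid/heavy-path decomposition argument on a tree decomposition of minimum width: long subtrees $T_v$ of single vertices $v$ are shortened by merging consecutive bags into super-bags of controlled size, using that $v$ has only boundedly many neighbours in $\partial_2(G)$ to track. Tracking both the bag-size blow-up and the appearance count simultaneously is the technical heart of the proof; once this is done, \cref{other:main-treewidth} follows as a one-line consequence of \cref{theorem:main-treewidth}.
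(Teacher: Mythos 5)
Your proposal is correct and follows essentially the same route as the paper: pass to $\partial_2(G)$ (noting $\treewidth(\partial_2(G))=\treewidth(G)$ and $\Delta(\partial_2(G))\le (k-1)\Delta$), convert bounded treewidth plus bounded degree into a bounded-degree $r'$-tree admitting $G$, and then apply \cref{theorem:main-treewidth} with $r'$ in place of $r$. The ``fattening'' lemma you flag as the technical heart is exactly the tree-partition-width theorem of Ding--Oporowski and Wood (\cref{lemma:treepartitionwidth}: $\partial_2(G)\subseteq T\boxtimes K_{24t\Delta}$ for a bounded-degree tree $T$), which the paper simply cites rather than reproves, so your sketched centroid/heavy-path argument can be replaced by that reference.
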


For the proof, we will need a definition and a lemma. Given a graph $G$ and a clique $K_r$ on $r$ vertices, the graph $G \boxtimes K_r$ is obtained from $G$ by replacing every vertex by a clique on $r$ vertices, and each edge is replaced by a complete bipartite graph between the two corresponding cliques. 
The following result was proven by Distel and Wood~\cite{DistelWood2022}.
%

\begin{lemma} \label{lemma:treepartitionwidth-improved}
	Let $G$ be a graph of treewidth $t \geq 1$ with maximum degree at most $\Delta$.
	Then there exists a tree $T$ of degree at most $6 \Delta$ such that $G \subseteq T \boxtimes K_{18(t+1)\Delta}$.
\end{lemma}

\begin{proof}[Proof of Theorem~\ref{other:main-treewidth}]
	Let $G$ be an $n$-vertex $k$-graph with $\treewidth(G) \leq r$ and $\Delta_1(G) \leq \Delta$.
	Then $\Delta(\partial_2(G)) \leq (k-1) \Delta$ and $\treewidth(\partial_2(G))=\treewidth(G)$.
	Apply Lemma~\ref{lemma:treepartitionwidth-improved} to $\partial_2(G)$ to see that $\partial_2(G) \subseteq T \boxtimes K_{18(r+1)(k-1) \Delta}$ for some tree $T$ of maximum degree at most $6 (k-1) \Delta$.
	It is not overly hard to see that $T \boxtimes K_{18(r+1)(k-1)}$ (and therefore $G$) admits an $(r'+1)$-tree $T'$ with $r' < 36(r+1)(k-1) \Delta$ and $\Delta_1(T')
	\leq \Delta(T) 18(r+1)(k-1) \Delta
	\leq 108(r+1)(k-1)^2 \Delta^2$.
	Combined with \cref{theorem:main-treewidth} this yields a copy of $G$ in the $n$-vertex $k$-graph~$H$.
\end{proof}

\appendix 
\section*{Appendix}

\section{Proof of \cref{lemma:uniformlydensematching}} \label{appendix:regularity}

In this Appendix we give the proof of \cref{lemma:uniformlydensematching}.
We start by introducing hypergraph regularity,
which we  use  to find uniformly dense tuples (\cref{proposition:uniformlydensematchingviaregularity}),
which in turn allows us to prove the lemma.

We follow the exposition by Allen, B\"ottcher, Cooley and Mycroft~\cite{ABCM2017} and their `Regular Slice Lemma',
which is a convenient formulation of the hypergraph regularity machinery, previously developed by many authors~\cite{Gowers2007, RodlSkokan2004, NagleRodlSchacht2006}.

Recall that  the down-closure $\mathcal{H}$ of a $k$-graph $H$ was defined in Section~\ref{section:cliquefactors}.
We say  a set $S$ of $k+1$ vertices is \emph{supported on} $\mathcal{H}$ if every $k$-set $S'\subseteq S$  is an edge of $\mathcal{H}$.
Given  an $i$-partite $(i-1)$-graph $H_{i-1}$, observe that $K_{i}(H_{i-1})$ is an $i$-partite $i$-graph whose edges are all the $i$-sets supported on $H_{i-1}$.

\begin{definition}
	Let $i\geq 2$, let $H_i$ be an $i$-partite $i$-graph and let $H_{i-1}$ be an $i$-partite $(i-1)$-graph on a common vertex set $V$.
	The \emph{relative density of $H_i$ with respect to $H_{i-1}$} is defined to be $0$ if $|K(H_{i-1})|=0$ and
	$$d(H_{i}|H_{i-1})=\dfrac{|K_i(H_{i-1})\cap H_i|}{|K_i({H_{i-1})|}}$$
	otherwise.
	More generally, if $\mathbf{Q}=(Q_1,\dots, Q_s$) is a collection of not necessarily disjoint subgraphs of $H_{i-1}$, we define $K_i(\mathbf{Q})=\bigcup_{j=1}^s K_i(Q_j)$.
	When this set is non empty, we define
	$$d(H_i|\mathbf{Q})=\dfrac{|K_i(\mathbf{Q})\cap H_i|}{|K_i(\mathbf{Q})|},$$
	and we set $d(H_i|\mathbf{Q}) = 0$ otherwise.
\end{definition}

\begin{definition}
	Let $i\geq 2$, let $H_i$ be an $i$-partite $i$-graph and let $H_{i-1}$ be an $i$-partite $(i-1)$-graph on a common vertex set $V$.
	We say that \emph{$H_i$ is $(d_i,\varepsilon,s)$-regular} with respect to $H_{i-1}$ if $d(H_i|\mathbf{Q})=d_i\pm\varepsilon$ for every $s$-set $\mathbf{Q}$ of subgraphs of $H_{i-1}$ such that $|K_i(\mathbf{Q})|>\varepsilon|K_i(H_{i-1})|$.
	In the case that $s=1$, it will be referred simply as \emph{$(d_i,\varepsilon)$-regularity}.
	
	An $i$-graph $G$ whose vertex set contains that of $H_{i-1}$  is called \emph{$(d_i,\varepsilon,s)$-regular with respect to $H_{i-1}$}
	if the $i$-partite subgraph of $G$ induced by the vertex classes of $H_{i-1}$ is $(d_i,\varepsilon,s)$-regular with respect to $H_{i-1}$.
	We call $d_i$ the \emph{relative density of $G$ with respect to $H_{i-1}$}.
\end{definition}

Recall that when $\mathcal{J}$ is a $k$-complex, $\mathcal{J}_i=\{e \in \mathcal{J}: |e|=i\}$.
If $\mathcal{J}$ is $\mathcal{P}$-partite for some partition $\mathcal{P}$, we refer to $\mathcal{P}$ as the \emph{ground partition} of $\mathcal{J}$ and to the parts of $\mathcal{P}$ as the \emph{clusters} of $\mathcal{J}$.
When $\mathcal{J}$ is $r$-partite with vertex classes $V_1,\dots,V_r$ and $A\subseteq [r]$,
we denote $V_A=\bigcup_{i \in A}V_i$.

\begin{definition}
	Let $\mathcal{H}$ be an $r$-partite $(k-1)$-complex on vertex classes $V_1,\dots, V_r$, where $r \geq k-1\geq 2$.
	We say that $\mathcal{H}$ is \emph{$(d_{k-1},\dots,d_2,\varepsilon)$-regular} if
	for any $2\leq i \leq k-1$ and any $A\in{{[r]} \choose i}$, the induced subgraph $\mathcal{H}_i[V_A]$ is $(d_i,\varepsilon)$-regular with respect to $\mathcal{H}_{i-1}[V_A]$.
	For a $(k-1)$-tuple $\mathbf{d}=(d_k,\dots,d_2)$ we write \emph{$(\mathbf{d},\varepsilon)$-regular} to mean $(d_k,\dots,d_2,\varepsilon)$-regular.
\end{definition}
\begin{definition}
	We say that a $(k-1)$-complex $\mathcal{J}$ is \emph{$(t_0,t_1,\varepsilon)$-equitable} if
	\begin{enumerate}
		\item $\mathcal{J}$ has a ground partition into $t$ parts of equal size, where $t_0\leq t\leq t_1$.
		\item There exists a density vector $\mathbf{d}=(d_{k-1},\dots,d_2)$ such that for each $2\leq i\leq k-1$ we have $d_i\geq \frac{1}{t_1}$, $\frac{1}{d_i}\in\mathbb{N}$ and the $(k-1)$-complex $\mathcal{J}$ is $(\mathbf{d},\varepsilon)$-regular.
	\end{enumerate}
	We write \emph{$(\cdot, \cdot, \varepsilon)$-equitable} if we do not need to state $t_0,t_1$ explicitly.	\end{definition}

\begin{definition}
	Let $\mathcal{J}$ be a partite $(k-1)$-complex, let $X$ be 
	a $k$-set of clusters of $\mathcal{J}$,
	and let $G$ be a $k$-graph on $V(\mathcal{J})$. We say that $G$ is \emph{$(\varepsilon_k,s)$-regular with respect to $X$ in $\mathcal{J}$} if there exists some $d$ such tat $G$ is $(d,\varepsilon_k,s)$-regular with respect to $\mathcal{J}_{k-1}[V_X]$.
	We also write $d_\mathcal{J}^*(X)$ for the relative density of $G$ with respect to $\mathcal{J}_{k-1}[V_X]$,
	or simply $d^*(X)$ when $\mathcal{J}$ is clear from the context.
	
	Define the \emph{weighted reduced $k$-graph $R_\mathcal{J}(G)$} as the complete weighted $k$-graph whose vertices are the clusters of $\mathcal{J}$ and where each edge $X$ has weight $d^*(X)$. When $\mathcal{J}$ is clear from the context we often simply write $R(G)$ instead of $R_\mathcal{J}(G)$.
\end{definition}

\begin{definition}
	Given $\varepsilon,\varepsilon_k>0, s,t_0,t_1\in\mathbb{N}$, a $k$-graph $G$ and a $(k-1)$-complex $\mathcal{J}$ on $V(G)$, we call $\mathcal{J}$ a \emph{$(t_0,t_1,\varepsilon,\varepsilon_k,s)$-regular slice} for $G$ if $\mathcal{J}$ is $(t_0,t_1,\varepsilon)$-equitable and $G$ is $(\varepsilon_k,s)$-regular with respect to all but at most $\varepsilon_k{t \choose k}$ of the $k$-sets of clusters in $\mathcal{J}$, where $t$ is the number of clusters of $\mathcal{J}$.
\end{definition}

\begin{definition}
	Let $G$ be a $k$-graph on $n$ vertices. Given a set $S\subseteq V(G)$ of size $k-1$,
	the \emph{relative codegree $\overline{\deg}(S;G)$ of $S$ with respect to $G$} is defined to be
	\[\overline{\deg}(S;G)=\dfrac{|\{e \in G: S\subseteq e\}|}{n-k+1}.\]
	We extend the definition to weighted graphs $G$ by replacing $|\{e \in G: S\subseteq e\}|$ with the sum of the weights of the edges containing $S$.
	Moreover, if $\mathcal{S}$ is a collection of $(k-1)$-sets in $V(G)$ then $\overline{\deg}(\mathcal{S};G)$ is defined to be the average of $\overline{\deg}(S;G)$ over all sets $S \in \mathcal{S}$.
\end{definition}

\begin{lemma}[Regular Slice Lemma~{\cite[Lemma 10]{ABCM2017}}] \label{lemma:regularslices}
	Let $k\geq 3$ be a fixed integer. For all $t_0\in\mathbb N$, $\varepsilon_k>0$ and all functions $s:\mathbb{N}\to\mathbb{N}$ and $\varepsilon:\mathbb{N}\to(0,1]$, there are integers $t_1$ and $n_0$ such that the following holds for all $n\geq n_0$ which are divisible by $t_1!$.
	
	Let $V$ be a set of size $n$ and suppose that $G$ is a $k$-graph on $V$. Then there exists a $(k-1)$-complex $\mathcal{J}$ on $V$ which is a $(t_0,t_1,\varepsilon(t_1),\varepsilon_k,s(t_1))$-regular slice for $G$, and for each set $Y$ of $k-1$ clusters of $\mathcal{J}$, we have
	\begin{equation*}
		\left|\overline{\deg}(Y;R(G))-\overline{\deg}(\mathcal{J}_{k-1}[V_Y];G)\right|<\varepsilon_k.
	\end{equation*}
\end{lemma}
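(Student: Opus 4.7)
The plan is to reduce this to an application of the strong hypergraph regularity lemma of R\"odl and Schacht (or the Gowers version). With functions $s$ and $\varepsilon$ and parameters $t_0,\varepsilon_k$ given, I would first choose auxiliary parameters, in particular a function $\varepsilon'\colon \mathbb N\to (0,1]$ decreasing fast enough with $t$, together with a target lower density vector, and feed these into the strong regularity lemma applied to $G$. Its output is a partition $V=V_1\sqcup \cdots\sqcup V_t$ with $t_0\le t\le t_1$ and a family of partite complexes of every lower order~$i<k$, with all relative densities bounded below by $1/t_1$ and every density being the reciprocal of an integer; together these give a $(k-1)$-complex $\mathcal J$ which by construction is $(t_0,t_1,\varepsilon(t_1))$-equitable. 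The strong regularity lemma also guarantees that $G$ is $(\varepsilon_k,s(t_1))$-regular with respect to all but at most $\varepsilon_k \binom{t}{k}$ of the $k$-sets of clusters, giving the regular slice property. A standard divisibility/cleaning trick (removing a few vertices and redistributing, using that $n$ is divisible by $t_1!$) can be used if needed to keep clusters of equal size.

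The main new content is the codegree comparison
\[ \bigl|\overline{\deg}(Y;R(G))-\overline{\deg}(\mathcal J_{k-1}[V_Y];G)\bigr|<\varepsilon_k. \]
To prove this, fix a set $Y$ of $k-1$ clusters. The left-hand quantity $\overline{\deg}(Y;R(G))$ is, by definition of $R(G)$, the average over the $t-k+1$ clusters $V_j\notin Y$ of the relative density $d^{*}(Y\cup\{V_j\})$, i.e.~the density of $G$ relative to $\mathcal J_{k-1}[V_{Y\cup\{V_j\}}]$ on regular $k$-tuples, and an arbitrary value in $[0,1]$ on the $(\varepsilon_k\binom{t}{k}/\binom{t-1}{k-1})$-small exceptional fraction. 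On the other hand $\overline{\deg}(\mathcal J_{k-1}[V_Y];G)$ is the average over $S\in \mathcal J_{k-1}[V_Y]$ of $|N_G(S)|/(n-k+1)$, which by double counting equals
\[ \frac{1}{|\mathcal J_{k-1}[V_Y]|(n-k+1)}\sum_{j\colon V_j\notin Y}\bigl|\{e\in G : e\cap V_j\ne\emptyset,\ e\setminus V_j\in \mathcal J_{k-1}[V_Y]\}\bigr|. \]

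The comparison of these two averages is then a straightforward counting argument. In each summand corresponding to a regular tuple $X=Y\cup\{V_j\}$, regularity of $G$ with respect to $\mathcal J_{k-1}[V_X]$ yields that $|G\cap K_k(\mathcal J_{k-1}[V_X])|=(d^{*}(X)\pm\varepsilon_k)|K_k(\mathcal J_{k-1}[V_X])|$, and the equitability of $\mathcal J$ together with the dense counting lemma for regular complexes gives $|K_k(\mathcal J_{k-1}[V_X])|=(1\pm o(1))|\mathcal J_{k-1}[V_Y]|\cdot |V_j|$. Summing over $j$ and dividing by $|\mathcal J_{k-1}[V_Y]|(n-k+1)$ produces, up to error $O(\varepsilon_k)$, the same quantity as $\overline{\deg}(Y;R(G))$; the irregular and exceptional tuples contribute only $O(\varepsilon_k)$ overall, since $\varepsilon_k$ of the $k$-sets of clusters are irregular and their contribution is trivially bounded by $1$. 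Adjusting the initial constants so that all error terms are absorbed by $\varepsilon_k$ completes the proof.

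The main obstacle I expect is the bookkeeping in this last double-counting step, especially making sure that the dense counting lemma for $(\mathbf d,\varepsilon(t_1))$-regular complexes can be invoked with the right quantitative dependence (this is what forces the choice of $\varepsilon'$ as a function of $t$, and why we must also specify $s$ and $\varepsilon$ as functions rather than constants when invoking the strong regularity lemma). Everything else is either immediate from the strong regularity lemma or a standard supersaturation/counting argument inside regular complexes.
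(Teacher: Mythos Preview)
The paper does not prove this lemma at all: it is quoted verbatim as \cite[Lemma 10]{ABCM2017} and used as a black box, so there is no in-paper proof to compare against.

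As for your sketch itself, the overall shape is right (derive everything from the strong hypergraph regularity lemma plus a dense counting lemma), but you are glossing over the step that gives the lemma its name. The output of Rödl--Schacht/Gowers regularity is not a single $(k-1)$-complex $\mathcal J$ but a whole \emph{family of polyads}: for each $2\le i\le k-1$ and each $i$-set of clusters one gets a partition of the complete $i$-partite $i$-graph into many cells. A \emph{slice} is obtained by choosing, for every such $i$-set of clusters, exactly one cell from that partition; the content of \cite[Lemma 10]{ABCM2017} is that a \emph{random} choice of cells succeeds with positive probability, simultaneously giving equitability, few irregular $k$-sets, and the codegree comparison. Your sentence ``together these give a $(k-1)$-complex $\mathcal J$ which by construction is $(t_0,t_1,\varepsilon(t_1))$-equitable'' hides precisely this random selection argument, and without it you have not explained why a single $\mathcal J$ with all the listed properties exists. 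Once the random-slice framework is in place, your double-counting argument for the codegree inequality is essentially the right one (and is indeed how it is handled in \cite{ABCM2017}).
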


\begin{definition}
	Let $G$ be a $k$-graph and let $\mathcal{J}$ be a $(t_0,t_1,\varepsilon,\varepsilon_k,s)$-regular slice for $G$.
	We define two $k$-graphs $R_d(G)$ and $I(G)$ whose vertex set is the clusters of $\mathcal{J}$.
	For $d>0$, the \emph{$d$-reduced $k$-graph $R_d(G)$} is the $k$-graph whose edges are all the $k$-sets $X$ of clusters of $\mathcal{J}$ such that $G$ is $(\varepsilon_k,s)$-regular with respect to $X$ and $d^*(X)\geq d$.
	The \emph{irregularity $k$-graph} $I(G)$ is the $k$-graph whose edges are all the $k$-sets $X$ of clusters of $\mathcal{J}$ such that $G$ is not $(\varepsilon_k, s)$-regular.
\end{definition}

Note that $R_d(G)$ and $I(G)$ are edge-disjoint $k$-graphs on the same vertex set.
The next lemma states that for regular slices $\mathcal{J}$ from the Regular Slice Lemma,
the codegree of $G$ is almost preserved in $R_d(G) \cup I(G)$.
\begin{lemma}[{\cite[Lemma 12]{ABCM2017}}]
	Let $G$ be a $k$-graph and let $\mathcal{J}$ be a $(t_0,t_1,\varepsilon,\varepsilon_k,s)$-regular slice for $G$.        For any set $Y$ of $k-1$ clusters of $\mathcal{J}$ we have
	\begin{equation}
		\overline{\deg}(Y;R_d(G) \cup I(G))\geq \overline{\deg}(Y;R(G))-d. \label{equation:regularslice-dreduced+irregular}
	\end{equation}
\end{lemma}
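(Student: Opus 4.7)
The plan is to unravel both sides directly from the definitions and decompose the sum underlying $\overline{\deg}(Y;R(G))$ according to the regularity status of each $k$-set of clusters extending $Y$. Let $t$ be the number of clusters of $\mathcal{J}$, and let $\mathcal{X}_Y$ be the collection of all $k$-sets of clusters containing $Y$, so $|\mathcal{X}_Y|=t-k+1$. Since $R(G)$ is the complete weighted $k$-graph on clusters with edge-weights $d^*(X)$, we have
\[
\overline{\deg}(Y;R(G))=\frac{1}{t-k+1}\sum_{X\in\mathcal{X}_Y}d^*(X),
\]
while $R_d(G)\cup I(G)$ is unweighted, giving
\[
\overline{\deg}(Y;R_d(G)\cup I(G))=\frac{|(R_d(G)\cup I(G))\cap\mathcal{X}_Y|}{t-k+1}.
\]

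Next, partition $\mathcal{X}_Y$ into three classes: the $k$-sets $X$ where $G$ fails to be $(\varepsilon_k,s)$-regular with respect to $X$ in $\mathcal{J}$ (so $X\in I(G)$), the $k$-sets where $G$ is $(\varepsilon_k,s)$-regular with $d^*(X)\geq d$ (so $X\in R_d(G)$), and the $k$-sets where $G$ is regular with $d^*(X)<d$ (the leftover). The first two classes together are exactly $(R_d(G)\cup I(G))\cap\mathcal{X}_Y$. For any $X$ in the first two classes I would bound $d^*(X)\leq 1$, while for $X$ in the leftover class I would use $d^*(X)<d$. Summing across $\mathcal{X}_Y$ yields
\[
\sum_{X\in\mathcal{X}_Y}d^*(X)\;\leq\;|(R_d(G)\cup I(G))\cap\mathcal{X}_Y|\;+\;d(t-k+1),
\]
and dividing by $t-k+1$ gives the claimed inequality.

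The lemma is essentially a bookkeeping exercise, so there is no substantive obstacle. The only subtlety is to recognise that $d^*(X)\in[0,1]$ is defined regardless of whether $G$ is regular with respect to $X$, so that the weight carried by irregular $k$-sets in $R(G)$ can be safely dominated by the indicator of $I(G)$; this identifies $R_d(G)\cup I(G)$ as exactly the set of $k$-sets whose weight in $R(G)$ can be replaced by $1$ with no error, leaving the low-density regular $k$-sets as the only source of slack, uniformly controlled by $d$.
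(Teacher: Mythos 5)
Your proof is correct and is exactly the standard bookkeeping argument; the paper itself imports this lemma from [ABCM2017, Lemma 12] without proof, and the argument there is the same trichotomy (irregular / regular with $d^*(X)\ge d$ / regular with $d^*(X)<d$) together with the bounds $d^*(X)\le 1$ and $d^*(X)<d$ respectively. Your observation that $d^*(X)$ is defined (and lies in $[0,1]$) even for irregular $k$-sets is the only point requiring care, and you handle it correctly.
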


Note that if $\mathcal{J}$ is a partite $(d_{k-1}, \dotsc, d_2, \varepsilon)$-regular $(k-1)$-complex and $G$ is supported on $\mathcal{J}_{k-1}$,
then $\mathcal{J} \cup G$ is a $k$-complex which is partite with the same partition of $\mathcal{J}$.
If for all $k$-sets $X$ of $\mathcal{J}$,
$G$ is $(\varepsilon_k,s)$-regular with respect to $X$ and $d_\mathcal{J}^*(X) \geq d_k$,
then we say that $\mathcal{J} \cup G$ is a \emph{$(d_k, d_{k-1}, \dotsc, d_2, \varepsilon, \varepsilon_k, s)$-regular complex}.

\begin{lemma}[Counting Lemma~{\cite[Lemma 27]{ABCM2017}}] \label{lemma:counting}
	Let $k,r,s,m$ be positive integers, and let $d_2, \dotsc, d_k, \varepsilon, \varepsilon_k, \beta$ be positive constants such that $1/d_i \in \mathbb{N}$ for any $2 \leq i \leq k-1$ and
	\[ \frac{1}{m} \ll \frac{1}{s}, \varepsilon \ll \varepsilon_k, d_2, \dotsc, d_{k-1} \qquad \text{and} \qquad \varepsilon_k \ll \beta, \frac{1}{r}. \]
	Let $\clique{k}{r}$ be a $k$-uniform clique with vertex set $[r]$,
	let $\mathcal{J}$ be an $r$-partite $(k-1)$-complex with $r$ clusters $V_1, \dotsc, V_r$, each of size $m$,
	and also let $G$ be a $k$-graph which is supported on $\mathcal{J}_{k-1}$.
	Suppose $G \cup \mathcal{J}$ is a $(d_k, d_{k-1}, \dotsc, d_2, \varepsilon, \varepsilon_k, s)$-regular complex.
	Then the number of copies of $\clique{k}{r}$ in $G$ such that $i \in V_i$ for all $1 \leq i \leq r$ is at least \begin{equation}
		(1 - \beta) \left( \prod_{i=2}^{k} d_i^{\binom{r}{i}} \right) m^r.
		\label{equation:regularslice-counting}
	\end{equation}
\end{lemma}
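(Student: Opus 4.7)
The plan is to prove the counting lemma by induction on the uniformity $k$, combining a dense counting argument for the regular $(k-1)$-complex $\mathcal{J}$ with a controlled extension argument that uses the $s$-regularity of $G$ at the top level.

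For the base case $k = 2$, the complex $\mathcal{J}$ collapses to its trivial $1$-uniform structure (just the partition into clusters $V_1, \dotsc, V_r$), and the statement is precisely the classical dense graph counting lemma: in an $r$-partite graph with parts of size $m$ that is $(d_2, \varepsilon)$-regular on every pair, the number of partite copies of $K_r$ is $(1 \pm \beta) d_2^{\binom{r}{2}} m^r$. This is standard and can be proved by iteratively applying the regularity condition to count embeddings one vertex at a time, deleting the atypical tuples as we go.

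For the inductive step, assume the statement holds for $(k-1)$-complexes. Apply the inductive hypothesis to $\mathcal{J}$ viewed as a $(d_{k-1}, \dotsc, d_2, \varepsilon)$-regular $(k-1)$-complex to obtain that the number of partite copies of $\clique{k-1}{r}$ in $\mathcal{J}_{k-1}$ with $i \in V_i$ is
\[
(1 \pm \beta/3) \prod_{i=2}^{k-1} d_i^{\binom{r}{i}} m^r.
\]
For each such $K_r^{(k-1)}$-copy $C$, extending it to a $K_r^{(k)}$-copy in $G$ requires that all $\binom{r}{k}$ many $k$-subsets of $C$ lie in $G$. The $(d_k, \varepsilon_k, s)$-regularity of $G$ with respect to each $\mathcal{J}_{k-1}[V_A]$ (for $A \in \binom{[r]}{k}$) gives that the relative density of $G$ is $d_k \pm \varepsilon_k$ on sufficiently large $(k-1)$-subgraphs. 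The goal is to show that $(1 \pm \beta/3) d_k^{\binom{r}{k}}$ fraction of the $K_r^{(k-1)}$-copies lift to $K_r^{(k)}$-copies in $G$.

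To handle the correlations between the $\binom{r}{k}$ many indicator events ``this $k$-subset lies in $G$'', we proceed by peeling off the $k$-subsets one at a time in a fixed order $A_1, \dotsc, A_N$ with $N = \binom{r}{k}$. At stage $j$, we condition on the structure already imposed by the first $j-1$ subsets and need to show the density of $G$ on the $\mathcal{J}_{k-1}[V_{A_j}]$-fragments of the remaining copies is still close to $d_k$. This is where the $s$-regularity with $s \geq N$ is crucial: it upgrades single-subgraph density control to joint control over unions of up to $s$ subgraphs of $\mathcal{J}_{k-1}$, ensuring that the density of $G$ on the conditioned structure remains $d_k \pm O(\varepsilon_k)$. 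Combining all $N$ factors yields the target $d_k^{\binom{r}{k}}$ multiplicative factor, with overall multiplicative error $\beta$ coming from $\varepsilon_k \ll \beta, 1/r$.

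The main obstacle is precisely the correlation issue above, since the $\binom{r}{k}$ events share common vertices and common $(k-1)$-subsets of $C$. Without $s$-regularity one could only obtain the expected density for a ``typical'' single $k$-subset, but would lose control when imposing all $\binom{r}{k}$ constraints simultaneously. The $s$-regularity framework, together with the hierarchy $1/s, \varepsilon \ll \varepsilon_k \ll 1/r$, is exactly calibrated to absorb the $\binom{r}{k}$ rounds of conditioning and propagate the density bound through all of them. A useful bookkeeping trick here is to carry, along with the $K_r^{(k-1)}$-count from induction, a running collection of ``typical'' copies of partial cliques $K_{r'}^{(k)}$ in $G$ for growing $r'$; after the final step, this count provides the lower bound~\eqref{equation:regularslice-counting}.
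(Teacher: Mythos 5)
First, a point of reference: the paper does not prove this lemma at all --- it is quoted verbatim as Lemma~27 of \cite{ABCM2017}, where it is in turn derived from the (deep) counting lemma accompanying the strong hypergraph regularity lemma. So there is no in-paper proof to compare against, and your sketch must stand on its own.

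Your base case and the first half of the inductive step are fine: applying the inductive hypothesis to $\mathcal{J}$ does give $(1\pm\beta/3)\prod_{i=2}^{k-1}d_i^{\binom{r}{i}}m^r$ supported $r$-sets, i.e.\ partite copies of $\clique{k-1}{r}$ in $\mathcal{J}_{k-1}$. The genuine gap is in the peeling step. At stage $j$ you wish to apply the $(d_k,\varepsilon_k,s)$-regularity of $G$ with respect to $\mathcal{J}_{k-1}[V_{A_j}]$ to the family $S_j$ of copies surviving the first $j-1$ constraints. But that regularity statement only controls the \emph{unweighted} density of $G$ on sets of the form $K_k(\mathbf{Q})$ for a collection $\mathbf{Q}$ of at most $s$ subgraphs of $\mathcal{J}_{k-1}[V_{A_j}]$. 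What the peeling argument actually needs is control of a \emph{weighted} density: each candidate $k$-set $e\subseteq V_{A_j}$ must be counted with multiplicity equal to its number of extensions to a member of $S_j$, and that multiplicity depends on $G$ and $\mathcal{J}$ restricted to the other index sets, varying wildly from one $e$ to another. The projection $\{C|_{A_j}: C\in S_j\}$ is not (even approximately) of the form $K_k(\mathbf{Q})$ with the correct multiplicities, and no choice of at most $s$ subgraphs $Q_1,\dotsc,Q_s\subseteq\mathcal{J}_{k-1}[V_{A_j}]$ encodes it, since the $Q_i$ live one uniformity level below $G$ while the conditioning event is a constraint on $G$ itself. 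Asserting that $s$-regularity ``absorbs the $\binom{r}{k}$ rounds of conditioning'' is precisely the step where all known proofs expend their effort --- via repeated Cauchy--Schwarz/extension lemmas (Nagle--R\"odl--Schacht, Gowers) or via reduction to a dense counting lemma through the regular approximation lemma (the route taken in \cite{ABCM2017}). As written, the argument does not close.

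A secondary caveat: the hierarchy in the statement has $1/m\ll 1/s$, so $s$ may be as small as $1$; an argument that requires $s\geq\binom{r}{k}$, as yours does, is using a hypothesis the lemma does not grant.
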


\begin{lemma}[Regular Restriction Lemma~{\cite[Lemma 28]{ABCM2017}}] \label{lemma:regularrestriction}
	Suppose $k, r, s, m$ are positive integers and $\zeta, \varepsilon, \varepsilon_k, d_2, \dotsc, d_k > 0$ are such that
	\[ \frac{1}{m} \ll \varepsilon \ll \varepsilon_k, d_2, \dotsc, d_{k-1} \qquad \text{and} \qquad \varepsilon_k \ll \zeta, \frac{1}{k}. \]
	Let $\mathcal{J}$ be an $r$-partite $(k-1)$-complex whose vertex classes $V_1, \dotsc, V_r$ each have size $m$,
	let $G$ be a $k$-graph supported on $\mathcal{J}_{k-1}$,
	and suppose that $G \cup \mathcal{J}$ is a $(d_k, d_{k-1}, \dotsc, d_2, \varepsilon, \varepsilon_k, s)$-regular complex.
	Let $V'_i \subseteq V_i$ be such that $|V'_i| = |V'_j| \geq \zeta m$ for all $1 \leq i< j \leq r$,
	and let $\mathcal{J}' \cup G' = (\mathcal{J} \cup G)[V'_1 \cup \dotsb \cup V'_r]$ be the induced subcomplex.
	Then $\mathcal{J}' \cup G'$ is a $(d_k, d_{k-1}, \dotsc, d_2, \sqrt{\varepsilon}, \sqrt{\varepsilon_k}, s)$-regular complex.
\end{lemma}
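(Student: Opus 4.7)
My plan is to verify each of the regularity conditions in the definition of a $(d_k, d_{k-1}, \dotsc, d_2, \sqrt{\varepsilon}, \sqrt{\varepsilon_k}, s)$-regular complex for $\mathcal{J}' \cup G'$ level by level, proceeding by induction on $i$ from $i = 2$ up to $i = k$. At level $i$, for each $A \in \binom{[r]}{i}$, the task is to show that $\mathcal{J}'_i[V'_A]$ is $(d_i, \sqrt{\varepsilon})$-regular with respect to $\mathcal{J}'_{i-1}[V'_A]$, and at the top level $i = k$ that $G'$ is $(d_k, \sqrt{\varepsilon_k}, s)$-regular with respect to $\mathcal{J}'_{k-1}[V'_{[r]}]$.

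The core idea is that regularity is a statement about densities on ``large enough'' substructures, and restrictions to subsets $V'_a$ of relative size at least $\zeta$ preserve the largeness condition up to a factor controlled by $\zeta$, which is enough to invoke the original regularity thanks to the parameter hierarchy. Concretely, suppose $\mathbf{Q}'$ is a subgraph (or an $s$-tuple of subgraphs, for the top level) of $\mathcal{J}'_{i-1}[V'_A]$ satisfying $|K_i(\mathbf{Q}')| > \sqrt{\varepsilon} \, |K_i(\mathcal{J}'_{i-1}[V'_A])|$. I would regard $\mathbf{Q}'$ as a subgraph of $\mathcal{J}_{i-1}[V_A]$ and show that it still satisfies $|K_i(\mathbf{Q}')| > \varepsilon \, |K_i(\mathcal{J}_{i-1}[V_A])|$, so that the original $(d_i, \varepsilon)$-regularity (or the original $(d_k, \varepsilon_k, s)$-regularity at the top level) applies and gives $d(\mathcal{J}_i \mid \mathbf{Q}') = d_i \pm \varepsilon$. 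Since $\mathbf{Q}'$ is supported inside $V'_A$, this relative density agrees with $d(\mathcal{J}'_i \mid \mathbf{Q}')$, and $d_i \pm \varepsilon \subseteq d_i \pm \sqrt{\varepsilon}$ gives what I need.

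The key technical step is to estimate $|K_i(\mathcal{J}_{i-1}[V'_A])|$ in terms of $|K_i(\mathcal{J}_{i-1}[V_A])|$. I expect, via an inductive application of \cref{lemma:counting} to the subcomplex induced on $V'_A$ (whose $(d_j, \sqrt{\varepsilon})$-regularity at levels $j < i$ is precisely the inductive hypothesis), to obtain
\[
|K_i(\mathcal{J}_{i-1}[V'_A])| = (1 \pm \beta_i) \left( \prod_{j=2}^{i-1} d_j^{\binom{i}{j}} \right) \prod_{a \in A} |V'_a|,
\]
for some error $\beta_i$ much smaller than $\sqrt{\varepsilon}$, and analogously for $|K_i(\mathcal{J}_{i-1}[V_A])|$ with $|V_a|$ in place of $|V'_a|$. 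Combined with $|V'_a| \geq \zeta m$, this yields a lower bound of the shape $|K_i(\mathbf{Q}')| \geq \tfrac{1}{2} \sqrt{\varepsilon} \, \zeta^i \, |K_i(\mathcal{J}_{i-1}[V_A])|$, and the hierarchy $\varepsilon \ll \varepsilon_k \ll \zeta, 1/k$ together with $\varepsilon \ll d_2, \dotsc, d_{k-1}$ comfortably gives $\tfrac{1}{2} \sqrt{\varepsilon} \, \zeta^i > \varepsilon$, producing the desired $\varepsilon$-largeness in the original complex.

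The main obstacle is organising this counting argument cleanly, because the Counting Lemma is being applied inside the restricted complex $\mathcal{J}[V'_{[r]}]$, whose regularity is exactly what I am trying to prove. This apparent circularity is resolved by the induction on $i$: at each step, $(d_j, \sqrt{\varepsilon})$-regularity of the restriction has already been established at all lower levels $j < i$, and since $\sqrt{\varepsilon} \ll d_j$ and $\zeta m$ is still large, this regularity at lower levels is sufficient to invoke \cref{lemma:counting} to produce the count at level $i$. A final bookkeeping detail is to check that the error terms $\beta_i$ compound favourably through the induction and remain below $\sqrt{\varepsilon}$ at the top level, which is guaranteed by taking $m$ large enough in the ground hierarchy.
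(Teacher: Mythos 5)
The paper does not prove this lemma at all: it is imported verbatim as Lemma~28 of the cited work of Allen, B\"ottcher, Cooley and Mycroft, so there is no in-paper proof to compare against. Judged on its own, your argument is the standard proof of the restriction lemma and is essentially sound: restrict, take a witness $\mathbf{Q}'$ of $\sqrt{\varepsilon}$-largeness in the restricted complex, show it is still $\varepsilon$-large in the original complex via a counting lemma plus $|V_a'|\ge\zeta m$, and then quote the original regularity; the induction on levels correctly breaks the apparent circularity.

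Two points deserve care. First, the tool you invoke, \cref{lemma:counting}, is stated only for the top level (counting $r$-cliques of the $k$-graph $G$ supported on $\mathcal{J}_{k-1}$) and only as a \emph{lower} bound; what your argument actually needs at each level $i\le k$ is the two-sided ``dense counting lemma'' for equitable $(k-1)$-complexes (Lemma~6 in the same reference), giving $|K_i(\mathcal{J}_{i-1}[V_A])|=(1\pm\beta)\bigl(\prod_{j=2}^{i-1}d_j^{\binom{i}{j}}\bigr)m^i$ and likewise for the restriction. Second, the two-sidedness is not mere bookkeeping: if you instead used the trivial upper bound $m^i$ for the denominator, the comparison would read $|K_i(\mathbf{Q}')|\ge \tfrac12\sqrt{\varepsilon_k}\,\zeta^k\prod_j d_j^{\binom{k}{j}}|K_k(\mathcal{J}_{k-1})|$ at the top level, and the stated hierarchy does \emph{not} give $\varepsilon_k\ll d_2,\dotsc,d_{k-1}$, so you could not conclude. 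It is precisely the cancellation of $\prod_j d_j^{\binom{i}{j}}$ between the restricted and unrestricted counts — which your displayed formula implicitly uses — that reduces the final inequality to $\tfrac12\zeta^k>\sqrt{\varepsilon_k}$, which $\varepsilon_k\ll\zeta,1/k$ does supply. Make that dependence explicit and cite the correct (two-sided, all-levels) counting lemma, and the proof is complete.
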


\begin{proposition} \label{proposition:uniformlydensematchingviaregularity}
	Suppose $k,r,m,s, t_0, t_1$ are positive integers and $\zeta, \rho, \varepsilon, \varepsilon_k, d_2, \dotsc, d_k > 0$ are such that
	\[ \frac{1}{m} \ll \frac{1}{s}, \varepsilon \ll \varepsilon_k, d_2, \dotsc, d_{k-1}, \qquad \varepsilon_k \ll \zeta, \frac{1}{k}, \frac{1}{r}, \qquad \text{and} \qquad \rho \ll d_2, \dotsc, d_k, \frac{1}{k}, \frac{1}{r} \]
	Let $G$ be a $k$-graph,
	and let $\mathcal{J}$ be a  $(t_0,t_1,\varepsilon,\varepsilon_k,s)$-regular slice for $G$.
	Let $V_1, \dotsc, V_r$ be $r$ clusters of $\mathcal{J}$, each of size $m$, which form a copy of $\clique{k}{r}$ in $R_{d_k}(G)$.
	Then $H[V_1, \dotsc, V_r]$ is $(\zeta, \rho)$-uniformly dense, where $H = K_r(G)$.
\end{proposition}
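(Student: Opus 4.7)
[Proof proposal for Proposition~\ref{proposition:uniformlydensematchingviaregularity}]
The plan is a direct application of the Regular Restriction Lemma followed by the Counting Lemma. Fix any $W_i \subseteq V_i$ with $|W_i| = h \geq \zeta m$ for all $i \in [r]$. Since the edges of $H = K_r(G)$ are precisely the $r$-cliques of $G$, we have that $e_H(W_1,\dotsc,W_r)$ equals the number of $r$-cliques of $G$ having exactly one vertex in each $W_i$. Our goal is to show this is at least $\rho h^r$.

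First, observe that because $V_1,\dotsc,V_r$ form a copy of $\clique{k}{r}$ in $R_{d_k}(G)$, every $k$-subset $X \subseteq [r]$ satisfies that $G$ is $(\varepsilon_k, s)$-regular with respect to $\mathcal{J}_{k-1}[V_X]$ with relative density $d^\ast_\mathcal{J}(X) \geq d_k$. Together with the fact that $\mathcal{J}$ is $(d_{k-1},\dotsc,d_2,\varepsilon)$-equitable, this means that for each $k$-subset $X$, the induced $k$-partite $k$-complex $(\mathcal{J}\cup G)[V_X]$ is a $(d^\ast_\mathcal{J}(X), d_{k-1}, \dotsc, d_2, \varepsilon, \varepsilon_k, s)$-regular complex. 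Passing to a spanning subgraph $G' \subseteq G$ which keeps only a $d_k/d^\ast_\mathcal{J}(X)$ fraction of the edges on each regular $k$-tuple does not affect lower bounds on the clique count, so we may as well assume the $r$-partite $k$-complex $\mathcal{J}\cup G$ restricted to $V_1,\dotsc,V_r$ is $(d_k, d_{k-1},\dotsc,d_2,\varepsilon,\varepsilon_k,s)$-regular.

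Next, I will apply Lemma~\ref{lemma:regularrestriction} to the restriction to $W_1,\dotsc,W_r$. Since $|W_i|=h \geq \zeta m$, the hypotheses are met (with $\zeta$ in the role of $\zeta$), and the conclusion is that $(\mathcal{J}\cup G)[W_1 \cup \dotsb \cup W_r]$ is a $(d_k, d_{k-1},\dotsc,d_2,\sqrt{\varepsilon},\sqrt{\varepsilon_k},s)$-regular complex. Now I will apply the Counting Lemma (Lemma~\ref{lemma:counting}) to this restricted complex, with the role of $m$ played by $h$ and a chosen error parameter $\beta \leq 1/2$, to conclude that the number of copies of $\clique{k}{r}$ in $G[W_1,\dotsc,W_r]$ with one vertex in each $W_i$ is at least
\[ (1-\beta)\left(\prod_{i=2}^k d_i^{\binom{r}{i}}\right) h^r. \]
Under our hypothesis $\rho \ll d_2,\dotsc,d_k, 1/k, 1/r$, we have $(1-\beta)\prod_{i=2}^k d_i^{\binom{r}{i}} \geq \rho$, which concludes the proof.

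The main obstacle is bookkeeping rather than a genuine mathematical difficulty: one must check that the hierarchy $1/m \ll 1/s, \varepsilon \ll \varepsilon_k, d_2, \dotsc, d_{k-1}$ and $\varepsilon_k \ll \zeta, 1/k, 1/r$ assumed in the Proposition implies the corresponding hierarchies needed by both the Regular Restriction Lemma (where $\varepsilon, \varepsilon_k$ are replaced by $\sqrt{\varepsilon}, \sqrt{\varepsilon_k}$ after restriction) and the Counting Lemma applied with parameter $h$ instead of $m$. Since $h \geq \zeta m$ and $\zeta$ is bounded away from $0$ in the chosen hierarchy, $1/h \ll 1/s$ still holds, and the remaining inequalities propagate routinely.
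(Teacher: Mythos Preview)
Your proposal is correct and follows essentially the same route as the paper: apply the Regular Restriction Lemma (\cref{lemma:regularrestriction}) to pass from $V_1,\dotsc,V_r$ to $W_1,\dotsc,W_r$, then apply the Counting Lemma (\cref{lemma:counting}) with $\beta=1/2$ to count partite copies of $\clique{k}{r}$, and finish using $\rho \ll d_2,\dotsc,d_k,1/k,1/r$.

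One remark: your subsampling step (passing to $G'\subseteq G$ to force the relative density down to exactly $d_k$) is unnecessary. By the paper's definition, $\mathcal{J}\cup G$ being a $(d_k,d_{k-1},\dotsc,d_2,\varepsilon,\varepsilon_k,s)$-regular complex only requires $d^\ast_{\mathcal{J}}(X)\geq d_k$ for each $k$-set $X$, not equality; so the hypothesis that $V_1,\dotsc,V_r$ form a $\clique{k}{r}$ in $R_{d_k}(G)$ already gives this directly, and the paper's proof skips the detour.
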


\begin{proof}
	We need to show that $H[V_1, \dotsc, V_r]$ is $(\zeta, \rho)$-uniformly dense,
	which is equivalent to the following:
	for each $m' \geq \zeta m$,
	and each choice of $W_1, \dotsc, W_r \subseteq V(G)$
	such that for all $1 \leq i \leq r$,
	$|W_i| = m'$ and $W_i \subseteq V_i$,
	we need
	\begin{equation}
		e_H(W_1, \dotsc, W_r) \geq \rho (m')^r.
		\label{equation:uniformlydensematchingviaregularity-key}
	\end{equation}
	From now on, fix $m' \geq \zeta m$ and sets $W_1, \dotsc, W_r$ as before, and let us check \eqref{equation:uniformlydensematchingviaregularity-key} holds for these choices.
	
	Let $\mathcal{J}^r, G^r$ be the restrictions of $\mathcal{J}$ and $G$, respectively, to the clusters $V_1, \dotsc, V_r$.
	Thus $\mathcal{J}^r$ is an $r$-partite $(k-1)$-complex with $r$ vertex classes $V_1, \dotsc, V_r$, each of size $m$,
	and $G^r$ is supported on $\mathcal{J}^r_{k-1}$.
	Since the clusters $V_1, \dotsc, V_r$ form a clique $\clique{k}{r}$ in $R_{d_k}(G)$,
	we deduce that $\mathcal{J}^r \cup G^r$ is a $(d_k, d_{k-1}, \dotsc, d_2, \varepsilon, \varepsilon_k, s)$-regular complex.
	Now, let $\mathcal{J}' \cup G' = (\mathcal{J}^r \cup G^r)[W_1 \cup \dotsb \cup W_r]$ be the induced subcomplex.
	Since $m' \geq \zeta m$, by \cref{lemma:regularrestriction} we deduce that $\mathcal{J}' \cup G'$ is a $(d_k, d_{k-1}, \dotsc, d_2, \sqrt{\varepsilon}, \sqrt{\varepsilon_k}, s)$-regular complex.
	
	Now, apply \cref{lemma:counting} with
	\begin{center}
		\begin{tabular}{c|c|c|c|c|c|c|c|c|c|c|c}
			object/parameter & $k$ & $s$ & $r$ & $m'$ & $d_i$ & $\sqrt{\varepsilon}$ & $\sqrt{\varepsilon_k}$ & $1/2$ & $\mathcal{J}'$ & $G'$ & $W_i$ \\
			\hline
			playing the role of	& $k$ & $s$ & $r$ & $m$ & $d_i$ & $\varepsilon$ & $\varepsilon_k$ & $\beta$ & $\mathcal{J}$ & $G$ & $V_i$ \\
		\end{tabular}
	\end{center}
	for all $1 \leq i \leq r$.
	This yields that the number of copies of $K^k_r$ (with vertices labelled by $1, \dotsc, r$) in $G'$ such that $i \in W_i$ for all $1 \leq i \leq r$ is at least \[ \left[ \frac{1}{2} \prod_{i=2}^{k} d_i^{\binom{r}{i}} \right] (m')^r \geq \rho (m')^r, \]
	where the last inequality follows from the choice of $\rho$.
	Since $G' \subseteq G$,
	and each $K^k_r$ of this form corresponds to an edge in $H[W_1, \dotsc, W_r]$,
	we deduce that $e_H(W_1, \dotsc, W_r) \geq \rho (m')^r$.
	This shows \eqref{equation:uniformlydensematchingviaregularity-key} and finishes the proof.
\end{proof}

With these tools at hand we can finally show \cref{lemma:uniformlydensematching},
using the sketch outlined at the beginning of Section~\ref{section:uniformlydense}.

\begin{proof}[Proof of \cref{lemma:uniformlydensematching}]
	We begin by setting the necessary constants and functions.
	We are given $\alpha, \delta, \varepsilon, k, r$.
	Let $n_1, a_1$ correspond to the values of $n, a$ obtained from \cref{lemma:Hfactoravoiding} with inputs $\alpha/2, r, \delta/2$ in place of $\gamma, r, b$. Define $d_k = \alpha/8$ and $t_0 = n_1$, and let $\varepsilon_1$ be small enough so that Proposition~\ref{proposition:uniformlydensematchingviaregularity} holds with $k, r,\varepsilon_1$ and $\varepsilon$ in place of $k,r,\varepsilon_k$ and $\zeta$, respectively.
	Let $\varepsilon_k = \min \{ \varepsilon_1, a_1/2, \alpha/8 \}$.
	Choose functions $s: \mathbb{N} \to \mathbb{N}$ and $\varepsilon: \mathbb{N} \to (0, 1]$ such that, for all $t \geq 0$,
	Proposition~\ref{proposition:uniformlydensematchingviaregularity} is valid with $s(t), \varepsilon(t), \varepsilon_k$ in place of $s, \varepsilon, \varepsilon_k$, and $1/t$ in place of $d_2, \dotsc, d_{k-1}$.
	Let $t_1$ and $n_0$ be given by Lemma~\ref{lemma:regularslices} with the already-chosen $k, t_0, \varepsilon_k, s, \varepsilon$.
	Let $\rho$ be such that Proposition~\ref{proposition:uniformlydensematchingviaregularity} is true with $1/t_1$ playing the role of $d_2, \dotsc, d_{k-1}$ and $k, r, d_k$ as chosen before;
	and let $d = \rho$.
	Let $m$ be such that Proposition~\ref{proposition:uniformlydensematchingviaregularity} is true with $s(t_1), \varepsilon(t_1)$ in place of $s, \varepsilon$.
	Finally, let $n \geq \max \{ n_0 + t_1!, 2 \delta^{-1} t_1!, 8 \alpha^{-1} t_1!, m t_1 + t_1! \}$.
	
	Now, suppose $H$ is as in the statement of the lemma.
	Our task is to show that there exists $H'\subseteq H$ with $|V(H')|\ge (1-\delta)n$ such that $K_{r}(H')$ admits an $(\varepsilon,d)$-uniformly dense matching.
	
	Remove an arbitrary set $V_0 \subseteq V(H)$ of size less than $t_1!$
	to obtain a subgraph $H'' \subseteq H$ whose number of vertices is divisible by $t_1!$.
	Note that the number of vertices of $H''$ is at least $n_0$,
	and that $\delta(H'') \geq \delta(H) - \alpha n/8$.
	This allows us to apply \cref{lemma:regularslices} to $H''$ to obtain a $(k-1)$-complex $\mathcal{J}$ on $V(H'')$ such that $\mathcal{J}$ is a $(t_0, t_1, \varepsilon(t_1), \varepsilon, s(t_1))$-regular slice for $H''$, and for each set $Y$ of $k-1$ clusters of $\mathcal{J}$, we have
	\begin{equation}
		\left|\overline{\deg}(Y;R(H''))-\overline{\deg}(\mathcal{J}_{k-1}[V_Y];H'')\right|<\varepsilon_k
		\label{equation:uniformlydensematching-slice}
	\end{equation}
	
	Let $R = R_d(H'')$ and $I = I(H'')$ be the $d_k$-reduced $k$-graph and the irregularity $k$-graph of $H''$ with respect to $\mathcal{J}$, respectively.
	If $Y$ an arbitrary $(k-1)$-tuple of clusters of $\mathcal{J}$, then
	\begin{align*}
		\overline{\deg}(Y;R \cup I)
		& \overset{\eqref{equation:regularslice-dreduced+irregular}}{\geq}
		\overline{\deg}(Y;R(H''))-d_k
		\overset{\eqref{equation:uniformlydensematching-slice}}{\geq}
		\overline{\deg}(\mathcal{J}_{k-1}[V_Y];H'') - \varepsilon_k - d_k \\
		& \overset{d_k, \varepsilon_k \leq \alpha/8}{\geq}
		\overline{\deg}(\mathcal{J}_{k-1}[V_Y];H'') - \frac{\alpha}{4}
		\geq 1-\frac{1}{\binom{r-1}{k-1}+\binom{r-2}{k-2}}+\frac{\alpha}{2} \\
		& \overset{\text{Lemma \ref{lemma:cliquetilingbounds}}}{\geq} t(\clique{k}{r}) +\frac{\alpha}{2}
	\end{align*}
	where the second-to-last inequality follows from $\delta(H'') \geq \delta(H) - \alpha n/8$.
	If $t$ is the number of clusters of $\mathcal{J}$,
	this implies that $\delta(R \cup I) \geq \left( t(\clique{k}{r}) +{\alpha}/{2} \right) \binom{t}{k}$. Moreover, since $\mathcal{J}$ is a regular slice, we have $|I| \leq \varepsilon_k t^k \leq a_1 t^k$.
	By the choice of $a_1$ and $t \geq t_0 \geq n_1$,
	\cref{lemma:Hfactoravoiding} implies that $R$ contains a $\clique{k}{r}$-matching $\mathcal{K}$ covering all but at most $\delta t / 2$ vertices of $R$.
	The choice of $n$ and $t \leq t_1$ implies that each cluster of $\mathcal{J}$ has size at least $m$.
	Since $\rho = d$ and by the choice of $\varepsilon$, Proposition~\ref{proposition:uniformlydensematchingviaregularity} implies that each copy of $\clique{k}{r}$ in $\mathcal{K}$ yields an $r$-tuple of equal-sized clusters $(V_1, \dotsc, V_r)$ which form an $(\varepsilon, d)$-uniformly dense tuple in $K_r(H'')$ (and thus also in $K_r(H)$).
	Thus, if $V(\mathcal{K})$ is the set of vertices in clusters covered by $\mathcal{K}$,
	we deduce that $H[V(\mathcal{K})]$ has a $(\varepsilon, d)$-uniformly dense matching.
	
	To finish the proof, set $H' = H[V(\mathcal{K})]$. Then we have
	\[ |V(H')| \geq |V(H'')| - \frac{\delta}{2} t m = n - |V_0| - \frac{\delta}{2} |V(H')| \geq (1 - \delta)n, \]
	where the first inequality follows because $\mathcal{K}$ covers all but $\delta t / 2$ clusters of $\mathcal{J}$ and each cluster has size $m = |V(H')|/t$,
	and later we used that $|V_0| < t_1! \leq \delta n / 2$.
\end{proof}

\bibliographystyle{amsplain}


\begin{aicauthors}
\begin{authorinfo}[mps]
  Mat\'ias Pavez-Sign\'e\\
  University of Warwick\\
  Coventry, CV4 7AL, UK\\
  matias.pavez-signe\imageat{}warwick\imagedot{}ac\imagedot{}uk \\
  \url{https://sites.google.com/view/mpsigne/home}
\end{authorinfo}
\begin{authorinfo}[nsm]
  Nicol\'as Sanhueza-Matamala\\
  Universidad de Concepci\'on\\
  Concepci\'on, Chile\\
  nsanhuezam\imageat{}udec\imagedot{}cl \\
  \url{https://sanhueza.net/nicolas/}
\end{authorinfo}
\begin{authorinfo}[ms]
  Maya Stein\\
  Universidad de Chile\\
  Santiago, Chile\\
  mstein\imageat{}dim\imagedot{}uchile\imagedot{}cl\\
  \url{https://www.dim.uchile.cl/~mstein/}
\end{authorinfo}
\end{aicauthors}

\end{document}